\newtheorem{proposition}{Proposition}
\newtheorem{lemma}[proposition]{Lemma}
\newtheorem{theorem}[proposition]{Theorem}
\theoremstyle{definition}
\newtheorem{definition}[proposition]{Definition}
\newtheorem*{remark*}{Remark}
\def\RR{{\mathbb R}}
\def\Rn{{\mathbb R^n}}
\def\ZZ{{\mathbb Z}}
\def\NN{{\mathbb N}}
\def\CC{{\mathbb C}}
\def\HH{{\mathbb{H}}}
\def\DD{{\mathbb{D}}}
\def\g{{\gamma}}
\def\G{{\Gamma}}
\DeclareMathOperator{\tr}{Tr}
\DeclareMathOperator{\D}{D}
\DeclareMathOperator{\PP}{P}
\DeclareMathOperator{\psl}{PSL_2(\mathbb{R})}
\DeclareMathOperator{\F}{\Phi}
\DeclareMathOperator{\area}{Area}
\DeclareMathOperator{\spec}{Spec}
\DeclareMathOperator{\End}{End}
\DeclareMathOperator{\Res}{Res}
\newcommand{\cl}{\operatorname{Cl}}
\newcommand{\li}{\operatorname{Li}}
\newcommand{\gl}{\operatorname{GL}}
\newcommand{\slinear}{\operatorname{SL}}
\newcommand{\diam}{\operatorname{Diam}}
\newcommand{\so}{\operatorname{SO}}
\newcommand{\spin}{\operatorname{Spin}}
\newcommand{\Id}{\operatorname{Id}}
\begin{document}
\title[Trace formula for Dirac operators on degenerating hyperbolic surfaces]{The Selberg trace formula for spin Dirac operators on degenerating hyperbolic surfaces}
\author{Rare\c s Stan}
\address{Institute of Mathematics of the Romanian Academy\\ 
Bucharest\\ 
Romania}
\email{rares.stan@imar.ro}

\begin{abstract}
We investigate the spectrum of the spin Dirac operator on families of hyperbolic surfaces where a set of disjoint simple geodesics shrink to $0$, under the hypothesis that the spin structure is non-trivial along each pinched geodesic. The main tool is a trace formula for the Dirac operator on finite area hyperbolic surfaces. We derive a version of Huber's theorem and a non-standard small-time heat trace asymptotic expansion for hyperbolic surfaces with cusps. As a corollary we find a simultaneous Weyl law for the eigenvalues of the Dirac operator which is \emph{uniform} in the degenerating parameter. The main result is the convergence of the Selberg zeta function associated to the Dirac operator on such families of hyperbolic surfaces. A central role is played by a $\{ \pm 1 \}$-valued class function $\varepsilon$ determined by the spin structure.
\end{abstract}

\date{\today}
\maketitle

\section{Introduction}
This paper is concerned with the behaviour of the spectrum of the spin Dirac operator on compact hyperbolic surfaces during a deformation process towards a hyperbolic surface with cusps.
B\" ar \cite{Bar} discovered that the Dirac operator on a finite volume surface has discrete spectrum if and only if the spin structure is non-trivial along all cusps. Under this assumption, he showed uniform estimates for the number of eigenvalues in a fixed interval $(-\xi,\xi)$ for a sequence of compact surfaces degenerating to a surface with cusps. 
Moroianu \cite{Moroianu} extended B\" ar's result to a large class of open manifolds equipped with so-called cusp metrics, a general class of metrics which includes complete hyperbolic manifolds of finite volume. Under the hypothesis that the Dirac operator on the section of a cusp is invertible, he showed that the spectrum is discrete and, notably, obeys the Weyl law.

Here we first prove a Selberg trace formula for finite volume hyperbolic surfaces endowed with a spin structure obeying B\" ar's non-triviality condition, extending the formula obtained by Bolte and Stiepan for the compact case \cite{BolteStiepanSelbergForDirac}. This formula extends Huber's isospectrality result to finite volume hyperbolic surfaces. It is natural to ask what happens to the Selberg trace formula for such a family of degenerating surfaces. We found that the geometric side converges in the following sense: the primitive conjugacy classes corresponding to the pinched geodesics contribute a $\log(2)$, and the rest converges to the corresponding geometric side in the trace formula on the finite volume limit surface. This has several consequences. First, we find that the small-time heat trace has a non-standard asymptotic term $k\log(2)(4\pi t)^{-1/2}$, where $k$ is the number of cusps. Based on this asymptotics, we prove that the Weyl law is \emph{uniform} for families of compact hyperbolic surfaces degenerating towards a surface with cusps, provided the spin structure is non-trivial along the pinched geodesics, thereby refining \cite[Theorem 2]{Bar}. We then prove the holomorphic extension to $\CC$ and a symmetry with respect to the point $1/2$ for the Selberg zeta function $Z_{\varepsilon}$ of a finite volume surface, defined in $(\ref{formulaZeta})$, with respect to a certain class function $\varepsilon: \G \longrightarrow \{ \pm 1 \}$ induced by the spin structure.

The behaviour under degeneracy of the Selberg zeta function corresponding to the scalar Laplacian(i.e., the hypothetical case $\varepsilon \equiv 1$) was investigated by Schulze \cite{SchulzeZetaLaplacian}, who proved the so-called Wolpert's conjecture in the half-plane $\Re(s)>1/2$. That convergence cannot have any counterpart in the rest of the complex plane because of the appearance of continuous spectrum in the limit. 

The main result in this paper is that, after rescaling by a constant factor depending in a simple way on the lengths of the pinched geodesics, our Selberg zeta function converges on the whole complex plane. The limit of this convergence is an exponential multiplied by the Selberg zeta function associated to the non-compact limit surface with cusps.

In the rest of this introduction we explain in more detail the notions and results above.
\subsection{Degenerating surfaces}
Throughout the paper, an oriented complete hyperbolic surface is a quotient of the Poincar\' e half plane, $\HH:=\left( \{ (x,y)\in \RR^2: y>0 \}, \frac{dx^2+dy^2}{y^2}\right)$, through a discrete subgroup of isometries $\G \subset \psl$. This quotient inherits a complete Riemmanian metric of sectional curvature $-1$, which in turn determines the group $\G$ up to conjugation in $\psl$. To define a \emph{pinching process} consider $\kappa$ disjoint, simple, closed geodesics $\eta_j$, for $1\leq j\leq 3g-3$, on a compact hyperbolic surface $M$ of genus $g$. Complete this to a maximal system of such geodesics, denoted $\eta_j$, for $\kappa+1\leq j\leq3g-3$. Now decompose the surface into pairs of pants, cutting along each $\eta_j$. Take a continuous family of complete hyperbolic metrics $\{g_t\}_{\vert_{t\in (0,1]}}$ on $M$ whose restriction to $M\setminus \cup_{j=1}^{\kappa} \eta_j$ extends continuously to a family $\{g_t\}_{\vert_{t\in [0,1]}}$. We shall denote by $l_t(c)$ the length of the curve $c$ with respect to the metric $g_t$.
\begin{definition}\label{pinchingProcess}
We say that $M$ goes through a \emph{pinching process} along $\{ \eta_j: 1\leq j\leq \kappa \}$ if:
\begin{itemize}
\item $g_1$ equals the initial metric on $M$ and $g_0$ is complete hyperbolic on $M\setminus \cup_{j=1}^{\kappa} \eta_j$;
\item $l_t(\eta_j)\rightarrow 0$ as $t\rightarrow 0$ for $1\leq j\leq \kappa$;
\item $l_t(\eta_j)\rightarrow \alpha_j>0$ as $t\rightarrow 0$ for $\kappa+1\leq j\leq 3g-3$;
\item $\theta_j(t)$ have a limit when $t\rightarrow 0$, for $1\leq j\leq 3g-3$, where $\theta_j(t)$ is the twisting parameter in the Fenchel-Nielsen coordinates on the Teichm\" uller space.
\end{itemize}
\end{definition}
At the limit $t=0$, we obtain a non-compact surface $M\setminus \cup_{j=1}^{\kappa} \eta_j$ with $2\kappa$ cusps, endowed with a complete metric $g_0$. Clearly, the area remains constant throughout this process. 
Near a geodesic $\eta_j$, the metric $g_t$ can be written as: 
\begin{align*}
g_t = \frac{dx^2}{x^2+l_t^2(\eta_j)} + \left( x^2+ l_t^2(\eta_j) \right)d\theta^2;
&&
(x,\theta)\in \left(-\frac{l_t(\eta_j)}{\sinh \frac{l_t(\eta_j)}{2} },\frac{l_t(\eta_j)}{\sinh \frac{l_t(\eta_j)}{2} }\right) \times [0,2\pi].
\end{align*}
Such degeneration processes were described by Ji \cite{lizhen}, B\" ar \cite{Bar}, and Schulze \cite{SchulzeZetaLaplacian}.

\subsection{Trace formula for the Dirac operator}
The main tool we develop is a trace formula for the spin Dirac operator on finite area hyperbolic surfaces (Theorem \ref{selbergTraceFormula}). To obtain it, we use the kernel of a convolution operator which has the same eigenspinors as the squared Dirac, similar with Selberg's original idea. The main difference between this formula and the one for the Laplace operator is the appearance of a \emph{class function} $\varepsilon$. By this we mean a function defined on the fundamental group $\G$ of our surface, constant along conjugacy classes. This function also appeared in the work of D'Hocker and Phong \cite{dhockerPhong}, and Sarnak \cite{SarnkDeterminantsOfLaplacians}. It was used when computing determinants of Laplacians in relation to the Selberg zeta function. The definition of $\varepsilon$ in \cite{dhockerPhong} is rather algebraic. In the second section we prove that this definition can be reinterpreted in the usual, geometric fashion, as arising from the action of elements in $\G$ on spinors. The information about the spin structure will be encapsulated in this class function. We will show that B\" ar's non-triviality condition is equivalent to the fact that $\varepsilon$ takes value $-1$ on each primitive parabolic element. The negative sign produces the alternating harmonic series, hence the $\log (2)$ term in Theorem \ref{selbergTraceFormula}. There is an advantage in regarding the non-triviality condition using $\varepsilon$. If a surface goes through a pinching process along some geodesics, and $\varepsilon$ takes value $-1$ on each one of those geodesics, then the trace formula converges to the trace formula of the limit surface (Theorem \ref{limitSelbergFormula}).
Several applications can be obtained from here.

\subsection{Huber's Theorem for the Dirac operator}
Huber's Theorem \cite{Buser} allows us to tackle isospectrality related problems. It states that two compact Riemann surfaces of genus greater than two have the same Laplace spectrum (i.e.~are \emph{isospectral}) if and only if they have the same \emph{length spectrum} (i.e.~the sequence of lengths of oriented, closed geodesics). If one wants to extend this result to the case of finite area Riemann surfaces one encounters a problem. The spectrum of $\Delta$ becomes continuous and not much is known about the embedded eigenvalues. We address this issue by using the Dirac operator instead, for a non-trivial spin structure. Two surfaces are called \emph{Dirac isospectral} if they have the same Dirac spectrum. In general, two Dirac isospectral manifolds are neither isometric nor homeomorphic. Various examples can be found in the literature, we mention B\" ar \cite{Bar2}, and Ammann and B\" ar \cite{Ammann}. We will not pursue this direction here. Our aim is to prove the following:
\begin{theorem}\label{isospectralityTheorem}
Two finite area hyperbolic surfaces, endowed with non-trivial spin structures are Dirac isospectral if and only if they have the same length spectrum and their respective class functions $\varepsilon$ coincide on the primitive length spectrum.
\end{theorem}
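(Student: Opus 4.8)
The plan is to read everything off the Selberg trace formula for the Dirac operator (Theorem \ref{selbergTraceFormula}). For an admissible even test function $h$ with Fourier transform $g$ it has the shape
\[
\sum_{j}h(\lambda_{j})\;=\;\big(\text{identity term, proportional to }\area(M)\big)\;+\;\sum_{\{\gamma\}\ \mathrm{hyp.}}\varepsilon(\gamma)\,\frac{l(\gamma_{0})}{2\sinh\!\big(l(\gamma)/2\big)}\,g\!\big(l(\gamma)\big)\;+\;\big(k\log 2\cdot g(0)+\text{remaining cusp terms in }h\big),
\]
the spectral side running over the (discrete, by B\"ar) Dirac spectrum $\{\lambda_{j}\}$, with $k$ the number of cusps and $\gamma_{0}$ the primitive element under $\gamma$. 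Two structural facts make the \emph{primitive} length data together with $\varepsilon$ govern the whole hyperbolic side: $l(\gamma_{0}^{m})=m\,l(\gamma_{0})$, and $\varepsilon(\gamma_{0}^{m})=\varepsilon(\gamma_{0})^{m}$, the latter being immediate from the geometric description of $\varepsilon$ in Section 2, since the spin holonomy around $\gamma_{0}^{m}$ is the $m$-th power of that around $\gamma_{0}$. Since the identity term depends only on $h$ and $\area(M)$ and the cusp terms only on $h$ and $k$, and since — by the small-time heat expansion proved earlier in the paper — the $t^{-1}$ coefficient of $\tr e^{-tD^{2}}$ recovers $\area(M)$ while its non-standard coefficient $k\log 2\,(4\pi t)^{-1/2}$ recovers $k$, the Dirac spectrum determines $\area(M)$, $k$, and the \emph{$\varepsilon$-weighted length spectrum}, namely the atoms of the signed measure $\sum_{\{\gamma\}\,\mathrm{hyp.}}\varepsilon(\gamma)\,l(\gamma_{0})\,(2\sinh(l(\gamma)/2))^{-1}\delta_{l(\gamma)}$. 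The theorem thus reduces to showing that these data are equivalent to the length spectrum together with the values of $\varepsilon$ on primitive closed geodesics.

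For ($\Leftarrow$), assume the two surfaces admit a length-preserving bijection of their primitive closed geodesics under which the $\varepsilon$-values agree. By the two multiplicativity facts, the hyperbolic sides of the trace formulas coincide term by term for every $h$, and the Euler products $\prod_{\{\gamma_{0}\}}\prod_{n\ge 0}\big(1-\varepsilon(\gamma_{0})e^{-(s+n)l(\gamma_{0})}\big)$ give $Z_{\varepsilon_{1}}=Z_{\varepsilon_{2}}$ for the zeta functions of $(\ref{formulaZeta})$. By the holomorphic continuation and functional equation announced in the introduction, the multiplicities of the zeros of $Z_{\varepsilon}$ at the ``trivial'' points are affine in $\area(M)$ and $k$, so $\area(M)$ and $k$ agree too; hence the identity and cusp terms agree, the two spectral sides coincide for all $h$, and the surfaces are Dirac isospectral. (Equivalently, the non-trivial zeros of the common $Z_{\varepsilon}$, with multiplicity, are the Dirac spectrum.)

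For ($\Rightarrow$), by the reduction above it remains to reconstruct the length spectrum and the $\varepsilon$-values on primitive geodesics from the $\varepsilon$-weighted length spectrum. Write $a(\ell)$, $b(\ell)$ for the numbers of primitive closed geodesics of length $\ell$ with $\varepsilon=+1$, $-1$. Grouping by length, the atom at $L$ equals $(2\sinh(L/2))^{-1}\sum_{p\ell=L}\tfrac{L}{p}\,c_{p}(\ell)$, where the sum is over $p\ge 1$ with $\ell=L/p\in\mathrm{PLS}$ and $c_{p}(\ell)=a(\ell)-b(\ell)$ for $p$ odd, $c_{p}(\ell)=a(\ell)+b(\ell)$ for $p$ even — so a primitive class transmits its $\varepsilon$-signed count through its odd powers and its unsigned count through its even powers. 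When no primitive length is a rational multiple of another, the atom at $\ell$ equals $\ell\,(2\sinh(\ell/2))^{-1}(a(\ell)-b(\ell))$ and the atom at $2\ell$ equals $\ell\,(2\sinh\ell)^{-1}(a(\ell)+b(\ell))$, so one reads off $(a(\ell),b(\ell))$ length by length; in the general case the finitely many primitive lengths in each rational-ratio family must be treated simultaneously, using that $a(\ell),b(\ell)$ are non-negative integers with $a(\ell)+b(\ell)\equiv a(\ell)-b(\ell)\pmod 2$ to single out the unique admissible solution of the resulting linear system. This yields the refined data and closes the equivalence.

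The step I expect to be the main obstacle is precisely this last reconstruction. In the classical untwisted Huber theorem the geometric side of the trace formula is already the length spectrum; here it only sees the signed combinations $\varepsilon(\gamma_{0})^{m}$, so separating the $\varepsilon=+1$ from the $\varepsilon=-1$ geodesics of a fixed length requires playing a primitive class against its square (odd versus even powers), and disentangling these contributions when several primitive lengths lie in rational ratios — where they overlap at common multiples and the naive equations acquire non-integer coefficients — is the delicate part; it is there that the integrality of the geodesic counts must be brought in. One also has to be sure that the recovery of $\area(M)$ and $k$ from the non-standard $\log 2$ heat-trace term is clean enough to strip off the non-hyperbolic part of the formula; the rest is a routine use of the trace formula and of the analytic properties of $Z_{\varepsilon}$ established earlier in the paper.
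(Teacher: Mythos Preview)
Your overall strategy—read everything off the trace formula—is exactly the paper's, but the execution differs in both directions.

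For $(\Leftarrow)$ the paper is more direct than your zeta-function detour: it applies the heat test $v_T(x)=(4\pi T)^{-1/2}e^{-x^2/4T}$ and sends $T\to\infty$. Since the hyperbolic sides agree by hypothesis, the difference of the two trace formulae reads
\[
0=\sum_j e^{-T\lambda_j^M}-\sum_j e^{-T\lambda_j^N}-\frac{\area M-\area N}{4\pi}\,I(T)+\frac{(k_M-k_N)\log 2}{\sqrt{4\pi T}},
\]
and the large-$T$ expansion $I(T)=(\pi T)^{-1/2}+\tfrac{1}{6}(\pi^3/T^3)^{1/2}+O(T^{-5/2})$ forces $\dim\ker$, $\area$, and $k$ to match in one stroke; the eigenvalues are then peeled off one by one. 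Note also that the trivial zeros of $Z_\varepsilon$ only see $\area(M)$; to extract $k$ your route would need the $2^{(1-2s)k}$ factor in the functional equation, so it is not quite as self-contained as you suggest.

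For $(\Rightarrow)$ the paper again uses the heat kernel—now $T\to 0$—to match area and $k$ first, and then chooses $v$ \emph{compactly supported at a single length}, proceeding inductively from the shortest geodesic upward rather than inverting the signed measure globally. This localisation is the main simplification over your approach: at each new length $L$ only the primitives of length exactly $L$ contribute anything not already known, so one never has to handle an entire rational-ratio family simultaneously. That said, your worry is legitimate and applies to the paper too: even in the inductive scheme the fresh contribution at $L$ is $L\,(a(L)-b(L))$, and one still needs the even power at $2L$ to see $a(L)+b(L)$. The paper asserts one recovers ``the same multiplicity and the same sign'' at each step without detailing this separation; your odd/even-power observation is exactly the missing ingredient, so your instinct is right even if neither your sketch nor the paper's fully spells out the case $a(\ell)=b(\ell)$.
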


\subsection{Heat trace asymptotics and uniform Weyl law}
The trace formula helps us deduce the small-time heat trace asymptotics. Interestingly, each cusp contributes with a non-standard $T^{-1/2}$ term as $T\rightarrow 0$.
\begin{theorem}\label{heatTraceAsymptotic}
Let $M=\G \setminus \HH$ be a complete hyperbolic surface of finite volume with $k$ cusps, where $\G$ is a discrete subgroup of $\psl$. Suppose that the spin structure is such that $\varepsilon(\g)=-1$ for every primitive parabolic element $\g\in\G$. If $\{\lambda_j\}_{j\in {\mathbb{N}}}$ are the eigenvalues of $\D^-\D^+$ (i.e. half the spectrum of $\D^2$) on $M$, then:
\begin{align*}
\sum_{j=0}^{\infty} e^{-T\lambda_j} \sim \frac{\area (M)}{4 \pi T} -\frac{k \log(2)}{\sqrt{4\pi T}} + \frac{\area (M) }{4\pi} \sum_{m=0}^{\infty}a_m T^m; && \text{as $T\searrow 0$},
\end{align*}
where:
\begin{align*}
a_m:=\int_{\RR}e^{-T\xi^2 }(-1)^m\xi^{2m}\left( \xi\coth(\pi \xi)-|\xi|\right) d\xi.
\end{align*}
\end{theorem}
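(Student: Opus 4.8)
The plan is to extract the heat trace asymptotics directly from the Selberg trace formula for $\D^-\D^+$ (Theorem \ref{selbergTraceFormula}). I would apply the trace formula to the family of test functions $h_T(\xi) = e^{-T(\xi^2 + c)}$ for the appropriate spectral shift $c$ determined by the normalization of $\D^-\D^+$ against $\D^2$; the Fourier transform $g_T$ is a Gaussian of width $\sqrt{T}$, which is exactly the regime where each term of the geometric side can be expanded in powers of $\sqrt{T}$. The left-hand side is $\sum_j e^{-T\lambda_j}$, and the right-hand side splits into (i) the identity contribution, (ii) the sum over hyperbolic conjugacy classes, (iii) the parabolic/cusp contribution, and possibly (iv) a contribution from the continuous spectrum / scattering determinant. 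I would treat each piece in turn.

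For the identity contribution, the Plancherel-type density is $\frac{\area(M)}{4\pi}\int_\RR h_T(\xi)\,\xi\coth(\pi\xi)\,d\xi$ (up to the usual constants appearing in Theorem \ref{selbergTraceFormula}); writing $\xi\coth(\pi\xi) = |\xi| + (\xi\coth(\pi\xi) - |\xi|)$ separates the leading Weyl term $\frac{\area(M)}{4\pi}\int_\RR e^{-T\xi^2}|\xi|\,d\xi = \frac{\area(M)}{4\pi T}\cdot\frac12 \cdot (\text{const})$, which after bookkeeping is $\frac{\area(M)}{4\pi T}$, from the remainder, which is smooth and rapidly decaying in $\xi$ and hence expands as $\frac{\area(M)}{4\pi}\sum_m a_m T^m$ by Taylor-expanding $e^{-T\xi^2}$ under the integral — this is precisely the stated series with coefficients $a_m$. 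The hyperbolic terms contribute $O(e^{-c/T})$ for some $c>0$: each appears with a factor $g_T(\ell) \sim e^{-\ell^2/4T}$ where $\ell$ ranges over the (positive) length spectrum, so the whole geodesic sum is exponentially small and contributes nothing to the asymptotic series.

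The crucial new term is the cusp contribution. From the proof of the trace formula (and the discussion in the introduction around the $\log(2)$), each cusp with $\varepsilon \equiv -1$ on its primitive parabolic produces an alternating series $\sum_{n\geq 1} (-1)^n (\cdots)$ whose regularized value involves $\log 2$; concretely I expect a term of the form $-\frac{k\log 2}{2\pi}\int_\RR h_T(\xi)\,d\xi$ together with lower-order pieces (a $\psi$-function / digamma term, a scattering-matrix term) that are $O(1)$ or smaller and can be absorbed. Since $\int_\RR e^{-T\xi^2}\,d\xi = \sqrt{\pi/T}$, this yields exactly $-\frac{k\log 2}{\sqrt{4\pi T}}$. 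I would then check that the remaining cusp/continuous-spectrum terms — the digamma contribution $\sim \int h_T(\xi)\frac{\Gamma'}{\Gamma}(\tfrac12+i\xi)\,d\xi$ and the determinant-of-scattering term — are either $O(1)$ absorbable into the $m=0$ term or genuinely lower order, using the known logarithmic growth of $\Gamma'/\Gamma$ and boundedness of the scattering phase.

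The main obstacle is controlling the cusp/continuous-spectrum terms precisely enough to be sure no extra half-integer powers of $T$ sneak in and that the coefficient of $T^{-1/2}$ is exactly $-k\log 2$ with no correction: the alternating-series regularization has to be done carefully, and one must verify that the digamma term, which naively could contribute a $T^{-1/2}\log T$ or a further $T^{-1/2}$ piece, in fact does not (its integral against the Gaussian is $O(\log(1/T))$, hence subsumed, not a new leading term). A secondary technical point is justifying termwise differentiation/integration in the Taylor expansion of $e^{-T\xi^2}$ against the integrable weight $\xi\coth(\pi\xi)-|\xi|$, which decays exponentially, so dominated convergence applies and the asymptotic (not necessarily convergent) series is legitimate in the sense of Poincaré.
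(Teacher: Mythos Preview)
Your overall strategy --- plug the Gaussian test function into Theorem~\ref{selbergTraceFormula}, split the identity term via $\xi\coth(\pi\xi)=|\xi|+(\xi\coth(\pi\xi)-|\xi|)$, and observe that the hyperbolic sum is $O(e^{-c/T})$ --- is exactly what the paper does. But you have significantly misremembered the shape of the trace formula you are invoking, and as a result you manufacture obstacles that do not exist.

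Look again at the right-hand side of Theorem~\ref{selbergTraceFormula}: it contains \emph{only} the identity integral, the hyperbolic sum, and the single term $-k\log(2)\,v(0)$. There is no continuous spectrum here (the hypothesis $\varepsilon=-1$ on primitive parabolics forces the Dirac spectrum to be discrete), hence no scattering determinant, no winding-number term, and no digamma contribution. The parabolic classes have already been summed in the proof of the trace formula, and the alternating series you allude to has already collapsed to the constant $-\log(2)$ there. So the entire cusp contribution is literally $-k\log(2)\,v_T(0)=-k\log(2)/\sqrt{4\pi T}$, with nothing left over to ``absorb''. Your ``main obstacle'' --- ruling out a $T^{-1/2}\log T$ or an extra $T^{-1/2}$ from a digamma integral --- is a phantom: those terms are simply absent from the formula you are applying. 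Likewise no spectral shift $c$ is needed; the eigenvalues satisfy $\lambda_j=\xi_j^2$ directly, so one takes $u_T(\xi)=e^{-T\xi^2}$ and $v_T(x)=(4\pi T)^{-1/2}e^{-x^2/4T}$, exactly the family \eqref{heatFamilyForSelberg}. Once you drop these imagined complications, your proof is complete and coincides with the paper's.
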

Note that this asymptotics is purely topological, meaning that it only depends on the number of cusps and the first Betti number but neither on the particular hyperbolic metric nor on the spin structure (provided it is non-trivial). Following the approach of Minakshisundaram and Pleijel \cite{Pleijel}, we will use this asymptotic together with Karamata's theorem to deduce the uniform Weyl law:

\begin{theorem}\label{uniformWeylLaw}
Let $M$ be a compact hyperbolic surface going through a pinching process along $\cup_{j=1}^k \eta_j $ (corresponding to the conjugacy class $[\mu_j]$, for $\mu_j \in \G$). Denote $N_t(\lambda)$ the number of $\D_t^2$-eigenvalues (counted with multiplicity) smaller than $\lambda$. If $\varepsilon(\mu_j)=-1$ for $1\leq j \leq k$ (i.e. the spin structure along $\mu_j$ is non-trivial), then:
\begin{align*}
\lim_{\lambda\rightarrow \infty} \frac{N_t(\lambda)}{\lambda} = \frac{\area (M)}{2\pi};
&&
\text{\emph{uniformly for} } t\in [0,1].
\end{align*}
\end{theorem}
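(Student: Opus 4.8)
The plan is to combine the non-standard heat trace asymptotics from Theorem \ref{heatTraceAsymptotic} with a Tauberian argument, while keeping careful track of uniformity in the pinching parameter $t$. The central observation is that Theorem \ref{heatTraceAsymptotic} gives, for each $t\in[0,1]$, the two leading terms of $\sum_j e^{-T\lambda_j(t)}$, namely $\tfrac{\area(M)}{4\pi T} - \tfrac{k\log 2}{\sqrt{4\pi T}} + O(1)$ as $T\searrow 0$, and -- crucially -- the coefficient $\tfrac{\area(M)}{4\pi}$ of the leading $T^{-1}$ term does not depend on $t$, since area is invariant under the pinching process. So first I would establish that the error terms in this expansion are uniform in $t$: this should follow from inspecting the proof of Theorem \ref{heatTraceAsymptotic}, since the geometric side of the trace formula splits into an identity contribution (giving the $\area/4\pi T$ and the power series), a parabolic/cusp contribution (giving the $-k\log2/\sqrt{4\pi T}$ term), and a hyperbolic contribution; by Theorem \ref{limitSelbergFormula} the hyperbolic part including the pinched geodesics stays bounded (the pinched classes contribute a convergent $\log 2$ in the limit), so the remainder is $O(1)$ with a constant independent of $t$.

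Next I would apply Karamata's Tauberian theorem. Writing $\theta_t(T) := \sum_j e^{-T\lambda_j(t)}$, we have $\theta_t(T) \sim \tfrac{\area(M)}{4\pi}\,T^{-1}$ as $T\searrow 0$, with the lower-order $T^{-1/2}$ and $O(1)$ terms being negligible at this order. Karamata's theorem then yields $N_t(\lambda) \sim \tfrac{\area(M)}{4\pi}\cdot\tfrac{1}{\Gamma(2)}\,\lambda = \tfrac{\area(M)}{4\pi}\lambda$ for the counting function of the $\D_t^-\D_t^+$-eigenvalues, hence $2\cdot\tfrac{\area(M)}{4\pi}\lambda = \tfrac{\area(M)}{2\pi}\lambda$ for the full $\D_t^2$-spectrum, recovering the stated constant. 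The standard Karamata argument, however, is not a priori uniform in a parameter, so the real work is to upgrade it to a uniform statement.

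The hard part will be precisely this uniformity in $t$. The cleanest route is to re-run the proof of Karamata's theorem (via approximating the indicator $\mathbf{1}_{[0,1]}$ by polynomials in $e^{-s}$, Weierstrass-style, following Minakshisundaram--Pleijel) and observe that the only inputs are: (i) the uniform asymptotic $\theta_t(T)\sim c\,T^{-1}$ with $c=\area(M)/4\pi$ the same for all $t$; and (ii) uniform control of $\theta_t(T)$ on compact $T$-intervals and as $T\to\infty$. Item (ii) for $T\to\infty$ reduces to a lower bound on the bottom of the spectrum, or more simply to the fact that $\theta_t(T)\to$ (dimension of the kernel, which by B\"ar is governed by $\varepsilon$ and stays controlled), together with monotonicity in $T$; item (i) is what the previous paragraph provides. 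One should also address that at $t=0$ the surface is non-compact with continuous spectrum, but Theorem \ref{heatTraceAsymptotic} is stated there for the discrete eigenvalues $\{\lambda_j\}$, and B\"ar/Moroianu's discreteness under the $\varepsilon=-1$ condition ensures $N_0(\lambda)$ is genuinely a counting function; the continuous part does not enter the heat trace on the discrete spectrum, so the argument applies verbatim at $t=0$.

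Finally, I would assemble these pieces: fix $\delta>0$; the polynomial approximation gives two-sided bounds $(c-\delta)\lambda \le N_t(\lambda) \le (c+\delta)\lambda$ for all $\lambda \ge \Lambda(\delta)$ and all $t\in[0,1]$ simultaneously, with $\Lambda(\delta)$ depending only on $\delta$ and on the uniform constants above, not on $t$. Letting $\lambda\to\infty$ and then $\delta\to 0$ gives $\lim_{\lambda\to\infty} N_t(\lambda)/\lambda = c = \area(M)/4\pi$ uniformly in $t$, which after the factor-of-two for passing from $\D_t^-\D_t^+$ to $\D_t^2$ is exactly the claimed $\area(M)/2\pi$. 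The only subtlety to flag is making sure the $T^{-1/2}$ term, while genuinely present, is absorbed into the error for the Tauberian step -- this is automatic since $T^{-1/2} = o(T^{-1})$, and its coefficient $-k\log 2/\sqrt{4\pi}$ is bounded uniformly in $t$ because $k$ is fixed along the pinching process.
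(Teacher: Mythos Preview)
Your approach is correct and matches the paper's: obtain $\lim_{T\to 0}T\,\theta_t(T)=\area(M)/2\pi$ uniformly in $t$ from the trace formula together with Theorem \ref{limitSelbergFormula}, and then run Karamata's argument with the uniformity carried through the polynomial approximation of the cut--off function. One small correction worth making: Theorem \ref{heatTraceAsymptotic} is stated only for the limit surface $t=0$; for $t>0$ the surfaces are compact, there are no parabolic classes, and the $T^{-1/2}$--size contribution comes instead from the pinched \emph{hyperbolic} classes, which by the estimates in the proof of Theorem \ref{limitSelbergFormula} are $O(T^{-1/2})$ uniformly in $t$ (not $O(1)$) --- but since $T^{-1/2}=o(T^{-1})$ this is exactly what the Tauberian step needs, as you yourself note in the final paragraph.
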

In terms of the eigenvalue counting function for $\D_t$, this result can be rephrased as:
\begin{align*}
N_{D_t}(-\xi,\xi) = \xi^2\frac{\area (M)}{2\pi}+ o(\xi^2),
\end{align*}
uniformly in $t\in [0,1]$.

Note that sharp estimates for the error term in the Weyl law are known (see e.g. Ivrii\cite{ivrii}). We will not address this aspect here.

\subsection{The convergence of the Selberg Zeta function}
For any $s\in \CC$ with $\Re (s)>1$ we define the Selberg Zeta function with respect to the class function $\varepsilon$ as:
\begin{align}\label{formulaZeta}
Z_{\varepsilon}(s, M):=\prod_{[\mu]} \prod_{m=0}^{\infty} \left( 1-\varepsilon(\g) e^{-l(\mu)(s+m)}\right),
\end{align}
where $[\mu]$ runs along all conjugacy classes of hyperbolic, primitive elements $\mu \in \G$. Because the $j$-th longest geodesic is approximately $\log j$ (Lemma $\ref{marginireGeodeziceNonCompact}$), this product is absolutely convergent. It turns out that this function extends holomorphically to the whole complex plane. Moreover, if our surface goes through a pinching process along a geodesic $\eta_t$, we obtain the main result of the paper:
\begin{theorem}\label{convergenceZeta}
Let $M$ be a compact hyperbolic surface going through a pinching  process along $\cup_{j=1}^{\kappa} \eta_j$ (corresponding to the conjugacy class $[\mu_j]$, for $\mu_j \in \G$). If $\varepsilon(\mu_j)=-1$ for $1\leq j \leq \kappa$ then:
\begin{align*}
\lim_{t\rightarrow 0}Z_{\varepsilon}(s,(M,g_t)) \exp\left( - \sum_{j=1}^{\kappa} \frac{ \pi^2}{6l_t(\eta_j)} \right) = Z_{\varepsilon}(s,(M,g_0)) 2^{{\kappa}(1-2s)},
\end{align*}
uniformly on compacts in $\CC$, where $g_t$ is the hyperbolic metric on $M$ at time $t\in [0,1]$. 
\end{theorem}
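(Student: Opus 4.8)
The plan is to take logarithms and pass to the geometric side of the trace formula, where the pinching is easier to control than in the Euler product. More precisely, I would first express $\log Z_\varepsilon(s,(M,g_t))$ in terms of the hyperbolic geometric side of the Selberg trace formula (Theorem \ref{selbergTraceFormula}) for a suitable resolvent-type test function, as is done classically for the scalar zeta. Writing $\log Z_\varepsilon(s, M) = \sum_{[\mu]}\sum_{m\geq 0}\log(1-\varepsilon(\g)e^{-l(\mu)(s+m)})$, one differentiates in $s$, recognizes the resulting sum over all hyperbolic conjugacy classes as (a piece of) the geometric side for the test function whose spherical transform is $\frac{1}{s+1/2+\text{(spectral variable)}}$-type, and keeps track of the $\varepsilon$-weights. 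The point is that the left-hand side of Theorem \ref{convergenceZeta}, after taking $\log$, becomes
\begin{align*}
\sum_{j=1}^{\kappa} \left( \sum_{m\geq 0} \sum_{n\geq 1} \frac{\varepsilon(\mu_j)^n}{n} e^{-n l_t(\eta_j)(s+m)} - \frac{\pi^2}{6 l_t(\eta_j)} \right) + (\text{contribution of all other conjugacy classes}),
\end{align*}
and one analyzes the two pieces separately.

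For the pinched classes, the key computation is an explicit asymptotic as $\ell := l_t(\eta_j) \to 0$ of $f(\ell,s) := \sum_{m\geq 0}\sum_{n\geq 1}\frac{(-1)^n}{n}e^{-n\ell(s+m)}$ (using $\varepsilon(\mu_j)=-1$), together with the fact that only the primitive class $[\mu_j]$ and its powers pinch — the powers $\mu_j^k$ have length $k\ell$ and are already accounted for by the $n$-sum. Summing the geometric series in $m$ first gives $\sum_{n\geq 1}\frac{(-1)^n}{n}\frac{e^{-n\ell s}}{1-e^{-n\ell}}$; as $\ell\to 0$ the dominant behaviour of $\frac{1}{1-e^{-n\ell}}$ is $\frac{1}{n\ell}$, producing the singular term $\frac{1}{\ell}\sum_{n\geq 1}\frac{(-1)^n}{n^2} = -\frac{\pi^2}{12\ell}$; wait — I must be careful with the factor and with the Dirac normalization (the product in $(\ref{formulaZeta})$ and the relation $\D^-\D^+$ vs $\D^2$), so I will recompute the constant and match it against $\frac{\pi^2}{6\ell}$, absorbing any factor of $2$ there. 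The finite remainder, after subtracting the $\frac{\pi^2}{6\ell}$ counterterm, should converge to $\sum_{n\geq1}\frac{(-1)^n}{n}\cdot(\text{something})$; the cleanest route is to write $\frac{1}{1-e^{-n\ell}} - \frac{1}{n\ell} - \frac12 \to 0$-type expansions and then take $\ell\to 0$ termwise (justified by dominated convergence using the alternating sign and $\log j$ growth of geodesic lengths, Lemma \ref{marginireGeodeziceNonCompact}), landing on the constant $-\sum_{n\geq1}\frac{(-1)^n}{n}\big(s - \frac12\big) = (s-\tfrac12)\log 2$ per pinched geodesic — i.e. $2^{1-2s}$ after exponentiating and accounting for the Dirac doubling. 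This is exactly the claimed factor $2^{\kappa(1-2s)}$.

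For the contribution of all the non-pinched conjugacy classes, the strategy is to invoke Theorem \ref{limitSelbergFormula}: the geometric side (minus the pinched-geodesic $\log 2$ terms) converges to the geometric side on $(M,g_0)$. Equivalently, I would argue directly that each conjugacy class $[\mu]$ with $\mu \neq \mu_j^k$ has $l_t(\mu) \to l_0(\mu)$ with $l_0(\mu)$ bounded below (no new short geodesics appear away from the $\eta_j$), and $\varepsilon$ is locally constant in $t$ since the spin structure and the topological type are fixed along the degeneration; then a dominated-convergence argument over the Euler product — using uniform-in-$t$ control of the number of geodesics of length $\leq L$ from Lemma \ref{marginireGeodeziceNonCompact} — gives termwise convergence to the corresponding product for $(M,g_0)$, which is $\log Z_\varepsilon(s,(M,g_0))$. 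Holomorphy of $Z_\varepsilon(\cdot, (M,g_0))$ on all of $\CC$ (asserted in the excerpt, via the trace formula on the limit surface) makes the limit well-defined on compacts, and a Vitali/Montel argument upgrades pointwise convergence to locally uniform convergence.

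The main obstacle I anticipate is justifying the interchange of limits: one must control the $\ell\to 0$ asymptotics of $f(\ell,s)$ \emph{uniformly for $s$ in compact subsets of $\CC$} (in particular for $\Re s \leq 1$, where the defining product diverges and one is really working with the analytic continuation supplied by the trace formula), and simultaneously show the tail of the Euler product over long geodesics is uniformly small in $t$. Handling this cleanly almost certainly requires routing everything through the trace formula / resolvent kernel as in Theorem \ref{limitSelbergFormula} rather than manipulating the product $(\ref{formulaZeta})$ directly, so that the continuous-spectrum and identity contributions (which also move with $t$, cf.\ the $\frac{\area(M)}{4\pi T}$ and $-\frac{k\log 2}{\sqrt{4\pi T}}$ heat-trace terms) are incorporated correctly; keeping the bookkeeping of the parabolic/scattering terms consistent with the $2^{\kappa(1-2s)}$ normalization is where the real care is needed.
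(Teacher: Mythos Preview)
Your computation for the pinched classes is correct and is exactly Lemma \ref{ZetaExponentialGrowth}: the singular $\pi^2/(6\ell)$ comes from the dilogarithm $\li_2(-e^{-s\ell})$ at $\ell=0$, and the finite remainder gives $(1-2s)\log 2$ per pinched geodesic, so your first two paragraphs recover the statement for $\Re(s)>1$. The dominated convergence for the non-pinched classes is also what the paper does (via Proposition \ref{marginireGeodeziceParametru}).

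The gap is the extension to all of $\CC$. Your Vitali/Montel suggestion needs a normal family, i.e.\ an a priori bound on $W_t(s):=Z_\varepsilon(s,(M,g_t))\exp\bigl(-\sum_j\pi^2/(6l_t(\eta_j))\bigr)$ on compacts in $\CC$, and nothing in your outline supplies one: Theorem \ref{limitSelbergFormula} applies only to admissible $v$, and the resolvent test function $v(t)=\frac{e^{-|t|(s-1/2)}}{2s-1}-\cdots$ is admissible only for $\Re(s)>1$, so the geometric side gives you nothing beyond that half-plane. The paper instead passes to the \emph{spectral} side via formula (\ref{derivataLogaritmicaMeromorphicExtension}), writing $\frac{Z'}{Z}(s,(M,g_t))$ as a sum over the eigenvalues $\xi_j^2(t)$, which is meromorphic on all of $\CC$. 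Two external inputs you did not mention then do the work: Pf\"affle's theorem (Theorem \ref{resultPfaffle}) for the convergence $\xi_j(t)\to\xi_j(0)$ of each eigenvalue, and the uniform Weyl law (Theorem \ref{uniformWeylLaw}) to bound the tail $\sum_{j>p}$ uniformly in $t$ (Lemma \ref{uniformEigenvaluesSumBoundAtInfinity}). Together these give uniform convergence of $\frac{Z'}{Z}$ on compacts avoiding the poles of the limit (Theorem \ref{logarithmicDerivativeConvergence}); integrating along paths in such compacts produces the missing bound on $W_t$, and a separate residue argument locates the zeros near each pole. Incidentally, there is no continuous spectrum here---the spin structure is non-trivial along every cusp---so your worry about ``scattering terms'' is misplaced; the identity contribution is $t$-independent and the parabolic contribution is absent for $t>0$, so the bookkeeping you fear is not where the difficulty lies.
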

This result goes further than the analog of Wolpert's conjecture \cite{Wolpert} in our case. Unlike Schulze \cite{SchulzeZetaLaplacian}, who proves the conjecture, we rescale the function by a constant factor which only depends on the length of the pinched geodesic. Thus, up to this rescaling, the family of Selberg zeta functions is continuous along certain paths towards the boundary of the Teichm\" uller space. To prove this result, from the trace formula (Theorem \ref{selbergTraceFormula}), the logarithmic derivative of the Selberg zeta function can be expressed as the trace of a difference of resolvents: $(\D_t^2 - (s-1/2)^2)^{-1} - (\D_t^2 - (s_0-1/2)^2)^{-1}$. Finally, we combine Theorem \ref{uniformWeylLaw} with Theorem 1 of Pf\" affle \cite{Pfaffle}, which in turn is based on the work of Colbois--Curtois \cite{curtois1, curtois2}, to obtain the convergence of the logarithmic derivative of $Z_{\varepsilon}$, from which we derive Theorem \ref{convergenceZeta}.
\subsection*{Acknowledgements} 
I am indebted to Sergiu Moroianu for giving me the idea to study this problem as part of my doctoral research and for introducing me to the world of Selberg trace formulae. I would also like to thank Bernd Ammann, Cipriana Anghel, L\' eo B\' enard, Cezar Joi\c ta, Laura Monk, and Julian Seipel for helpful conversations during the preparation of this paper. Last but not least I would like to thank the anonymous referee for insightful comments and suggestions and for pointing out the references \cite{Hoffmann} and \cite{Warner}.
\subsection*{Funding}
The author was partially supported from the project PN-III-P4-ID-PCE-2020-0794 and from the PNRR-III-C9-2023-I8 grant CF 149/31.07.2023 {\em Conformal Aspects of Geometry and Dynamics}.

\section{The Dirac operator}
\subsection{The spinor bundle}
Given a $n$-dimensional vector space $V$ and a bilinear symmetric form $q$, the Clifford algebra associated to $(V,q)$ is defined as the quotient of the tensor algebra by the Clifford ideal, generated by all the elements of the form $v \otimes w + w\otimes v + 2 q(v,w)$. If $V=\Rn$ and $q$ is the standard scalar product, the associated Clifford algebra is usually denoted $\cl_n$. The spin group $\spin(n)$ is the subgroup of $\cl_n$ consisting of all even products of unit vectors. It follows that $\spin(n)$ is a connected, two-sheeted covering of the special orthogonal group $\so(n)$. Moreover, if $n\geq 3$, $\spin(n)$ becomes the universal cover of $\so(n)$. If $n$ is even, the complexified Clifford algebra $\cl_n \otimes_{\RR} \CC$ is isomorphic to the algebra of complex matrices of size $2^{n/2}$. Let us give a concrete example of this representation. Take $\{ e_1,..., e_{2k} \}$, for $2k=n$, the standard basis in $\RR^{n}$. The vector space $V$ acts on the complex vector space $\Sigma_n:=\wedge^*W$, where $W$ is generated by $\{ 2^{-1/2}(e_1 -ie_2), ..., 2^{-1/2}(e_{2k-1} - ie_{2k}) \}$ by the following action:
\[
cl (v) : = 2^{-1/2}(v-iJ(v))\wedge (\cdot) - 2^{-1/2}(v+iJ(v))\lrcorner (\cdot),
\]
where $J$ is the standard almost complex structure on $\Rn$. Clearly it extends to the tensor algebra $T \CC^n= \oplus_{j=0}^{\infty} (\CC^n)^{\otimes j} $. Since $cl$ vanishes on the Clifford ideal, it descends to $\cl_n\otimes_{\RR}\CC$:
\begin{align*}
cl : \cl_n\otimes_{\RR}\CC \longrightarrow \End_{\CC} (\Sigma_n) \simeq M(2^k ,\CC)
\end{align*}

With the setup fixed, we will now proceed to define the Dirac operator. Consider $(M,g)$ an oriented Riemannian manifold. A spin structure on $M$ is a principal $\spin(n)$ bundle $P_{\spin(n)}M$ together with a covering map $\pi_2$ making the diagram:
\[
\begin{tikzpicture}
  \matrix (m) [matrix of math nodes,row sep=3em,column sep=5em,minimum width=2em]
  {
      P_{\spin(n)} M & \spin(n) \\
        P_{\so(n)} M & \so(n) \\
	    M & {} \\  
  };
  \path
    (m-1-1) edge node [ above] {$(p,s) \mapsto ps$} (m-1-2)
    (m-2-1)	edge node [below] {$(p,A) \mapsto pA$} (m-2-2);

  \path[->] 
  	(m-1-1) edge node [left]{$\pi_2$} (m-2-1)
  	(m-1-2)	edge node [right] {$\pi_1$} (m-2-2)
  	(m-2-1) edge node [left] {} (m-3-1);
\end{tikzpicture}
\]
commutative, where $\pi_1$ is the standard covering map and $ P_{\so(n)} M$ is the oriented orthonormal frame bundle. If $M$ has a spin structure, we can define the spinor bundle as the associated vector bundle:
\[
S:=P_{\spin(n)}M\times_{cl} \Sigma_n.
\]

\subsection{The Dirac operator}
Take $\nabla$ the Levi-Civita connection on $P_{\so(n)}M$ and lift it to a connection on $P_{\spin(n)}M$. It will induce a connection, also denoted $\nabla$, on the associated vector bundle $S$. Then, the Dirac operator acts on $C^{\infty}(S)$, the space of smooth sections of this bundle, in the following manner:
\begin{align*}
\D:C^{\infty}(S) \longrightarrow C^{\infty} (S) && \D:= cl \circ \nabla.
\end{align*}

Clearly, it is a differential operator of order $1$, but it is well known that it is also elliptic: $\sigma_1 \D (\xi)= cl(\xi)$, and $cl(\xi)^2=-|\xi|^2$. If $M$ is compact, since $\D$ is elliptic and self-adjoint, the classical theory of pseudodifferential operators tells us that its spectrum is real and discrete. This fact remains true also when $M$ is not compact (see section \ref{discreteSpectrumSection}) under certain assumptions which will hold in our setting.

\subsection{Spin structures on hyperbolic surfaces}

From now on we will focus on complete oriented hyperbolic surfaces of finite area. Two models of the universal cover of such a surface will be used: the unit disk 
\begin{align*}
\DD:=\left( \{(x,y)\in \RR^2 : x^2+y^2<1 \}, 
g = \frac{4(dx^2+dy^2)}{(1-r^2)^2} \right)
\end{align*}
and the upper half plane 
\begin{align*}
\HH:=\left( \{(x,y)\in \RR^2 : y>0 \}, g = \frac{dx^2+dy^2}{y^2} \right).
\end{align*}
Recall that the group of oriented isometries of $\HH$ is $\psl :=\slinear_2 (\RR)/ \{\pm 1 \}$ (we denote $ \pi : \slinear_2(\RR) \longrightarrow \psl $ the standard projection) with the action given by:
\begin{align*}
\g z = \frac{az+b}{cz+d}, && \g= \begin{bmatrix} a & b \\
                                                                                  c & d   \end{bmatrix} \in \slinear_2(\RR).
\end{align*}
With the mapping 
\begin{align*}
\g \mapsto \left( \g i, \left\{ \g_{*,i} \left( \frac{\partial}{\partial x} \right) ,\g_{*,i} \left( \frac{\partial}{\partial y} \right) \right\} \right),
\end{align*}
we identify $P_{\so (2)}\HH$ to $ \psl$ as principal $\so(2)$ bundles. We shall identify, from now on, isometries and vector frames. Consider $\G\in \psl$ a discrete subgroup. Then the action:
\begin{align*}
\g_*:P_{\so (2)}\HH\longrightarrow P_{\so (2)}\HH, && 
\g_*\left(z, \{ v_1, v_2 \} \right) = \left( \g z, \{ \g_{*,z}v_1, \g_{*,z}v_2 \} \right),
\end{align*}
where $z\in \HH$, becomes the left multiplication of isometries (via the previous mapping). The quotient by this action is isomorphic to the frame bundle over the quotient $\G \setminus \HH$. 

Moreover, we can identify the spin bundle $P_{\spin (2) }\HH$ with $\slinear_2(\RR)$, the space of $2$ by $2$ matrices of determinant $1$, because $\slinear_2(\RR)$ is the two sheeted universal covering for the group of oriented isometries. We are able to lift the left multiplication by $\Gamma$ to $\slinear _2 (\RR)$ if and only if there exists a spin bundle over the quotient surface. But oriented hyperbolic surfaces always admit spin structures (see, for example \cite{carteMoroianuSpinori}), hence, the short exact sequence:
\begin{align*}
 1 \longrightarrow \{ \pm 1 \} \longrightarrow \tilde{\G}:= \pi^{-1}(\G) \longrightarrow \G \longrightarrow 1,
\end{align*}
admits a right splitting $\rho:\G \longrightarrow \tilde{\G}$, $\pi \circ \rho = \Id_{\G}$. 
If $\G \setminus \HH$ is compact and $\G$ has $2g$ generators then there are in total $2^{2g}$ possible spin structures over $\G \setminus \HH$.

\subsection{Encoding the spin structure in a class function}

From the splitting lemma, since $\{\pm1 \}$ is in the center of $\tilde{\G}$, the existence of $\rho$ implies the existance of a left splitting as well: $\chi:\tilde{\G}\longrightarrow \{ \pm 1 \}$, $\chi \circ \iota = \Id_{\{\pm 1\}}$, where $\iota(-1)=-I_2 \in \gl _2(\RR)$. The way D'Hocker and Phong \cite{dhockerPhong} constructed the $\{ \pm 1 \}$-valued class function (i.e. constant along conjugacy classes) we talked about in the introduction is the following. For each element $\g\in \G$, take $\tilde{\g} \in \slinear_2(\RR)$ the unique lift for which $\tr(\tilde{\g})\geq 2$, and define:
\begin{align*}
\nu : \G \longrightarrow \{ \pm 1 \}, && \nu(\g)=\chi(\tilde{\g}).
\end{align*}
Note that the product of two matrices of positive trace is not necessarily of positive trace. Thus, generally, $\nu$ is not multiplicative. Yet, it is constant along conjugacy classes and satisfies $\nu(\g^n)=\nu^n(\g)$. Clearly, this algebraic definition is closely related to the spin structure. So, it should have a geometric interpretation as well. We introduce an apparently different class function and then prove that it is actually the same as $\nu$. Pick $\g\in \G$. If $\g$ is hyperbolic, then we denote $\eta$ the unique geodesic in $\HH$ (parametrized by $t\in \RR$, with speed $1$) fixed by $\g$. Consider $p_{\eta(t)}$ the orthonormal frame $\{ -J(\dot{\eta}(t)), \dot{\eta}(t) \}$, where $J$ is the standard almost complex structure. Clearly $\g p_{\eta(t)}=p_{\g(\eta(t))}$, for any $t \in \RR$. Thus, if $\tilde{p}\in \slinear_2(\RR)$ is an arbitrary lift of $p$, we can define:
\begin{align}\label{caracter}
\varepsilon : \G \longrightarrow \{ \pm 1 \}, && \rho(\g)\tilde{p}_{\eta(t)} = \varepsilon(\g)\tilde{p}_{\g(\eta(t))}.
\end{align}
From the definition, we can immediately see that $\varepsilon$ is a class function and, moreover, $\varepsilon(\g^n)=\varepsilon^n(\g)$.

The case when $\g$ is parabolic is quite similar. For simplicity, suppose $\g$ is the translation by $1$. Consider $t \mapsto \mu_r(rt)$ the unique horocycle preserved by $\g$ which passes through $ir$, for $r\in (0,\infty)$. Then $p=\{ -J(\dot{\mu_r}), \dot{\mu_r} \}$ is a global orthonormal frame, and thus $\tilde{p}$, its lifting, is constant in $t$ and $r$. Hence, we can define $\varepsilon$ as in $(\ref{caracter})$. Note that this class function carries more geometric insights. For a hyperbolic element we have just proved that $\varepsilon$ is exactly the holonomy of the spin bundle over the quotient $\G\setminus \HH$ along the corresponding closed geodesic. For $\g$ a parabolic, $\varepsilon(\g)$ is the limit of the holonomy along horocycles ``escaping" in the cusp.
\begin{proposition}
The two class functions $\nu$ and $\varepsilon$ coincide.
\end{proposition}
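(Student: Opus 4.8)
The plan is to reduce the identity $\nu=\varepsilon$ to an elementary computation in $\slinear_2(\RR)$ carried out after conjugating $\g$ into normal form. First, fix $\g\in\G$, $\g\ne\id$ (where both functions are defined) and observe that $\rho(\g)$ and the positive-trace lift $\tilde\g$ are both preimages of $\g$ under $\pi$, so $\rho(\g)=\sigma\,\tilde\g$ for a unique $\sigma\in\{\pm1\}$. Applying the left splitting $\chi$ induced by $\rho$ (so that $\chi\circ\rho\equiv1$ and $\ker\chi=\rho(\G)$), and using $\chi(-I_2)=-1$ together with $\nu(\g)=\chi(\tilde\g)$, gives $1=\chi(\rho(\g))=\sigma\,\nu(\g)$, i.e.\ $\rho(\g)=\nu(\g)\,\tilde\g$. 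Substituting this into $(\ref{caracter})$ shows that $\varepsilon(\g)=\nu(\g)$ is \emph{equivalent} to
\begin{align}\label{reductionIdentity}
\tilde\g\,\tilde p_{\eta(t)}=\tilde p_{\g\eta(t)}\qquad\text{in }\slinear_2(\RR),
\end{align}
for one continuous lift $\tilde p$ of the Frenet frame $p$ along the axis $\eta$ (resp.\ of the global frame $p$, in the parabolic case), the action being left multiplication under the identification $\slinear_2(\RR)\simeq P_{\spin(2)}\HH$. I would remark that $(\ref{reductionIdentity})$ does not depend on the chosen lift, since $\tilde p\mapsto-\tilde p$ multiplies both sides by $-1$, and that a continuous lift exists because $\eta$ (resp.\ $\HH$) is simply connected.

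The key point is that $(\ref{reductionIdentity})$ no longer mentions $\rho$, $\chi$, or $\G$: it is a statement about an arbitrary hyperbolic or parabolic element of $\psl$, so I would next conjugate $\g$ into normal form. For $g\in\psl$ with a lift $\tilde g\in\slinear_2(\RR)$ we have $\widetilde{g\g g^{-1}}=\tilde g\,\tilde\g\,\tilde g^{-1}$ (conjugation preserves the trace, hence its sign), the axis of $g\g g^{-1}$ is $g\eta$, and, since the identification $P_{\so(2)}\HH\simeq\psl$ is equivariant and the Frenet-frame construction commutes with orientation-preserving isometries, $\tilde p_{g\eta(t)}=\tilde g\,\tilde p_{\eta(t)}$ up to a global sign; hence $(\ref{reductionIdentity})$ holds for $\g$ if and only if it holds for $g\g g^{-1}$.

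Then I would verify the two normal-form cases directly. If $\g$ is hyperbolic with translation length $\ell$, conjugate so that its axis is the imaginary axis and $\g\colon z\mapsto e^{\ell}z$; then $\tilde\g=\operatorname{diag}(e^{\ell/2},e^{-\ell/2})$, with trace $2\cosh(\ell/2)\ge2$. Parametrizing $\eta(t)=ie^{t}$, the definitions give that $p_{\eta(t)}$ is represented by $\operatorname{diag}(e^{t/2},e^{-t/2})\in\slinear_2(\RR)$, which we take as $\tilde p_{\eta(t)}$, so $\tilde\g\,\tilde p_{\eta(t)}=\operatorname{diag}(e^{(t+\ell)/2},e^{-(t+\ell)/2})=\tilde p_{\eta(t+\ell)}=\tilde p_{\g\eta(t)}$, which is $(\ref{reductionIdentity})$. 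If $\g$ is parabolic, conjugate so that $\g\colon z\mapsto z+1$, i.e.\ $\tilde\g=\begin{bmatrix}1&1\\0&1\end{bmatrix}$; the horizontal--vertical orthonormal frame is a global, $\g$-invariant frame whose value at $x+iy$ is represented, up to a fixed right $\so(2)$-factor (harmless, as it cancels on the right in $(\ref{reductionIdentity})$), by $\begin{bmatrix}1&x\\0&1\end{bmatrix}\operatorname{diag}(\sqrt y,1/\sqrt y)\in\slinear_2(\RR)$, so $\tilde\g\,\tilde p_{x+iy}=\begin{bmatrix}1&x+1\\0&1\end{bmatrix}\operatorname{diag}(\sqrt y,1/\sqrt y)=\tilde p_{(x+1)+iy}=\tilde p_{\g(x+iy)}$, again $(\ref{reductionIdentity})$. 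This would finish the proof.

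The matrix computations in the last step are routine, so the main obstacle will be the sign bookkeeping in the first two steps: pinning down that $\chi$ is the left splitting with $\chi\circ\rho\equiv1$ (otherwise the sign in $\rho(\g)=\nu(\g)\tilde\g$ is off by a character of $\G$), and recognizing that the conjugation in the second step is only legitimate \emph{after} $\rho$ has been eliminated from the statement, since $\rho$ need not be equivariant under conjugation by elements outside $\G$, whereas the reduced identity $(\ref{reductionIdentity})$ is.
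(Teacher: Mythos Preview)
Your proof is correct and follows essentially the same approach as the paper: both arguments conjugate $\g$ into diagonal (resp.\ unipotent) normal form, identify the Frenet frame along the axis with the explicit one-parameter subgroup $\operatorname{diag}(e^{t/2},e^{-t/2})$, and use the relation $\chi\circ\rho\equiv 1$ together with the positive-trace condition to match $\varepsilon$ with $\nu$. The only difference is organizational: you first eliminate $\rho$ to obtain the $\G$-free identity $\tilde\g\,\tilde p_{\eta(t)}=\tilde p_{\g\eta(t)}$ and then conjugate, whereas the paper carries $\rho$ through the conjugation (writing $\g=ae^la^{-1}$ and lifting $a$ to $A\in\slinear_2(\RR)$) and only applies $\chi$ at the very end, observing that $A^{-1}\rho(\g)\varepsilon(\g)A$ has trace $2\cosh(l/2)>2$. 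Your explicit treatment of the parabolic case and your remark on why conjugation by elements outside $\G$ is legitimate only after $\rho$ has been removed are welcome clarifications that the paper leaves implicit.
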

\begin{proof}
Let us suppose that $\g$ is hyperbolic. The other case can be solved similarly. There exists $l>0$ and $a\in \psl$ such that: 
$\g=a e^l a^{-1}$, where $e^l (z) = e^lz$. Then $\eta$, the geodesic of unit speed preserved by $\g$, is parametrized by $\eta(t)=a(ie^t)$. Moreover, with the previous identification, $p_{\eta(t)}=ae^t$, where $p$ is as in the definition of $\varepsilon$. If we fix $A \in \slinear_2(\RR)$ an arbitrarily lift for $a$, then 
$A\begin{bmatrix}
e^{t/2} & 0 \\
0 & e^{-t/2} \\
\end{bmatrix}$ is a lift of $p$. Therefore:
\begin{align*}
\rho(\g) A 
\begin{bmatrix}
1 & 0 \\
0 & 1 \\
\end{bmatrix}  &= \varepsilon(\g) 
A
\begin{bmatrix}
e^{l/2} & 0 \\
0 & e^{-l/2} \\
\end{bmatrix};\\
A^{-1}\rho(\g)\varepsilon(\g)A &=
\begin{bmatrix}
e^{l/2} & 0 \\
0 & e^{-l/2} \\
\end{bmatrix}.
\end{align*}
Since $\tr(\rho(\g)\varepsilon(\g))>2$ and $\chi\circ\rho = \Id$, we get $\nu(\g)=\chi(\rho(\g)\varepsilon(\g))=\varepsilon(\g)$.
\end{proof}
From now on, to avoid a heavy notation, the action of $\G$ on the isomorphisms of the spin bundle given by $\g\mapsto \rho(\g)$ will be denoted by $\g \mapsto \g_*$.

\subsection{Non-trivial spin structures and discrete spectrum} \label{discreteSpectrumSection}
\begin{definition}\label{DefinitionNonTrivial}
Suppose that the finite area quotient $M:=\G \setminus \HH$ has cusps. A spin structure on $M$ is called \emph{non-trivial} if the associated class function has the property $\varepsilon(\g)=-1$ for every primitive parabolic element $\g\in \G$.
\end{definition}

A finite area, complete hyperbolic surface can be compactified by glueing a circle at the ``end" of each cusp. This can be done by attaching a point for each geodesic emerging from the cusp. If the initial surface is endowed with a spin structure this will produce a spin structure on the compactification as well. B\" ar \cite{Bar} showed that if the Dirac operator is invertible on this boundary circles, then its spectrum is discrete.
Later, Moroianu \cite{Moroianu} obtained the discreteness of the spectrum for an arbitrary metric for which the Dirac operator on the boundary at infinity is invertible. He showed that the Dirac operator is fully elliptic in the calculus of cusp pseudo-differential operators constructed by Mazzeo and Melrose \cite{MelrsoseMazzeo} and its resolvent is compact in $L^2$. By considering the holomorphic family of complex powers inside the cusp calculus, he obtained a Weyl law. Note that both \cite{Bar} and \cite{Moroianu} use an analytic definition for non-triviality. Namely a spin structure is \emph{non-trivial} on a cusp if the Dirac operator induced on the circle glued at the end of the cusp is invertible. We will now show that this condition of invertibility is equivalent with Definition \ref{DefinitionNonTrivial}.

\begin{lemma}
The Dirac operator on a finite area hyperbolic surface with cusps has discrete spectrum if and only if the spin structure is non trivial in the sense of Definition \ref{DefinitionNonTrivial}.
\end{lemma}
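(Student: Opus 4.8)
The plan is to match the geometric condition in Definition~\ref{DefinitionNonTrivial} with the analytic criterion used by B\"ar \cite{Bar} and Moroianu \cite{Moroianu}, by which the Dirac operator on $M=\G\setminus\HH$ has discrete spectrum as soon as the Dirac operator induced on the circle glued at the end of each cusp is invertible. So the core of the argument is the local statement: for a single cusp, with associated primitive parabolic element $\g$, this cross-sectional operator is invertible if and only if $\varepsilon(\g)=-1$. After conjugating in $\psl$ we may assume the cusp is $\langle\g\rangle\setminus\{\im z>c\}$ with $\g\colon z\mapsto z+1$; substituting $y=e^{t}$ turns the hyperbolic metric into $dt^2+e^{-2t}dx^2$ on $(\log c,\infty)\times(\RR/\ZZ)$, whose cross-section at height $t$ is a circle of length $e^{-t}$ collapsing as $t\to\infty$.

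First I would identify the spin structure over the cusp. A spin structure on $(\log c,\infty)\times S^1$ is determined by the holonomy of the spinor bundle around the $S^1$ factor, and by the geometric description of $\varepsilon$ given right after $(\ref{caracter})$ this holonomy is exactly $\varepsilon(\g)$ --- the limit of the holonomy along the horocycles $x\mapsto x+1$. Hence smooth spinors on the cusp are $x$-periodic when $\varepsilon(\g)=1$ and $x$-antiperiodic, $\psi(x+1,t)=-\psi(x,t)$, when $\varepsilon(\g)=-1$. After rescaling the cross-section to unit length and discarding a zeroth-order (mean curvature) term, the induced Dirac operator on it is essentially $i\,d/dx$ acting on such $x$-(anti)periodic spinors; its spectrum is therefore $2\pi\ZZ$ if $\varepsilon(\g)=1$ and $2\pi(\ZZ+\tfrac12)$ if $\varepsilon(\g)=-1$, so it is invertible precisely when $\varepsilon(\g)=-1$. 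Running this over all cusps and invoking the B\"ar--Moroianu discreteness result gives the ``if'' direction of the lemma.

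For the ``only if'' direction --- and, if one wants, a self-contained proof of the other one --- I would separate variables on the cusp. A short computation of the spin connection of $dt^2+e^{-2t}dx^2$ yields, in an orthonormal frame $e_1,e_2$ with $e_1$ pointing into the cusp,
\[
\D=cl(e_1)\bigl(\partial_t-\tfrac12\bigr)+e^{t}\,cl(e_2)\,\partial_x ,
\]
the constant $-\tfrac12$ being forced by self-adjointness with respect to the volume weight $e^{-t}\,dt$. Expanding a spinor in Fourier modes $e^{2\pi i(n+\delta)x}$, with $\delta=0$ if $\varepsilon(\g)=1$ and $\delta=\tfrac12$ if $\varepsilon(\g)=-1$, the $n$-th mode contributes the operator $cl(e_1)(\partial_t-\tfrac12)+2\pi i(n+\delta)e^{t}cl(e_2)$ on $L^2\bigl((\log c,\infty),e^{-t}\,dt\bigr)$; conjugation by the unitary $e^{t/2}$ turns the weight into $dt$ and removes the shift, leaving $cl(e_1)\partial_t+2\pi i(n+\delta)e^{t}cl(e_2)$. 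If $\varepsilon(\g)=1$ the mode $n=0$ is the free one-dimensional Dirac operator $cl(e_1)\partial_t$, whose essential spectrum is all of $\RR$, so $\D$ has non-empty essential spectrum and is not discrete. If $\varepsilon(\g)=-1$ then $n+\delta\neq0$ for every $n$, so each reduced operator carries the confining potential $2\pi|n+\delta|e^{t}$; squaring it gives $-\partial_t^2+(2\pi(n+\delta))^2e^{2t}+O(e^{t})$, which has compact resolvent, so each mode has discrete spectrum, and since the bottom of that spectrum grows with $|n|$ only finitely many modes reach any bounded window. Together with the always-discrete contribution of the compact core of $M$, this shows the spectrum of $\D$ is discrete exactly when $\varepsilon(\g)=-1$ at every cusp, which is Definition~\ref{DefinitionNonTrivial}.

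The main obstacle is the bookkeeping that ties $\varepsilon(\g)$ to the holonomy of the spinor bundle around the cusp circle --- so that $\varepsilon(\g)=-1$ really corresponds to antiperiodic spinors, hence to an invertible cross-sectional operator --- together with making the cusp analysis rigorous: controlling the $e^{-t}\,dt$ weight as $t\to\infty$, checking that the exponentially growing potential produces a discrete spectrum with no finite accumulation point, and that gluing the Fourier modes to the compact core (via a decomposition principle, or via the cusp pseudodifferential calculus of Mazzeo--Melrose \cite{MelrsoseMazzeo} used in \cite{Moroianu}) preserves discreteness.
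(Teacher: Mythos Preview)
Your proposal is correct. The first two paragraphs follow essentially the paper's route: identify $\varepsilon(\g)=-1$ with invertibility of the cross-sectional Dirac operator on the circle glued at infinity, then invoke B\"ar's theorem. The paper carries this out in the coordinates $g=dx^2/x^2+x^2\,d\theta^2$ on $(0,1/\pi)\times[0,2\pi]$, building explicit global sections $\sigma^{\pm}=e^{i\theta/2}[\tilde p,\cdot]$ of the spinor bundle over the cusp and computing the induced Dirac spectrum on $S^1$ as $\ZZ+\tfrac12$ when $\varepsilon=-1$ (respectively $\ZZ$ when $\varepsilon=1$); your holonomy/periodicity argument in the model $dt^2+e^{-2t}dx^2$ reaches the same conclusion by an equivalent path. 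Your third paragraph --- the self-contained Fourier-mode analysis showing that the zero mode yields essential spectrum when $\varepsilon=1$ while the confining potentials $2\pi|n+\tfrac12|e^{t}$ force discreteness when $\varepsilon=-1$ --- goes beyond what the paper does, since the paper simply cites \cite{Bar} for both directions of the discreteness dichotomy. That extra argument is sound in outline, with exactly the caveats you flag about patching the cusp analysis to the compact core.
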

\begin{proof}
Fix a cusp $C$. In this cusp, the metric can be written as:
\begin{align*}
g=\frac{dx^2}{x^2} + x^2d\theta^2;
&&
(x,\theta)\in \left( 0,\frac{1}{\pi} \right) \times [0,2\pi]=:C.
\end{align*} 
Denote $p:=\{ x\frac{\partial}{\partial x}, \frac{1}{x}\frac{\partial}{\partial \theta} \} \in P_{\so_2(\RR)}C$ an orthonormal on $C$. Then $\varepsilon$ takes value $-1$ on the primitive parabolic element associated to this cusp if and only if $\tilde{p}$, the lift of $p$ along any horocycle to the spin bundle $P_{\spin (2)}C$, takes opposite values at $\theta = 0$ and $ \theta = 2\pi$, and in that case, its rotation by angle $\theta/2$ forms a loop:
\begin{align*}
\tilde{p}_{\vert_{\theta=0}} = - \tilde{p}_{\vert_{\theta=2\pi}};
&&
\left( \tilde{p}e^{i\theta/2}\right)_{\vert_{\theta=0}} = \left(\tilde{p}e^{i\theta/2}\right)_{\vert_{\theta= 2\pi}}.
\end{align*}
It follows that the spinor bundle over the cusp $C$ is spanned by two global sections:
\begin{align*}
\sigma^+:=e^{i\theta/2}[\tilde{p},1];
&&
\sigma^-:=e^{i\theta/2}\left[ \tilde{p}, \frac{e_1-ie_2}{\sqrt{2}} \right].
\end{align*}
By direct computations the Dirac operator is given by the matrix:
\begin{align*}
\D = 
\begin{bmatrix}
0 & \frac{x-1}{2x}+x\frac{\partial}{\partial x}+\frac{i}{x}\frac{\partial}{\partial \theta} \\
-\frac{x+1}{2x}-x\frac{\partial}{\partial x}+\frac{i}{x}\frac{\partial}{\partial \theta} & 0
\end{bmatrix}.
\end{align*}
We glue a $S^1$ at the end of the cusp and endow it with the spin structure induced from the surface. There are only two spin bundles over a circle: either the union of two disjoint circles or a circle winding twice around the base. But $\tilde{p}$ is not a global section in $P_{\spin (2)}C$, hence our induced spin bundle is a circle. Let $q:=\{ \frac{\partial}{\partial\theta }\}$ be the frame given by the oriented unit tangent vector at each point of the circle and $\tilde{q}$ one lift (out of the two possible) to $P_{\spin(1)}S^1$. Then $\sigma := e^{i\theta/2}[\tilde{q}, 1]$ is a global section in the spinor bundle. Moreover, $e^{ik\theta}\sigma$, for $k\in \ZZ$, forms a orthogonal $L^2$ basis for this bundle. But this means that the spectrum of the Dirac operator on $S^1$ is $\ZZ+\frac{1}{2}$ and thus, it is invertible, which is precisely B\" ar's non-triviality condition from \cite[Theorem 2]{Bar}. Similarly, if $\varepsilon$ takes value $1$ on the primitive parabolic element corresponding to $C$, the spectrum is $\ZZ$.
\end{proof}

\subsection{Explicit formulae for the Dirac operator}

For any point $(x,y)\in \HH$ we consider the orthonormal frame $p:=\{ y\frac{\partial}{\partial x}, y\frac{\partial}{\partial y} \}$. If $\tilde{p}$ is one of the two lifts of this frame to $P_{\spin(2)} \HH$, then the spinor bundle is generated by the global sections
\begin{align}\label{sectiuneGlobalaH2}
\sigma^+:=[\tilde{p}, 1] && \sigma^-:=[\tilde{p}, e_1-ie_2].
\end{align}
In the splitting given by these sections, the Dirac operator is given by the off-diagonal symmetric matrix of operators:
\begin{align*}
\D = 
\begin{bmatrix}
0 & -y\frac{\partial}{\partial x} + iy \frac{\partial}{\partial y} -\frac{i}{2} \\
y\frac{\partial}{\partial x} + iy \frac{\partial}{\partial y} -\frac{i}{2} & 0
\end{bmatrix}:
\begin{matrix}
 \ S^+ \\
\oplus \\
 \ S^-
\end{matrix} 
\longrightarrow
\begin{matrix}
 \ S^+ \\
\oplus \\
 \ S^-
\end{matrix}.
\end{align*}
 
In the case of the unit disk $\DD$, we use the orthonormal frame $\{\frac{1-r^2}{2}\frac{\partial}{\partial x}, \frac{1-r^2}{2}\frac{\partial}{\partial y}   \}$, where $r:= \sqrt{x^2+y^2}$.
Using the same sections $\sigma^{\pm}$ as above, the Dirac operator has the form:
\begin{align*}
\D = 
\begin{bmatrix}
0 & - \frac{1-r^2}{2} \frac{\partial}{\partial x} + i\frac{1-r^2}{2} \frac{\partial}{\partial y} -\frac{x-iy}{2} \\
\frac{1-r^2}{2} \frac{\partial}{\partial x} + i\frac{1-r^2}{2}  \frac{\partial}{\partial y} +\frac{x+iy}{2} & 0
\end{bmatrix}.
\end{align*}

We denote $\D^+$ the part of $\D$ mapping $S^+$ into $S^-$ ($\D^-$ is defined similarly). Thus, rewriting the operator in polar coordinates $\frac{\partial}{\partial r}$ and $\frac{\partial}{\partial \theta}$, we get:

\begin{equation}\label{ecuatieDiractPatratRadial}
\begin{split}
\D^- &=  \frac{1}{2} \left( -{re^{-i\theta}} - {e^{-i\theta}(1-r^2)} \frac{\partial}{\partial r} + \frac{ie^{-i\theta}(1-r^2)}{r} \frac{\partial}{ \partial \theta}  \right);  \\
\D^+ &= \frac{1}{2} \left( {re^{i\theta}} + {e^{i\theta}(1-r^2)} \frac{\partial}{\partial r} + \frac{ie^{i\theta}(1-r^2)}{r} \frac{\partial}{ \partial \theta} \right).
\end{split}
\end{equation}

\section{Bounding the number of geodesics on a hyperbolic surface}
In this section we want to bound the number of primitive geodesics on a hyperbolic surface. A lot is known in this direction, we mention Randol \cite{boundGeodesicsRandon}, who proved that on a fixed compact hyperbolic surface,
\[
\pi(x) = \li (x) + O(x^\alpha),
\]
where $\frac{3}{4}\leq \alpha <1 $, $\li (x):= \int_2^{x} \frac{dt}{\log t}$ is the logarithm integral function and $\pi(x)$ represents the number of closed primitive geodesics of length less than or equal to $\log x$. One can prove such a formula by applying the Selberg trace formula for the scalar Laplacian $\Delta$ to a particular class of functions. One problem with this rather exact estimate is that we don't control how the error term behaves when we consider families of hyperbolic metrics (instead of a fixed one). In what follows, we will use the following notations: 
\begin{itemize}
\item $L(r)$, the number of closed geodesics of length at most $r$;
\item $d(p,q)$, the hyperbolic distance between $p$ and $q$;
\item $\diam(M)$, the diameter of the surface $M$;
\item $l(\eta)$, the length of the geodesic $\eta$.
\end{itemize}

\begin{lemma}\label{marginireGeodeziceCompact}
Let $M = \G \setminus \HH $ be a hyperbolic compact surface. Then $L(r) < C e^r$, where $C$ depends only on the diameter and the area of $M$.
\end{lemma}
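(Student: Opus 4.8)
\emph{Plan.} This is a standard orbit-counting / volume-packing argument. Fix a lift $p\in\HH$ of a basepoint $\bar p\in M$ and let $D\subset\HH$ be the Dirichlet fundamental domain for $\G$ centred at $p$, so that $\area(D)=\area(M)$ and the translates $\{\g D:\g\in\G\}$ tile $\HH$ with pairwise disjoint interiors. First I would record the elementary fact that $D\subseteq\overline{B(p,\diam(M))}$: if $z\in D$ then $d(z,p)\le d(z,\g p)$ for every $\g\in\G$, while the projection of $z$ to $M$ lies within $\diam(M)$ of $\bar p$, so $\min_{\g\in\G}d(z,\g p)\le\diam(M)$; hence $d(z,p)\le\diam(M)$. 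In particular $\diam(D)\le 2\diam(M)$.

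Next I would bound, for $R>0$, the quantity $N(R):=\#\{\g\in\G:\ d(p,\g p)\le R\}$. For each such $\g$ the tile $\g D$ is contained in $B(\g p,\diam(M))$, hence in $B(p,R+\diam(M))$; since the tiles have disjoint interiors and area $\area(M)$, comparing volumes gives
\begin{align*}
N(R)\,\area(M)\ \le\ \area\!\big(B(p,R+\diam(M))\big)\ =\ 4\pi\sinh^2\!\Big(\tfrac{R+\diam(M)}{2}\Big)\ <\ \pi\, e^{\,R+\diam(M)},
\end{align*}
so $N(R)<\pi\, e^{\,R+\diam(M)}/\area(M)$.

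It then remains to inject the set of closed geodesics of length $\le r$ into $\{\g\in\G:\ d(p,\g p)\le r+2\diam(M)\}$. Given such a geodesic $c$, pick $\bar q\in c$ and a lift $q\in\HH$ with $d(q,p)\le\diam(M)$ (possible since $d(\bar q,\bar p)\le\diam(M)$: lift to $p$ a minimizing path from $\bar q$ to $\bar p$). The point $q$ lies on some component of the preimage of $c$ in $\HH$, i.e.\ on the axis of a unique hyperbolic element $\g_c\in\G$ lying in the conjugacy class determined by $c$ and translating by $l(c)$. Since $q$ is on the axis, $d(q,\g_c q)=l(c)\le r$, whence
\begin{align*}
d(p,\g_c p)\ \le\ d(p,q)+d(q,\g_c q)+d(\g_c q,\g_c p)\ \le\ \diam(M)+r+\diam(M)\ =\ r+2\diam(M).
\end{align*}
Distinct closed geodesics determine distinct conjugacy classes, hence distinct elements $\g_c$, so $c\mapsto\g_c$ is injective. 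Combining the two estimates, $L(r)\le N\big(r+2\diam(M)\big)<\dfrac{\pi\, e^{\,3\diam(M)}}{\area(M)}\, e^{\,r}=:C e^{\,r}$, with $C$ depending only on $\diam(M)$ and $\area(M)$.

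The only mildly delicate point is the bookkeeping linking closed geodesics on $M$ to conjugacy classes of hyperbolic elements of $\G$ and the verification that the representative $\g_c$ may be taken with its axis through the chosen near-$p$ lift $q$; this is routine once one recalls the correspondence between free homotopy classes of loops, conjugacy classes in $\G$, and axes in $\HH$, so I do not expect a genuine obstacle.
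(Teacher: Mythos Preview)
Your argument is correct and follows essentially the same route as the paper: inject the set of closed geodesics of length $\le r$ into the orbit set $\{\gamma\in\Gamma:\ d(\tilde p,\gamma\tilde p)\le r+2\diam(M)\}$ via a lift of a point on the geodesic, and then bound the latter by comparing the area of the ball of radius $r+3\diam(M)$ with the area of the fundamental domain. Your write-up is in fact a bit more explicit than the paper's (you spell out why $D\subseteq\overline{B(p,\diam(M))}$ and compute the hyperbolic ball area), but the structure and the constants are the same.
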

\begin{proof}
Fix $p$ a point on $M$, consider $\tilde{p}\in \HH$ a lift of $p$ and $D_{\tilde{p}}$ the Dirichlet fundamental domain. Take $\eta$ a closed geodesic on $M$ and fix $q$ a point on it. If $\tilde{q}\in D_{\tilde{p}}$ is the unique lift of $q$ in $D_{\tilde{p}}$, there exists a unique $\g \in \G$ such that the line from $\tilde{q}$ to $\g \tilde{q}$ is the lift of $\eta$. Hence:
\[
l(\eta) = d(\tilde{q}, \g \tilde{q}) > d(\tilde{p}, \g \tilde{p}) - 2\diam(M).
\]
Since we get a different $\g$ for each geodesic $\eta$, it follows that
\[
L(r)< |\{ \g \in \G : d(\tilde{p}, \g\tilde{p}) \leq r+2\diam(M) \}|.
\]
But the right-hand side is clearly bounded by the area of a hyperbolic disk of radius $r+3\diam(M)$ divided by the area of $M$.
\end{proof}

When we vary the metric on $M$, its diameter can explode and in fact we consider precisely deformations of exploding diameter. Hence, we need to refine the above statement. One way to do that is by considering a compact sub-surface which remains bounded throughout the process.
\begin{proposition}\label{marginireGeodeziceParametru}
Let $M= \G \setminus  \HH$ be a compact hyperbolic surface and let $\eta$ be a closed, simple, geodesic. Moreover, let $g_t$ be a family of metrics on $M$ such that $l(\eta_t)$ converges to $0$ as $t$ converges to $0$ (as described in $(\ref{pinchingProcess})$). Then, there exists  a sub-surface $K(t)$ with piecewise smooth boundaries, and a constant $C$ which only depends on the area of $M$ and the diameter of $K(t)$,  such that:
\begin{itemize}
\item $\diam(K(t))$ is uniformly bounded for all $t>0$;
\item $L_{\eta_t}(r) < C e^r$, where $L_{\eta_t}(r)$ counts those closed geodesics, which are \emph{not} multiples of $\eta_t$ (i.e. $t \mapsto \eta_t(nt)$, for any $n$ a positive integer), that are smaller than $r$.
\end{itemize}  
\end{proposition}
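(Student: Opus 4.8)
The plan is to run the argument of Lemma~\ref{marginireGeodeziceCompact} with the fundamental domain of $(M,g_t)$ --- whose diameter explodes as $l_t(\eta)\to0$ --- replaced by a compact core $K(t)$ of uniformly bounded diameter. From the explicit form of $g_t$ near $\eta$ recalled in the introduction, the collar $\{(x,\theta):|x|<x_t\}$ about $\eta_t$, with $x_t=l_t(\eta)/\sinh(l_t(\eta)/2)$, is embedded and its $g_t$-width equals $2\operatorname{arcsinh}(x_t/l_t(\eta))=2\operatorname{arcsinh}\bigl(1/\sinh(l_t(\eta)/2)\bigr)$, which tends to $\infty$. I would fix a large integer $N$ (depending only on the genus) and set $K(t):=M\setminus\{(x,\theta):|x|<x_t/N\}$; this is a compact subsurface with smooth boundary circles, and the removed sub-collar has $g_t$-area $4\pi x_t/N<8\pi/N$ (since $x_t<2$ always), so for $N$ large $\area K(t)\ge\tfrac34\area M$ (recall $\area M=2\pi(2g-2)\ge4\pi$).

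\emph{Step 1 ($\diam_{g_t}K(t)$ bounded uniformly in $t$).} Decompose $K(t)$ into the ``rim'' $R(t):=\{x_t/N\le|x|<x_t\}$ and the ``core'' $K_0(t):=M\setminus\{|x|<x_t\}$, glued along the circle $\{|x|=x_t\}$, whose $g_t$-length is $2\pi\sqrt{l_t(\eta)^2+x_t^2}\le4\pi$. In the model metric $R(t)$ has $g_t$-width $2\operatorname{arcsinh}\bigl(1/\sinh(l_t(\eta)/2)\bigr)-2\operatorname{arcsinh}\bigl(1/(N\sinh(l_t(\eta)/2))\bigr)$, which stays bounded as $l_t(\eta)\to0$, and all of its cross-sectional circles have length $\le4\pi$; hence $\diam_{g_t}R(t)$ is bounded. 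The core $K_0(t)$ sits in a fixed compact subset of $M\setminus\eta$ on which $g_t\to g_0$ by Definition~\ref{pinchingProcess}, so $(K_0(t),g_t)$ converges, as $t\to0$, to the compact core of the limit cusped surface $(M\setminus\eta,g_0)$, and $\diam_{g_t}K_0(t)$ is bounded (for $t$ bounded away from $0$ one uses instead the continuity of $t\mapsto\diam_{g_t}M$). Finally $\diam_{g_t}K(t)\le\diam_{g_t}R(t)+4\pi+\diam_{g_t}K_0(t)$.

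\emph{Step 2 (the count).} The sub-collar $\{|x|<x_t/N\}$ embeds isometrically as an open sub-cylinder of the complete hyperbolic cylinder about $\eta_t$, whose only closed geodesic (as a point set) is the core $\eta_t$; hence a closed geodesic that is not a multiple of $\eta_t$ is not contained in the sub-collar, so it meets $K(t)$. Fix $p_0\in K(t)$, a lift $\widetilde p_0\in\HH$, write $\G_t$ for the Fuchsian group of $(M,g_t)$, and set $D:=\diam_{g_t}K(t)$. If $\gamma$ is counted by $L_{\eta_t}(r)$, choose $q\in\gamma\cap K(t)$; lifting a path from $p_0$ to $q$ inside $K(t)$ of $g_t$-length $\le D$ shows that the $\G_t$-conjugacy class of $\gamma$ has a representative $g$ with $d(\widetilde p_0,g\widetilde p_0)\le l_{g_t}(\gamma)+2D\le r+2D$. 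As distinct closed geodesics give distinct conjugacy classes, $L_{\eta_t}(r)\le\#\{g\in\G_t:d(\widetilde p_0,g\widetilde p_0)\le r+2D\}$. To bound this, let $\mathcal F$ be the Dirichlet domain at $\widetilde p_0$ and $\widetilde K(t)$ the preimage of $K(t)$: for $z\in\mathcal F\cap\widetilde K(t)$ the orbit point nearest $z$ is $\widetilde p_0$, so $d(z,\widetilde p_0)=d_{(M,g_t)}(\pi z,p_0)\le D$ and thus $\mathcal F\cap\widetilde K(t)\subseteq B(\widetilde p_0,D)$. The $\G_t$-translates of $\mathcal F\cap\widetilde K(t)$ are pairwise disjoint of area $\area K(t)\ge\tfrac34\area M$, and those coming from $g$ with $d(\widetilde p_0,g\widetilde p_0)\le r+2D$ lie inside $B(\widetilde p_0,r+3D)$; comparing areas, $\#\{g:\cdots\}\le\area B(\widetilde p_0,r+3D)/\area K(t)\le\frac{8\pi}{3\area M}e^{3D}e^{r}$, i.e.\ $L_{\eta_t}(r)<Ce^{r}$ with $C=C(\area M,D)$. (When $\eta$ separates $M$, apply this to each of the two components of $K(t)$ --- each of area $\ge\pi$ by Gauss--Bonnet --- and add.)

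The main obstacle is the competing constraints on $K(t)$: it must omit nearly the whole diverging collar for $\diam_{g_t}K(t)$ to stay bounded, yet retain almost all the area so that the packing estimate in Step~2 depends only on $\area M$. These are compatible because a hyperbolic collar, though of diverging width, has uniformly bounded area --- concentrated away from its thin core --- so deleting its inner $1/N$ costs only $O(1/N)$ of area while already truncating all but a bounded portion of its width. The other delicate point, the uniform diameter bound in Step~1, rests on the explicit collar model together with the convergence $g_t\to g_0$ away from $\eta$ built into the pinching process.
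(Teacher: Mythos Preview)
Your argument is correct and reaches the same conclusion as the paper, but the construction of $K(t)$ is genuinely different. The paper works inside a pair-of-pants decomposition: in a pair having $\eta_t$ as a cuff it lifts a fundamental hexagon to $\HH$, drops a perpendicular from the foot of the orthogeodesic $AF$ to the orthogeodesic $BC$, and obtains explicit points $R,S$ whose hyperbolic distance it computes in closed form, showing $\cosh d(R,S)\searrow 3/2$ as $l_t(\eta)\to 0$; the piecewise-geodesic loop $RS\cup SR'$ (together with its counterpart on the other pair of pants adjacent to $\eta_t$) then bounds the cylinder to be excised, and the lower bound on its length is what the paper invokes for the uniform diameter bound. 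You instead read everything off the collar model $g_t=\tfrac{dx^2}{x^2+l_t^2}+(x^2+l_t^2)\,d\theta^2$ already quoted in the introduction: excise the inner $1/N$ of the collar, bound the rim width by the elementary asymptotic $\operatorname{arcsinh}(y)-\operatorname{arcsinh}(y/N)\to\log N$ as $y\to\infty$, and control the core via the convergence $g_t\to g_0$ on compacta of $M\setminus\eta$ built into Definition~\ref{pinchingProcess}. The paper's route is self-contained hyperbolic trigonometry with an explicit universal constant and no appeal to metric convergence; yours is shorter, gives smooth rather than piecewise-geodesic boundaries, and makes the area--width trade-off you highlight at the end completely transparent. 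Your packing step is also a touch more explicit than the paper's: by working with $\mathcal F\cap\widetilde K(t)$ you recover the denominator $\area K(t)\ge\tfrac34\area M$ directly, whereas the paper simply refers the reader back to Lemma~\ref{marginireGeodeziceCompact}.
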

\begin{proof}
Consider a maximal system of $3g-3$ simple, closed geodesics on $M$, including $\eta_t$ among them. We cut along these geodesics to decompose the surface into $2g-2$ pairs of pants. Consider a pair whose boundary contains $\eta_t$. Denote $BC$, $DE$ and $FA$ the lines that realize the distance between the boundary components (minimal orthogeodesics), as seen in Figure \ref{pairOfPants}.

\begin{figure}[H]
\centering
\includegraphics[width=4.5cm, height=4.8cm]{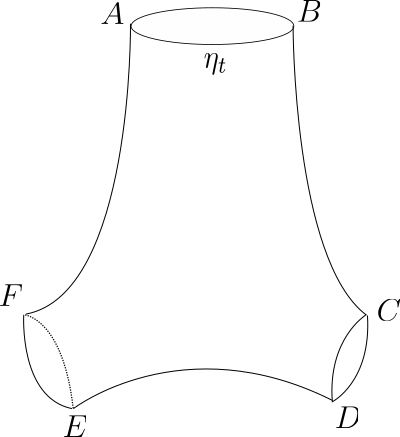}
\caption{Pair of pants}
\label{pairOfPants}
\end{figure}

The points $A$ and $B$ are diametrically opposed on $\eta$ (similarly, $C$ and $D$, $E$ and $F$ are also diametrically opposed). We take $AFEDCD'E'F'A'B$ a fundamental domain such that the geodesic $BC$ corresponds to the imaginary half-line as in Figure \ref{fundamentalDomain}. Continue the line $AF$ until it meets the real axis. From that point we consider the unique geodesic perpendicular to $BC$, and denote $S$ the point of intersection. Construct similarly $R\in AF$ and $R'\in A'F'$.
The union of the line segments $RS$ and $R'S$ will project onto a piecewise smooth loop on $M$. Consider the same loop for the other pair of pants which has $\eta_t$ as boundary (it might be another boundary component of the same pair of pants). These two loops will disconnect the surface $M$ into two connected components: a cylinder containing $\eta_t$ and the rest of the surface, denoted $K(t)$ as seen in Figure \ref{desenK_t}.
We claim that the length of the boundaries of $K(t)$ are bounded from below. Indeed, pick an isometry which maps the point $A$ into $i$ and the point $B$ into $ie^{l(\eta_t)/2}$. A straightforward computation will yield the coordinates of $R$ and $S$:
\begin{align*}
R\left( \frac{2e^{l(\eta_t)/2}}{e^{l(\eta_t)}+1}, \frac{e^{l(\eta_t)}-1}{e^{l(\eta_t)}+1} \right), && S \left( \frac{2e^{l(\eta_t)} }{e^{l(\eta_t)}+1},\frac{e^{l(\eta_t)}(2(e^{l(\eta_t)}-1)}{e^{l(\eta_t)}+1} \right).
\end{align*}
\begin{figure}[H]
\centering
\includegraphics[width=15cm, height=7.5cm]{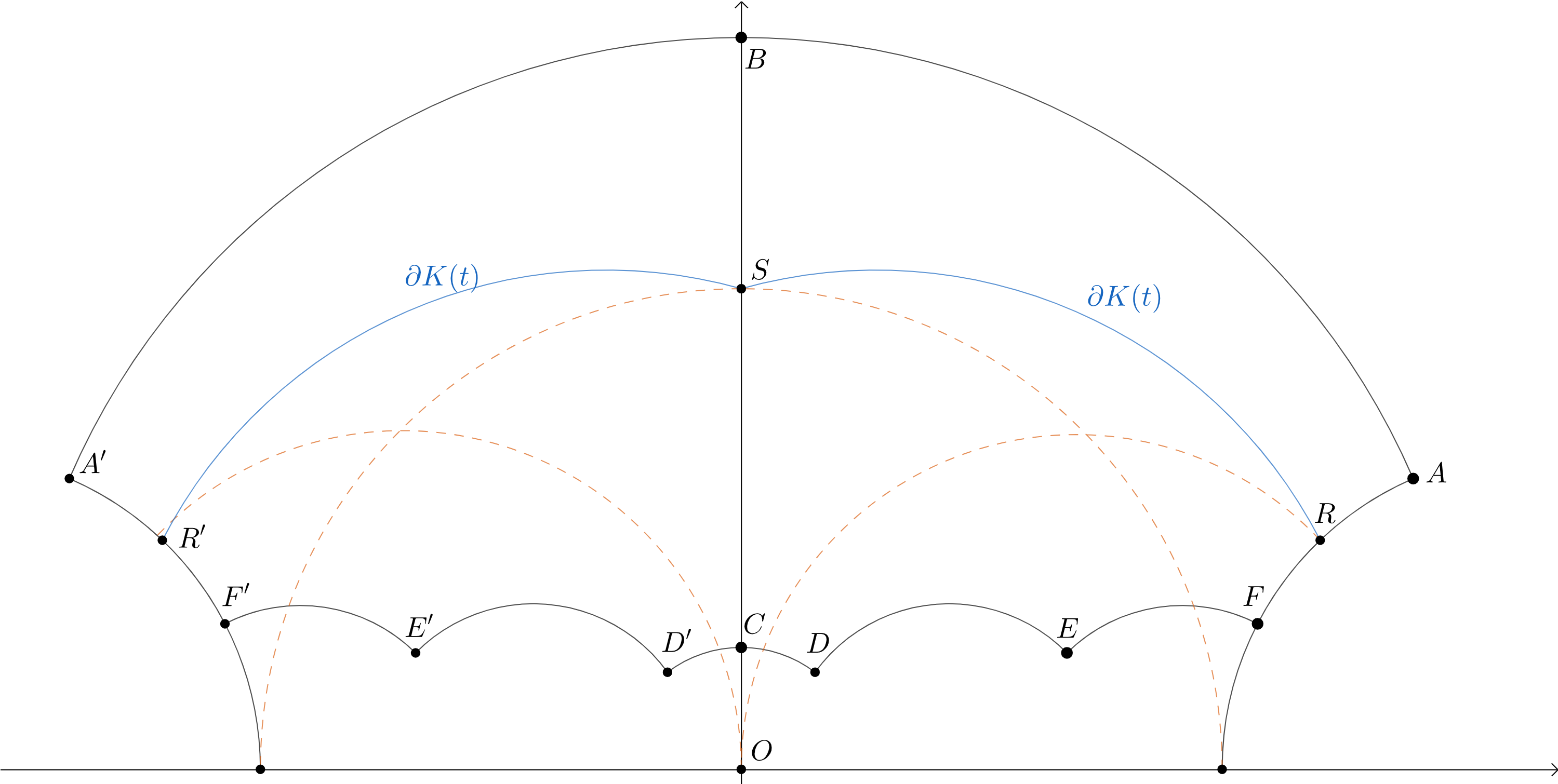}
\caption{A fundamental domain of Figure \ref{pairOfPants}}
\label{fundamentalDomain}
\end{figure}
\noindent
Hence, the distance between these two points satisfies:
\begin{align*}
2\left( \cosh d(R,S) -1 \right)&= \frac{\left( e^{l(\eta_t)}+1 \right)^2}{e^{l(\eta_t)/2}\left( e^{l(\eta_t)/2}+1 \right)^2},
\end{align*}
or, equivalently:
\begin{align*}
\cosh d(R,S)= \frac{2\cosh^2\left( \frac{l(\eta_t)}{4} \right) + \cosh^2 \left( \frac{l(\eta_t)}{2} \right)}{2\cosh ^2 \left( \frac{l(\eta_t)}{4} \right)}.
\end{align*}
Since the expression in the right-hand side decreases to $3/2$ as $l(\eta_t)$ goes to $0$, the distance between $R$ and $S$ remains larger than $\cosh^{-1} (3/2)$. In consequence, $\diam (K(t))$ is uniformly bounded for all $t$.
\begin{figure}[H]
\centering
\includegraphics[width=11.5cm, height=3.5cm]{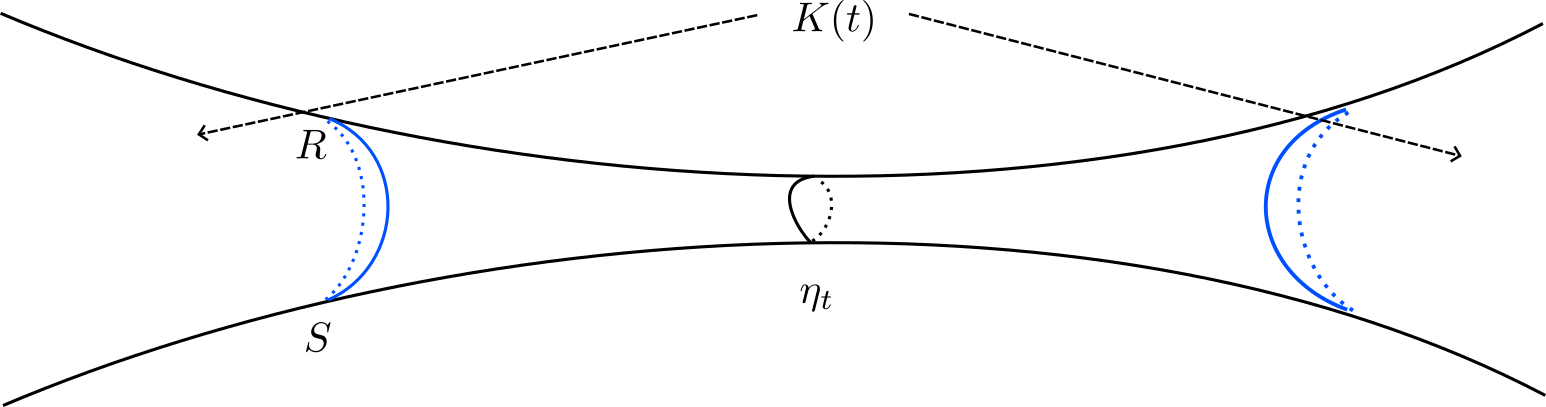}
\caption{The compact region $K(t)$ on the surface}
\label{desenK_t}
\end{figure}
The second part of the proposition can be deduced easily. Take $\eta$ a closed geodesic on $M$, different from $\eta_t$ or its integer powers. Since $M\setminus K(t)$ is topologically equivalent to $\eta_t$, $\eta$ must intersect the interior of $K(t)$, otherwise it would have the same free homotopy class as an integer power of $\eta_t$. Hence, we can follow the reasoning from Lemma \ref{marginireGeodeziceCompact}, the only difference being that instead of an arbitrary point, we take $q\in \eta \cap K(t)$. 
\end{proof}

\begin{lemma}\label{marginireGeodeziceNonCompact}
Let $M= \G \setminus \HH$ be a complete hyperbolic surface of finite area (with cusps). Then $M$ can be decomposed in a compact subsurface $K$ and a finite number of cusps, by cutting along horocycles of length $2$, exactly one for each cusp. Moreover, the infinite part of each cusp is isometric to the quotient of $\{ z\in \HH : 1<\Im (z) \text{ and } 0\leq \Re (z) < 2\}$ by $z\mapsto z+2$. Therefore $L(r) < Ce^r$, where $C$ depends only on $\diam(K)$ and the area of $M$.
\end{lemma}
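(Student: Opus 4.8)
The plan is to combine the standard thick--thin structure of a finite-area hyperbolic surface with an orbit-counting estimate of the same flavour as the one in Lemma~\ref{marginireGeodeziceCompact}, the new point being that the Dirichlet fundamental domain is now non-compact, so the counting has to be run on a compact piece of it. For the structural part: since $\G\setminus\HH$ has finite area it has finitely many ends, each a cusp, and each cusp is stabilised by a maximal parabolic subgroup of $\G$. Conjugating in $\psl$ one may assume this subgroup is $\langle z\mapsto z+2\rangle$; Shimizu's lemma, applied to the group rescaled so that the generator becomes $z\mapsto z+1$, bounds below the lower-left entries of all remaining elements of $\G$, and the usual estimate on the image of a horoball under such an element shows that a horoball about $\infty$ is precisely invariant under the parabolic subgroup. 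Normalising, we take it to be $\{\Im z>1\}$, whose image in $M$ is isometric to $\{z\in\HH:\,1<\Im z,\ 0\le\Re z<2\}/\langle z\mapsto z+2\rangle$, bounded by a horocycle of length $2$; doing this for each of the finitely many cusps, and shrinking the horoballs a little if necessary so that the ones based at $\G$-inequivalent parabolic points are pairwise disjoint, yields the decomposition $M=K\sqcup(\text{cusps})$ with $K$ compact and with piecewise-smooth boundary. The only mildly delicate point here is the Shimizu-type estimate (and, if one insists on the precise value $2$ for the boundary horocycle, the corresponding normalisation).

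Next I would observe that every closed geodesic of $M$ meets the interior of $K$. A closed geodesic is the projection of the axis of a hyperbolic $\g\in\G$, and that axis is a complete geodesic of $\HH$ whose endpoints on $\partial\HH$ are fixed by no parabolic element; since no complete geodesic of $\HH$ lies inside a horoball, this axis avoids all $\G$-translates of the horoballs removed above, so the closed geodesic cannot be contained in the closed cusp region.

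For the count, fix $p_0\in K$, a lift $\tilde p_0\in\HH$, and let $D_{\tilde p_0}$ be the Dirichlet domain at $\tilde p_0$; set $K_0:=D_{\tilde p_0}\cap(\text{preimage of }K\text{ in }\HH)$, a compact set. Since each point of $D_{\tilde p_0}$ realises its distance to the orbit $\G\tilde p_0$, for $x,y\in K_0$ we get $d(x,y)\le d(x,\tilde p_0)+d(\tilde p_0,y)\le 2\diam(K)$, so $\diam(K_0)\le 2\diam(K)$. Let $\eta$ be a closed geodesic with $l(\eta)\le r$; by the previous step it contains a point $q\in K$, and lifting $q$ to $\tilde q\in K_0$ the corresponding lift of $\eta$ is the axis of a unique $\g\in\G$ with $d(\tilde q,\g\tilde q)=l(\eta)$, whence
\[
d(\tilde p_0,\g\tilde p_0)\ \le\ 2\,d(\tilde p_0,\tilde q)+l(\eta)\ \le\ 2\diam(K)+r .
\]
Distinct closed geodesics correspond to distinct $\G$-conjugacy classes, so $L(r)$ is bounded by the number of $\g\in\G$ with $d(\tilde p_0,\g\tilde p_0)\le r+2\diam(K)$. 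For each such $\g$ the set $\g K_0$ contains $\g\tilde p_0$ and has diameter $\le 2\diam(K)$, so it lies in the hyperbolic ball $B\big(\tilde p_0,\,r+4\diam(K)\big)$; the sets $\g K_0$ are pairwise disjoint, being contained in the disjoint tiles $\g D_{\tilde p_0}$, and each has area $\area(K)$. Comparing areas, using $\area\big(B(\tilde p_0,R)\big)=2\pi(\cosh R-1)$,
\[
L(r)\ \le\ \frac{2\pi\big(\cosh(r+4\diam(K))-1\big)}{\area(K)}\ <\ C\,e^{r},
\]
with $C$ depending only on $\diam(K)$ and $\area(K)$; since $\area(K)=\area(M)-2k$ and the number of cusps $k$ is constrained by $\area(M)$ through Gauss--Bonnet, $C$ depends only on $\diam(K)$ and $\area(M)$, as claimed.

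The step that genuinely differs from Lemma~\ref{marginireGeodeziceCompact} is this last one, and that is where I expect the only real obstacle: because the fundamental domain is non-compact, the crude ``count of translated tiles inside a ball'' picks up infinitely many parabolic elements of $\G$, and this is exactly what forces the passage to the compact core $K_0$ of the domain, together with the a priori fact, established in the second step, that closed geodesics never leave $K$.
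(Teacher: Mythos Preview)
Your counting argument is correct and more explicit than the paper's, which simply says to ``follow the argument from Lemma~\ref{marginireGeodeziceCompact}'' after observing that every closed geodesic meets $K$. The passage to the compact core $K_0 = D_{\tilde p_0}\cap\pi^{-1}(K)$ is exactly the refinement needed when the Dirichlet tiles are unbounded, and your final reduction from $\area(K)$ to $\area(M)$ via the Gauss--Bonnet bound on the number of cusps is fine. One small slip of phrasing: the axis of a hyperbolic element does not \emph{avoid} the horoballs --- it typically enters and exits them --- but by precise invariance and connectedness it cannot be \emph{contained} in any single one, which is what you actually need.

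The structural part is where you diverge from the paper, and where there is a genuine gap. Shimizu's lemma, with the parabolic normalised to $z\mapsto z+1$, yields a precisely invariant horoball $\{\Im z>1\}$ whose boundary horocycle has length~$1$, not~$2$; no renormalisation fixes this, since the length of the bounding horocycle of the maximal Shimizu horoball is intrinsic to the cusp. The length-$2$ horocycle bounds a strictly larger cusp neighbourhood, and showing \emph{that} region embeds is a sharper statement --- and it is the specific value invoked later in Proposition~\ref{marginireKernelG}. The paper obtains it by a different, pair-of-pants route: for the pair $P$ carrying the cusp, cut along the three orthogeodesics between boundary components to obtain two right-angled pentagons (degenerate hexagons). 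Lift one so that its two infinite sides are $\{\Re z=0\}$ and $\{\Re z=1\}$; the two remaining finite sides are semicircles perpendicular to the vertical sides and to each other, and since they are disjoint the sum of their Euclidean radii is less than~$1$, forcing the finite vertices on the vertical sides to lie below $\{\Im z=1\}$. Placing the two pentagons side by side gives a fundamental strip of width~$2$ for the parabolic $z\mapsto z+2$, with the compact part entirely below $\Im z=1$, and the horocycle there has length~$2$.
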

\begin{proof}
For simplicity we suppose the surface has only one cusp. Consider a decomposition into pairs of pants. The pair, say $P$, that contains the cusp will have a boundary of length $0$ and two other boundaries of positive length. Denote $\infty A$, $BC$ and $F\infty$ the lines that realize the distance between the boundary components, as in Figure \ref{desen_pantalon_cusp}.
\begin{figure}[H]
\centering
\includegraphics[width=10cm, height=3.5cm]{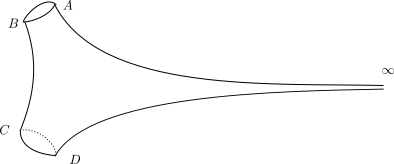}
\caption{The pair of pants $P$}
\label{desen_pantalon_cusp}
\end{figure}
\noindent
Cut the pair of pants along the orthogeodesics $\infty A$, $BC$ and $D\infty$ and consider one of the two resulting pentagons (degenerated hexagons). Moreover, take a lift of this pentagon say, $\tilde{A}\tilde{B}\tilde{C}\tilde{D}\infty$, into $\HH$ such that the line $\infty \tilde{A}$ is $\{ \Re(z)=0 \}$ and the line $\infty \tilde{D}$ is $\{ \Re(z)=1 \}$. Notice that $AB$ and $CD$ are both perpendicular on $BC$ thus the lines $\tilde{A}\tilde{B}$ and $\tilde{C}\tilde{D}$ must not intersect. But this happens only if the sum of the euclidean radii of the two half circles $\tilde{A}\tilde{B}$ and $\tilde{C}\tilde{D}$ is smaller than $1$. Hence, both $\tilde{A}$ and $\tilde{B}$ lie below the horocycle $\{ \Im(z)=1: \Re(z)\in[0,1] \}$. Making the same construction for the other pentagon we obtain a horocycle of length $2$ on our initial surface, delimiting a compact region, denoted $K$, from the cusp. As in the proof of Proposition \ref{marginireGeodeziceParametru} any closed geodesic must intersect $K$, thus we finish the proof by following the argument from Lemma \ref{marginireGeodeziceCompact}.
\end{proof}


\section{Trace formula on hyperbolic surfaces}

The trace formula was first introduced by Selberg \cite{SelbergOriginal}, in 1957. In \cite{BolteStiepanSelbergForDirac}, Bolte and Stiepan prove the formula for the Dirac operator on a compact surface. In this section we give a self-contained elementary proof of the trace formula for the Dirac operator on complete oriented hyperbolic surfaces of finite area. We treat simultaneously both the compact and the non-compact case, paying more attention to the latter. This trace formula is not really new, in \cite{Warner}, Warner obtained a trace formula for arbitrary bundles over finite-volume rank one locally symmetric spaces. There are other authors who restrict their studies to special cases, e.g. in \cite{Hoffmann}, Hoffmann obtains a trace formula for the universal covering group of $\slinear_2(\RR)$. From these results, Theorem \ref{selbergTraceFormula} would follow by translating the languages and observing that most ingredients in the general formula vanish. However, the general proof is complicated and long, mostly because one has to deal with the possible continuous spectrum. Our proof is short and, we hope, easier to read.

Note that $\D^+ \D^-$ and $\D^- \D^+$ have the same spectrum on surfaces (in higher dimensions $0$ may have different multiplicities in the two spectra). So we shall only present the case of positive spinors, i.e. we will look at the operator $\D^-\D^+$. 

\begin{definition}\label{definitiePrimitiv}
Let $\G$ be a discrete subgroup of $\psl$. We say that $\mu\in\G$ is \emph{primitive} if it cannot be written as $\mu=\g^n$, for $n\geq 2$ and $\g\in\G$.
\end{definition}
\begin{remark*}
On a hyperbolic surface $M = \G \setminus \HH$, the set of closed, oriented geodesics is in one to one correspondence with the hyperbolic conjugacy classes of $\G$. Hence, by $l(\g)$ we denote the length of the unique geodesic associated to the class $[\g]\subset \G$.
\end{remark*}

\begin{theorem}\label{selbergTraceFormula}
Let $M=\G \setminus \HH$ be a finite area hyperbolic surface, with $k$ cusps. Suppose that $\varepsilon(\g)=-1$ for each primitive parabolic element $\g\in\G$. Consider $\{\lambda_j\}_{j\in {\mathbb{N}}}$ the eigenvalues of $\D^{-}\D^+$ on $M$ and choose a sequence $\{\xi_j \}_{j\in {\mathbb{N}}} \subset \CC$ such that $\lambda_j=\xi_j^2$. Let $\phi \in C^{\infty}_c(\RR)$ and define $a,v,u:\RR \longrightarrow \RR$ by:
\begin{align*}
& a(t)=\frac{\phi(t)}{\sqrt{t+4}};\\
&v(t)=4\cosh\left(\tfrac{t}{2} \right)\int_{0}^{\infty}a\left( 4 \sinh^2 \left(\tfrac{t}{2} \right)+y^2 \right)dy ; \\
&u(\xi)=\hat{v}(\xi)=\int_{\RR}v(t)e^{-i\xi t}dt.
\end{align*}
Then:
\begin{align*}
\sum_{j=0}^{\infty} u(\xi_j)=\frac{\area(M)}{4\pi}\int_{\RR} \xi u(\xi)\coth(\pi \xi)d\xi+\sum_{[\mu] } \sum_{n=1}^{\infty} \frac{l(\mu)\varepsilon^n(\mu)v(nl(\mu))}{2\sinh \left(\frac{nl(\mu)}{2}\right)}-k\log(2)v(0),
\end{align*}
where $[\mu]$ runs over all conjugacy classes of primitive, hyperbolic elements in $\G$.
\end{theorem}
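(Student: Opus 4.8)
The plan is to follow Selberg's original strategy, adapted to the spinor setting where the class function $\varepsilon$ records the spin holonomy. First I would construct, on the universal cover $\HH$ (or $\DD$), an $\varepsilon$-twisted integral kernel $k(z,w)$ for the operator $\phi(\D^-\D^+)$ built from the function $\phi$: one uses the fact that $\D^2$ is (up to the shift $1/4$ coming from the $-\tfrac{i}{2}$ zeroth-order terms in $(\ref{ecuatieDiractPatratRadial})$) conjugate to the scalar Laplacian, so its radial Green-type kernel is a point-pair invariant depending only on the hyperbolic distance $d(z,w)$; the substitution $t+4 \mapsto 4\sinh^2(d/2)+\dots$ together with the Abel-transform chain $\phi \rightsquigarrow a \rightsquigarrow v \rightsquigarrow u=\hat v$ is exactly the Selberg/Harish-Chandra transform that identifies $u(\xi_j)$ as the eigenvalue of $\phi(\D^-\D^+)$ on the eigenspinor with parameter $\xi_j$. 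I would then periodize: the kernel on $M$ is $K(z,w)=\sum_{\g\in\G}\varepsilon(\g)\,\g_*\,k(z,\g w)$, where the $\varepsilon(\g)$ appears because $\g$ acts on spinors through $\rho(\g)$ and, by the Proposition identifying $\nu$ with $\varepsilon$, this action contributes the sign $\varepsilon(\g)$ on the global frame $\tilde p$. Absolute convergence of this sum over $\G$ (uniformly on compacts) follows from the compact support of $\phi$ together with the lattice-point/geodesic-counting bounds of Lemma \ref{marginireGeodeziceNonCompact}.

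Next I would compute $\tr\big(\phi(\D^-\D^+)\big)$ in two ways. The spectral side is $\sum_j u(\xi_j)$; here the non-compact case requires justifying that $\phi(\D^-\D^+)$ is trace class and that its trace equals $\int_M \operatorname{tr} K(z,z)\,dz$ — this uses Bär/Moroianu discreteness (the hypothesis $\varepsilon(\g)=-1$ on primitive parabolics, equivalently invertibility of the boundary Dirac operator, via the Lemma in \S\ref{discreteSpectrumSection}) and decay of eigenspinors into the cusps, so there is no continuous spectrum and no Eisenstein/scattering contribution. The geometric side is obtained by organizing $\int_M \sum_{\g}\varepsilon(\g)\operatorname{tr}\big(\g_*k(z,\g z)\big)\,dz$ by conjugacy classes: the identity $\g=e$ gives the area term $\frac{\area(M)}{4\pi}\int_\RR \xi u(\xi)\coth(\pi\xi)\,d\xi$ (the $\coth$ replacing the Laplacian's $\tanh$ because of the half-integer spectral shift of the Dirac square); the hyperbolic classes give, after unfolding $\G\backslash\HH$ to $\langle\mu\rangle\backslash\HH$ and integrating over the cylinder around the closed geodesic, the sum $\sum_{[\mu]}\sum_{n\ge1}\frac{l(\mu)\varepsilon^n(\mu)v(nl(\mu))}{2\sinh(nl(\mu)/2)}$, where $\varepsilon(\g^n)=\varepsilon^n(\g)$ is used; and the elliptic classes are absent since $\G$ is torsion-free.

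The genuinely new and delicate step — and the one I expect to be the main obstacle — is the parabolic contribution. For each cusp the parabolic classes in $\G$ contribute a term that in the scalar Laplacian case produces the familiar divergent pieces cancelled against Eisenstein series. Here, because $\varepsilon=-1$ on each primitive parabolic $\g_0$, summing over $n\in\ZZ\setminus\{0\}$ inserts the alternating sign $(-1)^n=\varepsilon(\g_0^n)$, which turns the divergent harmonic-type sum into a conditionally convergent alternating series whose value is exactly $-\log 2$ times $v(0)$ per cusp; this is where the $-k\log(2)v(0)$ comes from. Carrying this out rigorously means regularizing the cusp integral (truncate the surface at height $Y$, isolate the parabolic sub-sum over the $\ZZ$-family $z\mapsto z+2n$ using the explicit cusp coordinates from Lemma \ref{marginireGeodeziceNonCompact}, evaluate $\int$ over the truncated cusp, and let $Y\to\infty$), showing the $Y$-dependent pieces cancel between the two cusps' contributions to the spectral and geometric sides, and that the surviving constant is the alternating-series value $\sum_{n\ge1}\frac{(-1)^{n+1}}{n}=\log 2$ multiplied by $v(0)$. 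A useful cross-check is that for $\phi$ supported near $0$ one can recover the heat-kernel normalization and confirm the sign and the factor $\log 2$ against Theorem \ref{heatTraceAsymptotic}. Once all four families of classes are accounted for and convergence is justified uniformly, equating the two expressions for $\tr\big(\phi(\D^-\D^+)\big)$ yields the stated identity.
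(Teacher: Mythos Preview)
Your overall strategy---construct a radial point-pair kernel on $\HH$ involving the parallel transport in $\tilde S$, periodise over $\G$, prove the resulting operator is trace class, and then compute $\int_M \tr G(z,z)$ by conjugacy classes---is exactly the paper's approach, and your identification of the identity and hyperbolic contributions is correct. Two points deserve correction.

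First, the kernel is not a scalar point-pair invariant multiplied by $\varepsilon(\g)$. The paper's kernel $G(z,z')=\sum_{\g}\phi(2\cosh d(\g^{-1}z,z')-2)\,\g_*\tau_{z'\to\g^{-1}z}$ is genuinely spinor-valued; the sign $\varepsilon(\g)$ only appears after you compute $\g_*\tau$ on the invariant frame along the axis (hyperbolic case) or horocycle (parabolic case), as in Lemma~\ref{calculSumaAlpha}. Your claim that $\D^2$ is ``conjugate to the scalar Laplacian'' is not how the paper proceeds and would not by itself produce the parallel-transport factor that carries $\varepsilon$.

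Second, and more importantly, your proposed treatment of the parabolic classes via truncation at height $Y$ and cancellation of $Y$-divergences against the spectral side is misguided here. There is no continuous spectrum, so the spectral side is simply $\sum_j u(\xi_j)$ with no $Y$-dependence to cancel anything. The paper's key observation (Proposition~\ref{marginireKernelG}) is that the periodised kernel $G$ is \emph{uniformly bounded} on $M\times M$, including high in the cusps: one applies Poisson summation to the parabolic sub-sum $\sum_{n\in\ZZ}(-1)^n\phi(\cdots)\tau$, and the hypothesis $\varepsilon=-1$ shifts the dual frequencies to $1-2m\neq 0$, so every term decays rapidly by stationary phase. Thus $\int_M G(z,z)$ converges absolutely with no truncation, the interchange of sum and integral is justified directly, and each unfolded parabolic orbital integral is computed exactly (Lemma~\ref{calculSumaAlpha}) to give $\varepsilon^n(\mu)v(0)/n$; summing the alternating harmonic series then yields $-\log(2)v(0)$ per cusp. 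Replace your truncation paragraph with this Poisson-summation boundedness argument and the proof goes through.
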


\subsection{Eigenspinors of Dirac on the hyperbolic plane}

Let $\tilde{S}$ be the spinor bundle on the universal covering of $M$. Consider the following smoothing operator:
\begin{align*}
\Phi : C^{\infty}(\mathbb D, \tilde{S}) \longrightarrow C^{\infty}(\mathbb D, \tilde{S}) &&
\Phi s (z) := \int_{\DD} \phi (2\cosh d(z,z')-2)\tau_{z'\mapsto z} s(z')dg_{\DD }(z'),\\
\end{align*}
where $\tau_{z'\mapsto z}$ is the parallel transport in $\tilde{S}$ with respect to $\nabla$ from $z'$ to $z \in \mathbb D$. Note that if $\sigma(z)$ is a positive spinor, then so is $\tau_{z'\mapsto z} \sigma(z')$. We want to show that $\Phi$ and $\D^-\D^+$ have the same eigenspinors. In order to do this, we need some preparations. Define $\PP$ to be the projector:
\begin{align*}
\PP : C^{\infty}(\tilde{S}) \longrightarrow C^{\infty}(\tilde{S}) && 
\PP \sigma(z) = \PP f\sigma^+ (z) :=\int_0^{1} f(ze^{2\pi it})dt \ \sigma^+(z),
\end{align*}
where $\sigma^+$ was defined in $(\ref{sectiuneGlobalaH2})$. We have three immediate properties:
\begin{itemize}
\item[1)] $\PP ^2 =P$;
\item[2)] $\PP \circ \F (0)=\F \circ \PP (0)$;
\item[3)] $\PP \circ \D^-\D^+ =\D^-\D^+\circ \PP$.
\end{itemize}
The first one is obvious. To see the second property, we parametrize the unit disk $(r,\theta) \mapsto z$ by polar coordinates, $(r,\theta) \in [0,1)\times [0,2\pi]$. Notice that $\sigma^+$ is parallel along the radii of the disk, hence $\tau_{z'\mapsto 0} f\sigma^+(z')=f(z')\sigma^+(0)$. The third identity follows from the fact that $\PP$ commutes with both $\frac{\partial
}{\partial r}$ and $\frac{\partial}{\partial \theta}$ (see $(\ref{ecuatieDiractPatratRadial})$).

Therefore, if $f\sigma^+$ is an eigenspinor for $\D^-\D^+$, then so is $\PP f\sigma^+$. Moreover, combining the two formulae from $(\ref{ecuatieDiractPatratRadial})$, $h=\PP f$ is a solution of the second order differential equation:
\begin{align}\label{ecuatiODE2}
\left( -\frac{2-r^2}{4} - \frac{(1-r^2)^2}{4r} \frac{\partial}{\partial r} - \frac{(1-r^2)^2}{4}\frac{\partial^2}{\partial r ^2} \right) h = \lambda h,
\end{align}
where $h(r)$ is smooth on $[0,1)$. If we denote $h'$ by $\tilde{h}$, then $(\ref{ecuatiODE2})$ is equivalent to the system:
\begin{align*}
\renewcommand\arraystretch{1.2} 
\begin{bmatrix}
0 & 1\\
-\frac{2 +4\lambda -r^2}{(1-r^2)^2} & -\frac{1}{r} \\
\end{bmatrix}
\begin{bmatrix}
h\\
\tilde{h}
\end{bmatrix} = 
\begin{bmatrix}
h' \\
\tilde{h}' 
\end{bmatrix}.
\end{align*}
Therefore, the Wronskian is a solution of:
\[
w' + \frac{w}{r} =0,
\]
with the general solution:
\[
w=\frac{1}{r} \cdot c, 
\]
where $c$ is a constant.
\begin{lemma} \label{spaceDimension}
The space of solutions of $(\ref{ecuatiODE2})$ which are smooth at $r=0$ has dimension $1$.
\end{lemma}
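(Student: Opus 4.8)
The plan is to analyze the second-order ODE $(\ref{ecuatiODE2})$ near its singular point $r=0$ and show that exactly one of the two independent solutions extends smoothly. First I would rewrite the equation in the standard form for a regular singular point: multiplying through by $-4/(1-r^2)^2$ turns $(\ref{ecuatiODE2})$ into
\begin{align*}
h'' + \frac{1}{r}h' + \left( \frac{2-r^2}{(1-r^2)^2} + \frac{4\lambda}{(1-r^2)^2} \right)h = 0,
\end{align*}
so near $r=0$ the coefficient of $h'$ has a simple pole with residue $1$ and the coefficient of $h$ is analytic. Thus $r=0$ is a regular singular point with indicial equation $\rho(\rho-1)+\rho = \rho^2 = 0$, giving a double root $\rho=0$. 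By the Frobenius theory, one solution is analytic at $0$ (the $\rho=0$ solution, $h_1(r) = 1 + O(r^2)$, which exists since the ODE only involves even powers after expansion), and the second independent solution has the form $h_2(r) = h_1(r)\log r + (\text{analytic})$, hence is not smooth — indeed not even bounded in derivative — at $r=0$.

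The key steps in order: (1) confirm $r=0$ is a regular singular point and compute the indicial equation, obtaining the double root $\rho = 0$; (2) invoke the Frobenius/Fuchs theorem for a double indicial root to conclude the solution space is spanned by one analytic solution $h_1$ and one solution $h_2 = h_1 \log r + \sum_{n\ge 1} b_n r^n$ with a genuine $\log r$ term; (3) argue the log coefficient is nonzero — equivalently, use the Wronskian computation already carried out in the excerpt: since $w = c/r$, picking the analytic solution $h_1$ (for which $w$ would have to involve $h_1 h_2' - h_1' h_2$), a nonzero Wronskian forces $h_2$ to have the $\log$ singularity, so it cannot be smooth; (4) conclude the space of solutions smooth at $r=0$ is one-dimensional, spanned by $h_1$.

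Alternatively, and perhaps more cleanly in this setting, one can avoid Frobenius machinery entirely and argue directly from the first-order system and the Wronskian: any solution pair $(h,\tilde h)$ has Wronskian $w = h\,\tilde h_{\text{other}} - \cdots = c/r$. If two linearly independent solutions were both smooth at $r=0$, their Wronskian would be continuous (in fact smooth) at $r=0$, contradicting $w = c/r$ unless $c=0$; but $c=0$ forces the two solutions to be linearly dependent. Since at least one smooth solution exists — this follows because the operator in $(\ref{ecuatiODE2})$ has analytic coefficients in $r^2$ and one can solve the recursion for a power series $h = \sum a_m r^{2m}$ with $a_0 = 1$, the recursion being solvable at every step precisely because the indicial root is $0$ — the solution space smooth at $r=0$ is exactly one-dimensional.

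The main obstacle is step (3)/the nonvanishing of $c$: one must rule out the degenerate possibility that the second solution is also smooth, i.e. that the logarithmic term is absent. The Wronskian identity $w = c/r$ handles this neatly provided one checks $c \neq 0$ for the actual pair of solutions in question, which amounts to verifying that a nonzero smooth solution $h_1$ is not simultaneously accompanied by a second smooth solution — precisely the content of the $1/r$ blow-up of $w$. I would present the argument via the Wronskian rather than spelling out the Frobenius expansion, since the excerpt has already computed $w = c/r$ and this is the shortest route.
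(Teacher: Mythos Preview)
Your argument is correct. For the upper bound (dimension $\leq 1$) you use exactly what the paper uses: the Wronskian $w=c/r$ blows up at $r=0$, so two independent solutions cannot both be smooth there. The difference lies in how you establish existence of one smooth solution. You appeal to Frobenius theory (or a direct power-series recursion) after identifying $r=0$ as a regular singular point with indicial equation $\rho^2=0$; this is valid and self-contained from an ODE standpoint. The paper instead exhibits a concrete smooth solution geometrically: it checks that $z\mapsto (\Im z)^{\xi}\sigma^+$ is an eigenspinor of $\D^-\D^+$ on $\HH$ with eigenvalue $-(\tfrac{1}{2}-\xi)^2$, then pulls it back to $\DD$ and applies the averaging projector $\PP$ to produce a radial eigenspinor, hence a smooth solution of $(\ref{ecuatiODE2})$. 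The paper's route has the advantage that this explicit eigenspinor is needed anyway in the next proposition to compute $u(\xi)=\hat v(\xi)$, so it is not additional work; your route avoids invoking any geometry and would stand on its own even without the surrounding context.
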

\begin{proof}
Since the Wronskian blows up at $0$, the dimension is at most $1$. Thus, it is enough to find one smooth solution. A direct computation tells us that the section $x+iy \mapsto y^{\xi}s^+$ is an eigenspinor of $\D^-\D^+$ of eigenvalue $-(\frac{1}{2}-\xi)^2$ on the upper half-plane $\mathbb H$. Pulling this section back on $\DD$ and applying $\PP$ finishes the lemma.
\end{proof}
We are now ready to prove the following proposition:
\begin{proposition}\label{propositionSameEigenspinors}
For any $\xi \in \CC$ there exists $u(\xi)\in \CC$ such that if\begin{align*}
f:\DD \longrightarrow \CC &&
\D^-\D^+ (f\sigma^+) = \xi^2 f\sigma^+,
\end{align*}
then
\[
\F(f\sigma^+) = u(\xi) f\sigma^+,
\]
for the function $u$ appearing in the statement of Theorem $\ref{selbergTraceFormula}$.
\end{proposition}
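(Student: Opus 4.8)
The plan is to use the rotational symmetry about a single point of $\DD$ together with Lemma \ref{spaceDimension} to reduce the identity to a computation at the centre $0\in\DD$, and then to spread that computation to all of $\DD$ by means of the isometry invariance of the kernel defining $\F$. Two structural remarks come first. Since parallel transport commutes with the chirality operator, $\tau_{z'\mapsto z}$ sends positive spinors to positive spinors, so $\F(f\sigma^+)$ is again of the form $g\sigma^+$ with $g\in C^\infty(\DD)$. And the point-pair kernel $\phi(2\cosh d(z,z')-2)\tau_{z'\mapsto z}$ is invariant under the diagonal action of the orientation-preserving isometries of $\DD$, so $\F$ commutes with $\g_*$ for every such isometry $\g$; as $\D$ is natural it also commutes with $\g_*$, and since $\g_*$ preserves the chirality splitting, $\D^-\D^+$ commutes with $\g_*$ as well. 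Consequently the set $E_\xi:=\{\,g\in C^\infty(\DD): \D^-\D^+(g\sigma^+)=\xi^2 g\sigma^+\,\}$ is preserved, in the obvious sense, by all $\g_*$.

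Next I would evaluate $\F(f\sigma^+)$ at $0$. Since $\sigma^+$ is parallel along the radii of $\DD$ we have $\tau_{z'\mapsto 0}\sigma^+(z')=\sigma^+(0)$, hence
\begin{align*}
\F(f\sigma^+)(0)=\left(\int_{\DD}\phi\big(2\cosh d(0,z')-2\big)\,f(z')\,dg_{\DD}(z')\right)\sigma^+(0).
\end{align*}
In geodesic polar coordinates $(\rho,\theta)$ about $0$ one has $dg_{\DD}=\sinh\rho\,d\rho\,d\theta$, $d(0,z')=\rho$, and the angular integral of $f$ over the circle of radius $\rho$ equals $2\pi\,\PP f$. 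Now $\PP f$ is rotation invariant, smooth at $0$, and, by property 3) of $\PP$ together with the derivation of $(\ref{ecuatiODE2})$, a solution of $(\ref{ecuatiODE2})$ for the eigenvalue $\xi^2$; Lemma \ref{spaceDimension} forces $\PP f=(\PP f)(0)\,h_0=f(0)\,h_0$, where $h_0$ is the unique such solution normalised by $h_0(0)=1$ --- the normalisation being legitimate because the pull-back to $\DD$ of the eigenspinor $y^\xi\sigma^+$ of $\HH$ used in the proof of Lemma \ref{spaceDimension} is a nonzero multiple of $\sigma^+(0)$ at the origin. Therefore
\begin{align*}
\F(f\sigma^+)(0)=u(\xi)\,f(0)\,\sigma^+(0),&&u(\xi):=2\pi\int_0^\infty\phi\big(2\cosh\rho-2\big)\,h_0(\rho)\,\sinh\rho\,d\rho,
\end{align*}
and $u(\xi)$ depends on $\xi$ alone.

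To pass from $0$ to an arbitrary $z_0\in\DD$, choose an orientation-preserving isometry $\g$ with $\g 0=z_0$ and put $\tau:=\g_*^{-1}(f\sigma^+)$. By the structural remarks $\tau$ is a positive spinor with $\D^-\D^+\tau=\xi^2\tau$, say $\tau=\tilde f\sigma^+$ with $\tilde f\in E_\xi$, so the previous step gives $\F\tau(0)=u(\xi)\tau(0)$; applying $\g_*$ and using $\F\g_*=\g_*\F$,
\begin{align*}
\F(f\sigma^+)(z_0)=\big(\g_*\F\tau\big)(z_0)=\g_*\big(\F\tau(0)\big)=u(\xi)\,\g_*\big(\tau(0)\big)=u(\xi)\,(f\sigma^+)(z_0).
\end{align*}
This establishes $\F(f\sigma^+)=u(\xi)f\sigma^+$ with a scalar $u(\xi)$ independent of the chosen eigenspinor. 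It then remains to check that this $u(\xi)$ is the function $u=\hat v$ built from $\phi$ in Theorem \ref{selbergTraceFormula}; this is the classical Selberg--Harish-Chandra transform identity adapted to the Dirac setting. Rather than compute $h_0$ in closed form, I would apply $\F$ directly to the eigenspinor $y^\xi\sigma^+$ of $\HH$ and evaluate at $i$, where $y=1$ and hence $\F(y^\xi\sigma^+)(i)=u(\xi)\sigma^+(i)$ by the scalar already found; reading off the $\sigma^+(i)$-component of the defining integral $\int_{\HH}\phi(2\cosh d(i,z')-2)(y')^\xi\tau_{z'\mapsto i}\sigma^+(z')\,dg_{\HH}(z')$ and using the identity $2\cosh d(i,z')-2=4\sinh^2(t/2)+y^2$ in the appropriate horocyclic coordinates turns it into the iterated integral that defines first $a$, then $v$, and finally $u=\hat v$. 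I expect the main obstacle to be precisely this bookkeeping --- the coordinate changes, the Jacobian factors $4\cosh(t/2)$ and $\sqrt{\,\cdot\,+4}$, and the explicit form of $\tau_{z'\mapsto i}\sigma^+$ in the half-plane frame --- whereas the structural content, that $\F$ acts as a scalar on $E_\xi$, is forced by Lemma \ref{spaceDimension} and the symmetry of the kernel.
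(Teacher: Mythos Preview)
Your approach is essentially the paper's own: reduce to the origin via the isometry-equivariance of the kernel, invoke Lemma~\ref{spaceDimension} on the radialisation $\PP f$ to get a scalar, and then identify that scalar with $\hat v(\xi)$ by evaluating $\F$ on the explicit half-plane eigenspinor at $i$. One small correction for the final computation: the power you want is $y^{-i\xi+1/2}$ (eigenvalue $\xi^2$), not $y^{\xi}$, and after the substitutions $X=x/\sqrt{y}$, $y=e^t$ the point-pair invariant becomes $X^2+4\sinh^2(t/2)$ rather than $4\sinh^2(t/2)+y^2$; with these fixes the bookkeeping you flag goes through exactly as in the paper.
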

\begin{proof}
First, we claim that is enough to prove the equality at $0$. Notice that $\g_*$ preserves Clifford multiplication. Moreover, since it is an isometry, $(\g^{-1})^*$ preserves the connection $\nabla$ and thus, the parallel transport. Combining these facts we get that
\begin{align}\label{eigenspinoriPhi}
\D^-\D^+\g_*(f\sigma^+) = \g_*\D^-\D^+f\sigma^+;
&& \F f\sigma^+ (z) = \g_*\left( \F \g^{-1}_*(f\sigma^+) (0) \right).
\end{align} 
We will now see why the equality occurs at the origin. Note that, from the previous lemma, $\PP f\sigma^+$ is completely determined by its value at $0$. Thus:
\begin{align*}
\F f\sigma^+(0) &= \PP \F f\sigma^+ (0) = \F \PP f\sigma^+ (0) 
=: u(\xi)f\sigma^+ (0).
\end{align*}
An elementary computation tells us what the parallel transport is:
\begin{align}\label{calculTransportParalel}
\tau_{z\mapsto w} = -i \frac{z-\overline{w}}{|z - \overline{w}|}.
\end{align}
With this in mind, for the final part of the proposition, we move everything on the upper half plane $\HH$ and carry out the computation there. It is enough to compute the value of $\F$ on a fixed eigenfunction, specifically $x+iy\mapsto y^{-i\xi + 1/2}\sigma^+$: 
\begin{align*}
\F \left( y^{-i\xi+1/2} \sigma^+ \right)(i) &= \int_{\HH} \phi \left( 2\cosh d(i, z') -2 \right) \tau_{z' \mapsto i} \left(  \Im (z')^{-i\xi+1/2}\sigma^+ \right)  dg_{\HH}(z') \\
&=\sigma^+(i) \int_0^{\infty}\int_{\RR} \phi \left( \frac{x^2+(y-1)^2}{y} \right) y^{-3/2-i\xi}\frac{1+y}{\sqrt{x^2 + (y+1)^2}}  dxdy\\
&=\sigma^+(i) \int_0^{\infty}\int_{\RR} \phi \left( X^2 + \frac{(y-1)^2}{y} \right) y^{-1-i\xi} \frac{\frac{1+y}{\sqrt{y}}}{\sqrt{X^2+\frac{(y+1)^2}{y}}}dXdy \\
&=\sigma^+(i) \int_{\RR}\int_{\RR} \phi \left( X^2 + 4\sinh^2\left( \frac{t}{2} \right) \right) e^{-i\xi t} \frac{2\cosh \left( \frac{t}{2} \right)}{\sqrt{X^2 + 4 \cosh^2 \left( \frac{t}{2} \right)}} dX dt \\
& = \sigma^+(i)\hat{v}(\xi).
\end{align*}
Throughout the computation we have: 
\begin{itemize}
\item changed variables from $\{ x,y \}$ to $\{ X :=\frac{x}{\sqrt y}, y \}$;
\item changed variables $y = e^t$, $t \in \mathbb R$. \qedhere
\end{itemize} 
\end{proof}

\subsection{A pretrace formula}
We want Proposition \ref{propositionSameEigenspinors} to hold true for an operator acting on spinors on $M$ as well. Thus let us define the kernel:
\begin{align}\label{kernelG}
G(z,z'):= \sum_{\g \in \G}\phi(2\cosh d(\g^{-1}z,z')-2)\g_* \tau_{z'\mapsto \g^{-1}z},
\end{align}
for $z,z'\in \HH$. It is locally finite, since $\phi$ has compact support. Note that the $G$ descends to $M\times M$, since for every $\alpha, \beta \in \G$:
\[
G(\alpha z, \beta z') = \alpha_* G(z,z')\beta^{-1}_*.
\]
Hence, it produces an operator:
\begin{align*}
\mathcal{G}:C^{\infty}(M,S) \longrightarrow C^{\infty}(M,S); && 
\mathcal{G}\sigma(z) = \int_M G(z,z')\sigma(z') dg(z').
\end{align*}
If $M$ is compact, the sum becomes globally finite. Otherwise, we claim that it is bounded.
\begin{proposition}\label{marginireKernelG}
The sum defining the kernel $G$ is uniformly bounded on $M\times M$.
\end{proposition}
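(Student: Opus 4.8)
The plan is to reduce the bound to a lattice--point count away from the cusps and, inside each cusp, to a single cancellation estimate. The starting observation is that every summand in $(\ref{kernelG})$ has operator norm exactly $|\phi(2\cosh d(\g^{-1}z,z')-2)|$: indeed $\g_*$ comes from an isometry of $\HH$, so it preserves Clifford multiplication and the spin connection, while $\tau_{z'\mapsto\g^{-1}z}$ is a parallel transport, so both are fibrewise unitary. Setting $\rho:=\cosh^{-1}\!\big(1+\tfrac12\sup\supp\phi\big)$, only the $\g$ with $d(z,\g z')\le\rho$ contribute, so $\|G(z,z')\|\le\|\phi\|_\infty\cdot\#\{\g\in\G:d(z,\g z')\le\rho\}$. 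When $M$ is compact this count is uniformly bounded by a volume argument; on a cusped surface it is \emph{not}. If $z,z'$ both sit at Euclidean height $y$ in a cusp, normalised so the cusp subgroup is $\G_\infty=\langle w\mapsto w+\ell\rangle$, the translates $w\mapsto w+n\ell$ carrying $z'$ to within distance $\rho$ of $z$ number $\asymp y/\ell$ (up to a constant depending on $\phi$), which blows up. So uniform boundedness must come from cancellation, and this is exactly where the hypothesis $\varepsilon=-1$ on primitive parabolics enters.

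Next I would organise the sum geometrically. Fix pairwise disjoint horoballs $\mathcal H_1,\dots,\mathcal H_k\subset\HH$, one around each cusp, small enough that $\g\mathcal H_i\cap\mathcal H_j=\varnothing$ unless $i=j$ and $\g\in\G_i$; let $K$ be the compact complement of their images in $M$. Using the height--drop property of cusps (for $\g$ not stabilising $\infty$ one has $\Im(\g w)\le (c_0^2\,\Im w)^{-1}$, where $c_0>0$ is the least nonzero modulus of a lower--left matrix entry occurring in $\G$), the contributing $\g$ split into: a ``thick'' part, for which $\g z'$ is forced into a fixed compact region where the injectivity radius is bounded below -- there the number of such $\g$ is uniformly bounded; and, for each cusp $j$, finitely many ``cuspidal blocks'', each of the shape $\sum_{\g\in\delta\G_j}(\dots)$ for a fixed $\delta\in\G$. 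That only boundedly many cosets $\delta\G_j$ occur for a given $z$ follows from the bounded packing of the disjoint horoballs $\delta\mathcal H_j$ near $z$. This reorganisation is bookkeeping; the content is in the blocks.

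The heart of the proof is the bound for one cuspidal block. Normalising the cusp to $\infty$ with $\G_\infty=\langle\g_0:w\mapsto w+\ell\rangle$, such a block reads $\sum_{n\in\ZZ}(-1)^{n}c_n A_n$, where: the sign is $\varepsilon(\g_0^{\,n})=\varepsilon(\g_0)^n=(-1)^n$, by non--triviality and the multiplicativity of $\varepsilon$ on powers; $c_n=\phi\!\big(\tfrac{(x-x'-n\ell)^2+(y-y')^2}{yy'}\big)$ is a real scalar, and since $n\mapsto d(z,z'+n\ell)$ is unimodal the sequence $(c_n)$ has total variation at most $2\,\mathrm{TV}(\phi)$, uniformly in $z,z'$; and $A_n$ is fibrewise unitary, namely a parallel transport composed with the lift of the geometric translation by $n\ell$, which fixes the global frame and hence is the identity in the $\sigma^{\pm}$ trivialisation. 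From the explicit transport formula $(\ref{calculTransportParalel})$ -- transport along the connecting geodesic multiplies $\sigma^{\pm}$ by $e^{\pm i(\alpha_n-\alpha_n')/2}$, where $\alpha_n,\alpha_n'$ are the angles that geodesic makes with the reference frame at its two endpoints -- one finds $\alpha_n,\alpha_n'=\pm\tfrac\pi2+O(1/n)$ with second differences $O(1/n^2)$, hence $\|A_n-A_{n+1}\|=O(1/n^2)$. Summation by parts then gives
\begin{align*}
\Big\|\sum_{n\in\ZZ}(-1)^{n}c_n A_n\Big\|\ \le\ \sum_{n\in\ZZ}\big\|c_nA_n-c_{n+1}A_{n+1}\big\|+\|\phi\|_\infty\ \le\ 2\,\mathrm{TV}(\phi)+\|\phi\|_\infty\sum_{n\neq0}\frac{C}{n^2}+\|\phi\|_\infty,
\end{align*}
a constant depending only on $\phi$. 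Since there are finitely many cusps, finitely many blocks per pair, and the thick part is uniformly bounded, the proposition follows.

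The step I expect to be the real obstacle is precisely the estimate $\|A_n-A_{n+1}\|=O(1/n^2)$ -- or any bound summable over the relevant range. The scalars $c_n$ are supported on a window of length $\asymp\sqrt{yy'}$, so a mere $O(1/n)$ on the increments of the unitary factors would leave an unbounded $\log(yy')$ and destroy the uniformity; one genuinely needs the second--order cancellation in the rotation angle of parallel transport along the long geodesics crossing a cusp. Everything else -- the fibrewise unitarity of the $\g_*$ and of $\tau$, the lattice count in the thick part, the total--variation bound on $(c_n)$, and the summation by parts -- is routine once this one geometric estimate is in hand.
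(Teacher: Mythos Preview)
Your route is genuinely different from the paper's. The paper does not use Abel summation at all: once it has isolated the cuspidal block $\sum_{n\in\ZZ}(-1)^n\phi(\cdots)\tau_{z'\to z-n}$, it applies the Poisson summation formula. The sign $(-1)^n=e^{i\pi n}$ shifts the dual lattice by $1/2$, so the dual sum runs over phases $e^{i\pi X\sqrt{yy'}(1-2m)}$ with $m\in\ZZ$; since $1-2m\neq0$ (this is exactly where $\varepsilon=-1$ enters), repeated integration by parts gives $|F(m,y,y')|\lesssim|\sqrt{yy'}(1-2m)|^{-p}$ for every $p$, uniformly. No explicit transport angles are needed.

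Your approach can be made to work, but the specific estimate you stake it on is false in the uniform sense. In the $\sigma^{+}$ trivialisation the unitary factor is $A_n=(y+y'+iu_n)/|y+y'+iu_n|=e^{i\alpha_n}$ with $u_n=x-x'-n\ell$ and $\alpha_n=\arctan\!\big(u_n/(y+y')\big)$. Hence
\[
|\alpha_n-\alpha_{n+1}|\;\approx\;\frac{\ell(y+y')}{(y+y')^2+u_n^2},
\]
so your claimed $\|A_n-A_{n+1}\|=O(1/n^2)$ carries an implied constant $\asymp(y+y')/\ell$. At the edge of the support $|n|\asymp\sqrt{yy'}/\ell$ this gives increments of size $\asymp\ell/(y+y')$, not $\asymp\ell^2/(yy')$; the ``second--order cancellation'' you invoke does not survive uniformly in the height. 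This is precisely the blow--up you set out to exclude.

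The fix is much easier than a pointwise $O(1/n^2)$: the sequence $u_n$ is affine in $n$, so $\alpha_n$ is \emph{monotone} and takes values in $(-\pi/2,\pi/2)$. Therefore
\[
\sum_{n\in\ZZ}\|A_n-A_{n+1}\|\;\le\;\sum_{n\in\ZZ}|\alpha_n-\alpha_{n+1}|\;\le\;\pi
\]
uniformly in $z,z'$. Plugging this into your Abel summation together with the (correct) bound $\mathrm{TV}(c_n)\le 2\,\mathrm{TV}(\phi)$ gives the uniform estimate on each cuspidal block, and the rest of your organisation (finitely many cosets $\delta\G_j$, standard lattice count on the thick part) then closes the argument. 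So your proof becomes correct once the pointwise increment bound is replaced by this total--variation bound on $A_n$; the paper's Poisson method is slicker in that it never needs to identify the transport angle at all.
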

\begin{proof}
Let $F$ be a Dirichlet domain for $M$. We will show that the sum defining $G$ is uniformly bounded on $F\times F$. Take $r$ a positive real number large enough such that $\phi(2\cosh d(z,z')-2) = 0$ whenever $d(z,z')\geq r$. From Lemma \ref{marginireGeodeziceNonCompact} there exist unique horocycles of length $2$ delimiting a compact region of $K\subset M$ from the cusps. Take $\tilde{K}$ the lift of $K$ in $F$. For $\delta > 0$, denote by $K+\delta:=\{ p\in M : d(p,K)\leq \delta \}$ and by $\tilde{K}+ \delta:= \{ w\in \HH : d(w,K)\leq \delta \}$.

If $\pi(z') \in K+r$, then $z \mapsto \phi(2\cosh d(\g^{-1}z,z')-2)$ vanishes whenever $\g^{-1}F$ does not intersect the compact set $\tilde{K}+2r$. We claim that this compact set intersects at most a finite number (say $N$) of fundamental domains $\g^{-1}F$. Indeed, suppose there exists a sequence $(\g_n)_{n\in \NN}\subset \G$ such that $\g^{-1}_n F\cap \tilde{K}+2r \neq \emptyset$. There exists a sequence $(x_n)_{n\in \NN}\subset F$ such that $\g^{-1}_n F\cap \tilde{K}+2r \ni \g^{-1}_nx_n $ converges to $y\in \tilde{K}+2r$. Taking the projection onto $M$, we get that $K+2r \ni \pi(x_n)$ converges to $\pi(y) \in K+2r$. Moreover, since $\pi(x_n) \in K+2r$, it follows that $x_n \in F\cap\tilde{K}+2r$, thus $x_n$ converges to a point $x\in F\cap\tilde{K}+2r$. Notice that $x$ and $y$ have the same projection onto $M$, hence there exists $\g\in \G$ such that $\g y=x$, which in turn implies that $\g\g_n^{-1}x_n$ converges to $x$. But recall that $x_n$ also converges to $x$, and this contradicts the fact that $\G$ acts properly discontinuous. Therefore for every $z\in F$ and every $z'\in F$ with $\pi(z')\in K+r$, at most $N$ terms in the sum $(\ref{kernelG})$ do not vanish.

If $\pi(z')\notin K+r$, then $\pi(z')$ lies in a cusp and the distance between itself and the unique horocycle associated to the said cusp is at least $r$. Furthermore, we have shown in Lemma \ref{marginireGeodeziceNonCompact} that this cusp is embedded in a part of the vertical strip of width $1$. Hence $z \mapsto \phi(2\cosh d(\g^{-1}z, z')-2)$ does not vanish only if $z$ is above the horocycle and $\g$ is a translation. Thus, writing $z=x+iy$ and $z'=x'+iy'$, $G$ reads:
\begin{align*}
G(z,z')& =  \sum_{n \in \ZZ }\phi(2\cosh d(z-n,z')-2)(-1)^n\tau_{z'\mapsto z-n} \\
& = \sum_{n \in \ZZ } a\left( \frac{(x-n-x')^2 + (y-y')^2}{yy'} \right) e^{i\pi n} \frac{y+y' + i(x-n-x')}{\sqrt{yy'}},
\end{align*}
where $a$ is the function defined in Theorem \ref{selbergTraceFormula}. A fruitful way to regard this sum is by the Poisson summation formula:
\begin{align}\label{kernelGAfterPoisson}
G(z,z')& = \sum_{m\in \ZZ} \int_{\RR}a\left( \frac{(x-n-x')^2 + (y-y')^2}{yy'} \right) e^{i\pi n(1-2m)} \frac{y+y' + i(x-n-x')}{\sqrt{yy'}} dn.
\end{align}
Next, we take each term, denoted $F(m,y,y')$, separately and change the variable from $n$ to $X:=(n+x-x')(yy')^{-1/2}$:
\begin{align*}
F(m,y,y') = e^{i\pi(x-x')(1-2m)} \int_{\RR} a \left( X^2+\frac{(y-y')^2}{yy'} \right) e^{i\pi X \sqrt{yy'}(1-2m)} (y+y'-iX\sqrt{yy'})dX.
\end{align*}
Since $y$, $y'$ and $1-2m$ do not vanish, we claim that, for any $p$ a positive integer, $|\sqrt{yy'}(1-2m)|^p F(m,y,y')$ goes to $ 0$ as $|\sqrt{yy'}(1-2m)|$ goes to $\infty$. This result is a particular case of the stationary phase principle. Yet, it can be easily seen by integrating by parts: 
\begin{align*}
F(m,y,y')&=\frac{e^{i\pi(x-x')(1-2m)}}{i\pi \sqrt{yy'}(1-2m)} \cdot  \\
&\cdot \int_{\RR} a \left( X^2+\frac{(y-y')^2}{yy'} \right) \frac{\partial }{\partial X}\left( e^{i\pi X \sqrt{yy'}(1-2m)} \right) (y+y'-iX\sqrt{yy'})dX \\
&=-\frac{e^{i\pi(x-x')(1-2m)}}{i\pi \sqrt{yy'}(1-2m)} \cdot  \\   & \cdot \int_{\RR}\frac{\partial}{\partial X}\left( a \left( X^2+\frac{(y-y')^2}{yy'} \right) (y+y'-iX\sqrt{yy'})\right) e^{i\pi X \sqrt{yy'}(1-2m)} dX.
\end{align*}
Continue inductively $p$ more steps, and then notice that the integral is bounded because $a$ has compact support. Therefore, the sum $(\ref{kernelGAfterPoisson})$ is uniformly bounded in the cusp as well.
\end{proof}

\begin{remark*} The hypothesis $\varepsilon(\g) = -1$, for each $\g$ a parabolic, primitive isometry, means that the power of the exponential in the integral is a multiple of $1-2m$, which does not vanish since $m$ is an integer. This fact was essential in our proof.
\end{remark*}

Consider $\sigma_j \in C^\infty(M)$ an eigenspinor of $\D^-\D^+$ with eigenvalue $\xi^2_j$. Since the following diagram commutes: 
\[
\begin{tikzpicture}
  \matrix (m) [matrix of math nodes,row sep=3em,column sep=4em,minimum width=2em]
  {
      P_{\spin(n)}\HH \times_{\rho} \Sigma_2  & P_{\spin(n)}M\times_{\rho} \Sigma_2 \\
      \HH  &   M   \\
  };
  \path[-stealth]
    (m-1-1) edge node [] {} (m-1-2)
    (m-1-2)	edge node [] {} (m-2-2)
    (m-2-1)	edge node [] {} (m-2-2)
    (m-1-1) edge node [] {} (m-2-1);
\end{tikzpicture}
\]
we can take $\tilde{\sigma}_j$ a lift of $\sigma_j$. Using Proposition \ref{propositionSameEigenspinors} and changing variables we get:
\begin{align*}
u(\xi_j)\tilde{\sigma}_j (z) &= \F \tilde{\sigma}_j(z) = \sum_{\g \in \G} \int_{F}   \phi(2\cosh d(\g^{-1}z,z')-2)\g_* \tau_{z'\mapsto \g^{-1}z}\tilde{\sigma}_j(z') dg_{\HH}(z')\\
&= \int_F G(z,z')\tilde{\sigma}_j(z') dg_{\HH}(z') = \mathcal{G}\sigma_j(z),
\end{align*}
where $F$ is a fundamental domain of our surface $M$. Proposition \ref{marginireKernelG} allowed us to commute the integral with the sum. This means that the operator $\mathcal{G}$ has the same eigenspinors and the same eigenfunctions as the operator $\F$. Next we prove an adapted version of Mercer's theorem:
\begin{proposition}\label{traceClass}
\[
\tr \mathcal{G} = \int_{M}G(z,z)dg_M(z).
\]
\end{proposition}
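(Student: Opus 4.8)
The plan is to establish that $\mathcal{G}$ is a trace-class operator whose trace equals the integral of its kernel along the diagonal, by reducing to the spectral decomposition of $\mathcal{G}$ together with uniform control on $G$. We already know from Proposition \ref{marginireKernelG} that $G$ is uniformly bounded on $M\times M$, and from the pretrace computation that $\mathcal{G}\sigma_j = u(\xi_j)\sigma_j$ for each eigenspinor $\sigma_j$ of $\D^-\D^+$. Since $\D^-\D^+$ has discrete spectrum (by the non-triviality hypothesis, cf. Section \ref{discreteSpectrumSection}), the eigenspinors $\{\sigma_j\}$ form an orthonormal basis of $L^2(M,S)$, and $\mathcal{G}$ acts diagonally in this basis. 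The first step is therefore to show $\sum_j |u(\xi_j)| < \infty$, which combined with diagonalizability gives that $\mathcal{G}$ is trace-class with $\tr\mathcal{G} = \sum_j u(\xi_j)$.

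The key inputs for summability of $u(\xi_j)$: the eigenvalues $\xi_j^2$ of $\D^-\D^+$ obey a Weyl law (counting asymptotics as in Theorem \ref{uniformWeylLaw}, or more simply the non-uniform version which already suffices here), so $|\xi_j| \to \infty$ at a controlled polynomial rate; meanwhile $u = \hat v$ is the Fourier transform of the compactly supported smooth function $v$ built from $\phi \in C^\infty_c(\RR)$, hence $u$ is rapidly decreasing, $|u(\xi)| \le C_N(1+|\xi|)^{-N}$ for every $N$ on the real axis, with appropriate control on the relevant complex values $\xi_j$ (the finitely many possibly-imaginary $\xi_j$ coming from small eigenvalues cause no trouble). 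Combining these, $\sum_j |u(\xi_j)| < \infty$, so $\mathcal{G}$ is trace-class.

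Next I would identify this trace with $\int_M G(z,z)\,dg_M(z)$. The clean route is to use that $\mathcal{G}$ is a smoothing operator — its kernel $G$ is smooth (since $\phi$ is smooth and parallel transport depends smoothly on the endpoints away from the cut locus, and $\phi$ is supported where $d(z,z')$ is small) — and that on a complete manifold a smoothing operator with uniformly bounded kernel that is additionally trace-class has trace given by the diagonal integral. Concretely, write the kernel via the spectral expansion $G(z,z') = \sum_j u(\xi_j)\,\sigma_j(z)\otimes \sigma_j(z')^*$ (convergence in $L^2(M\times M)$ first, then, using the bound on $G$ and elliptic regularity for $\D^-\D^+$, uniformly on compact subsets), set $z=z'$, integrate over $M$, and exchange sum and integral — justified by absolute convergence of $\sum_j |u(\xi_j)| \int_M |\sigma_j(z)|^2\,dg = \sum_j |u(\xi_j)| < \infty$ since each $\sigma_j$ is $L^2$-normalized. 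This yields $\int_M G(z,z)\,dg_M(z) = \sum_j u(\xi_j) = \tr\mathcal{G}$.

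The main obstacle is the non-compactness of $M$: one must ensure the spectral expansion of $G$ converges pointwise (and uniformly enough to integrate term-by-term) all the way into the cusps, where the $\sigma_j$ are not uniformly bounded a priori. The remedy is exactly the cusp analysis already carried out in Proposition \ref{marginireKernelG}: deep in each cusp $G(z,z')$ only receives contributions from parabolic translations, and the $\varepsilon(\g)=-1$ hypothesis forces the relevant Poisson-summed series to decay, so $G(z,z)$ is not merely bounded but in fact integrable over the cusp (indeed one checks $G(z,z) \to 0$ as $z$ escapes the cusp, making $\int_M G(z,z)\,dg_M$ absolutely convergent). With that in hand, a standard approximation argument — exhaust $M$ by compact subsurfaces $K_\delta$, apply Mercer's theorem on each $K_\delta$ to the restricted kernel, and pass to the limit using the uniform bound and integrability — completes the identification.
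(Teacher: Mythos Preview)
Your argument is correct and close in spirit to the paper's, but the two differ in how trace-class is established. The paper writes $\mathcal{G} = (1+\D^-\D^+)^{-1}\cdot (1+\D^-\D^+)\mathcal{G}$ and shows that each factor is Hilbert--Schmidt: for the second factor one computes $\|(1+\D^-\D^+)\mathcal{G}\|_{\HS}^2 = \sum_j (1+\xi_j^2)^2 u(\xi_j)^2 < \infty$ using the Weyl asymptotics $\xi_j \sim j^{1/2}$ (quoting Moroianu's generalised Weyl law) together with the Schwartz decay of $u$, while $(1+\D^-\D^+)^{-1}$ is Hilbert--Schmidt by the same asymptotics. Having written $\mathcal{G}$ as a product of two Hilbert--Schmidt operators, the paper concludes trace-class and then simply asserts that ``the conclusion follows''. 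Your direct summation $\sum_j |u(\xi_j)| < \infty$ is equally valid and arguably more elementary, since it bypasses the Hilbert--Schmidt factorisation; conversely, the paper's approach is a tidy way to invoke standard trace-ideal machinery without explicitly diagonalising. Your treatment of the Mercer-type identification (spectral expansion of $G$, uniform convergence on compacta via elliptic regularity, Fubini justified by $\sum_j |u(\xi_j)|<\infty$, and the cusp-integrability check drawing on Proposition~\ref{marginireKernelG}) is considerably more detailed than the paper's, which leaves that step entirely implicit.
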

\begin{proof}
Notice that since $G$ is bounded on $M\times M$, it follows that $\mathcal{G}$ acts on $L^2$ sections. We claim that $(1+\D^-\D^+)\mathcal{G}$ is a Hilbert-Schmidt operator. Indeed, computing the square of its Hilbert-Schmidt norm for the basis $\{ \sigma_j \}_{j\in \NN}$ we get:
\[
\| (1+\D^-\D^+) \mathcal{G} \|^2_{HS} = \sum_{j=0}^{\infty} \| (1+\D^-\D^+) \mathcal{G} \sigma_j \|^2_{L^2} = \sum_{j=0}^{\infty} (1+\xi_j^2)^2u^2(\xi_j) <\infty.
\]
The last inequality holds because $u$ is Schwartz and the spectrum of the Dirac operator satisfies $\xi_j \sim j^{1/2}$, a generalised version of Weyl's law (see \cite[Theorem 3]{Moroianu}) on open manifolds. Similarly, one can easily see that $(1+\D^-\D^+)^{-1}$ is Hilbert-Schmidt as well. Thus $\mathcal{G}$ is of trace class and the conclusion follows.
\end{proof}

Thus, starting from Proposition \ref{traceClass} we can rewrite the integral along the diagonal as follows:

\begin{align}\label{pretraceFormula}
\begin{split}
\sum_{j=0}^{\infty} u(\xi_j) &= \int_{M}G(z,z)dg_M(z) = \int_{F}\sum_{\g \in \G}\phi(2\cosh d(\g^{-1}z,z)-2)\g_* \tau_{z\mapsto \g^{-1}z} dg_{\HH}(z) \\
&= \phi(0)\area(M) + \sum_{\g \neq 1}\int_{F}\phi(2\cosh d(\g^{-1}z,z)-2)\g_* \tau_{z\mapsto \g^{-1}z} dg_{\HH}(z).
\end{split}
\end{align}

\subsection{Proof of Theorem \ref{selbergTraceFormula}}
We start with the relation $(\ref{pretraceFormula})$ and compute the right-hand side by rearranging the terms in conjugacy classes, following Selberg's original idea.

\begin{lemma}\label{calculPhi0}
For functions $\phi$ and $u$ as in Theorem \ref{selbergTraceFormula} we have:
\[
\phi(0) = \frac{1}{4\pi}\int_{\RR} \xi u(\xi) \coth(\pi \xi) d\xi
\]
\end{lemma}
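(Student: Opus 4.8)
The plan is to invert the chain of transforms $\phi \mapsto a \mapsto v \mapsto u$ explicitly at a single point, exploiting that $\phi(0) = \sqrt{4}\, a(0) = 2a(0)$ and that $a(0)$ can be recovered from $v$ by an Abel-type inversion, while $v$ in turn is recovered from $u$ by Fourier inversion. First I would write down Fourier inversion for $v$ at a general point: $v(t) = \frac{1}{2\pi}\int_{\RR} u(\xi) e^{i\xi t}\,d\xi$. The target constant $\phi(0)$ is, up to the factor $2$, the value $a(0)$, so I need a formula expressing $a(0)$ in terms of $v$. From the definition $v(t) = 4\cosh(t/2)\int_0^\infty a(4\sinh^2(t/2)+y^2)\,dy$, the substitution $s = 4\sinh^2(t/2)$ (so $ds = 4\sinh(t/2)\cosh(t/2)\,dt = 2\sqrt{s}\sqrt{s+4}\cdot\frac{1}{2}\,\dots$, to be done carefully) turns $v$ into an Abel transform of $a$; the standard Abel inversion then recovers $a$, and in particular $a(0)$, as an integral of $v'(t)/\sinh(t/2)$ over $t>0$ (this is exactly the Selberg/Harish-Chandra inversion, but in the "squared Dirac" normalization with the shift by $4$).

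The key steps, in order: (1) Substitute in the definition of $v$ to express it as an Abel transform of $a$ on $(0,\infty)$; (2) apply Abel inversion to get $a(w)$, hence $a(0)$, as an integral involving $v$ or $v'$; (3) substitute the Fourier-inversion formula for $v$ (or $v'$) from step (1) into the expression for $a(0)$, and interchange the order of integration; (4) evaluate the resulting inner $t$-integral, which will be an elementary integral of the form $\int_0^\infty \frac{\sin(\xi t)}{\sinh(t/2)}\,dt$ (or a cousin thereof), whose value is $\pi\tanh(\pi\xi)$ — wait, I should be careful: the standard identity is $\int_0^\infty \frac{\sin(a t)}{\sinh(bt)}\,dt = \frac{\pi}{2b}\tanh\!\left(\frac{\pi a}{2b}\right)$, and with $b = 1/2$ this gives $\pi\tanh(\pi\xi)$; combined with the $\xi$ from integrating $v'$ by parts (which produces a factor $\xi$), and noting $\xi\tanh(\pi\xi)$ versus the claimed $\xi\coth(\pi\xi)$, I will need to track which hyperbolic function appears — the $\coth$ arises precisely because $a$ is tied to $\phi(t)/\sqrt{t+4}$ rather than to a function vanishing at the relevant endpoint, equivalently because the Dirac half-integer weight shifts $\tanh$ to $\coth$; (5) collect constants and read off $\phi(0) = \frac{1}{4\pi}\int_\RR \xi u(\xi)\coth(\pi\xi)\,d\xi$.

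The main obstacle I expect is step (4) together with the bookkeeping of normalization constants: getting the Abel inversion formula with the correct multiplicative constant in the $4\sinh^2(t/2)+y^2$ normalization, and then correctly identifying the elementary integral so that $\coth$ — not $\tanh$ — emerges, including justifying the interchange of integrals (which is fine since $u$ is Schwartz and $v$ decays, but the integrand $\xi u(\xi)\coth(\pi\xi)$ needs the pole of $\coth$ at $0$ to be harmless, using that $\int \xi u(\xi)\,d\xi$ converges and $\xi\coth(\pi\xi)$ is bounded near $0$). A secondary subtlety is that the naive Abel inversion produces a boundary term at $t=0$; one checks it vanishes because $v(0)$ is finite and the weight $1/\sinh(t/2)$ is integrated against $v'$, so the term $[\,v(t)\cdot(\text{something})\,]_0^\infty$ drops out. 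Once these are pinned down the computation is routine, matching the classical Selberg inversion lemma adapted to the operator $\D^-\D^+$.
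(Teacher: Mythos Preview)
Your plan is essentially the paper's own approach --- Abel-invert to express $a(0)$ via $v$, then Fourier-invert $v$ and evaluate an elementary $t$-integral --- but you have not correctly located where the $\coth$ comes from, and without this your step~(4) does not close.

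The point is that $v$ is \emph{not} a pure Abel transform of $a$: the definition carries an extra factor $4\cosh(t/2)$ in front of the integral. If you set $w(t):=v(t)/(4\cosh(t/2))=\int_0^\infty a(4\sinh^2(t/2)+y^2)\,dy$, then the standard Abel inversion indeed gives $a(0)$ as a constant times $\int_0^\infty w'(t)/\sinh(t/2)\,dt$, exactly as you expect. But now $w'=(v'/(4\cosh(t/2))) - v\sinh(t/2)/(8\cosh^2(t/2))$, so
\[
\frac{w'(t)}{\sinh(t/2)}=\frac{v'(t)}{2\sinh t}-\frac{v(t)}{8\cosh^2(t/2)},
\]
and $a(0)$ splits into \emph{two} integrals: one in $v'(t)/\sinh t$ and one in $v(t)/\cosh^2(t/2)$. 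After inserting Fourier inversion, the first yields (a multiple of) $\int_\RR \xi u(\xi)\tanh(\pi\xi/2)\,d\xi$ via your identity with $b=1$ (not $b=1/2$), and the second yields (a multiple of) $\int_\RR \xi u(\xi)/\sinh(\pi\xi)\,d\xi$ via a residue computation. The algebraic identity $\tanh(\pi\xi/2)+1/\sinh(\pi\xi)=\coth(\pi\xi)$ then produces the claimed $\coth$. So the shift from $\tanh$ to $\coth$ is not a mysterious ``half-integer weight'' phenomenon: it comes concretely from the product rule applied to the $\cosh(t/2)$ prefactor in $v$. Your ``boundary term at $t=0$'' is a red herring; the missing piece is this second integral.
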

\begin{proof}
We start with the function $a$ from Theorem \ref{selbergTraceFormula}. Using polar coordinates we see that:
\[
-\frac{\pi}{4}a(z) = \int_0^{\infty}\int_0^{\infty} a'(z+x^2+y^2) dx dy.
\]
Changing the variable $y=2\sinh\left( \frac{t}{2} \right) $ and evaluating at $0$ we get:
\begin{align*}
a(0) & = -\frac{4}{\pi} \int_0^{\infty}\int_0^{\infty} \frac{\partial}{\partial t} a \left( x^2+ 4\sinh^2 \left(\frac{t}{2} \right)\right) \frac{1}{4\sinh \left( \frac{t}{2} \right)} dxdt \\
&= -\frac{1}{2\pi} \left( \int_0^{\infty} \frac{v'(t)}{\sinh(t)}dt + \int_0^{\infty} \frac{v(t)}{4\cosh ^2\left( \frac{t}{2} \right)} dt \right) .
\end{align*}

Now we proceed to compute each integral separately. Starting with the first one, we apply the Fourier inverse formula for the derivative of the even function $v$ and write $\sinh(t)$ as a power series
\begin{align*}
\int_0^{\infty}\frac{v'(t)-v'(-t)}{2\sinh(t)}dt &= \frac{1}{2\pi} \int_0^{\infty} \int_{\RR} \sum_{m=0}^{\infty} u(\xi)i\xi e^{-t(2m+1)}\left( e^{i\xi t} - e^{-i\xi t} \right)d\xi dt \\
&= \frac{1}{4}\int_{\RR}\xi u(\xi) \sum_{m=0}^{\infty} \frac{i}{\pi} \left( \frac{2}{1+2m -i\xi} - \frac{2}{1+2m + i\xi} \right) d\xi \\
& = -\frac{1}{4}\int_{\RR} \xi u(\xi) \tanh \left(\frac{\pi \xi}{2} \right) d\xi.
\end{align*}
The last equality can be seen by applying the identity:
\begin{align}\label{identitateTanh}
\frac{i}{\pi} \sum_{m=0}^{\infty} \left( \frac{1}{\frac{1}{2}+m -iz} - \frac{1}{\frac{1}{2}+m + iz} \right) = - \tanh (\pi z),
\end{align}
for $\xi = 2z$.  The identity $\eqref{identitateTanh}$ is well-known, one can prove it using the Poisson summation formula for $x\mapsto \frac{1}{\pi}\frac{\xi}{(x+1/2)^2 + \xi^2}$, followed by the residue theorem (see e.g. \cite{MarklofSelbergTraceFormulaIntroduction}). For the second integral, we use again the fact that $v$ is even and apply the Fourier inversion formula. Next, we compute the integral in the variable $t$ with the residue theorem
\begin{align*}
\int_{\RR} \frac{v(t)}{8\cosh^2\left( \frac{t}{2} \right)} dt &=\frac{1}{16\pi} \int_{\RR}\int_{\RR} \frac{u(\xi)e^{i\xi t}}{\cosh^2\left(\frac{t}{2}\right)}dt d\xi = \frac{1}{4\pi}\int_{\RR}\frac{u(\xi)\xi}{\sinh(\pi \xi)}. \qedhere
\end{align*}
\end{proof}

We shall now continue with the studying of the second term in the right-hand side of $(\ref{pretraceFormula})$. Two classical lemmas will play an important part.
\begin{lemma}\label{ciclicInfinit}
Let $\g\in \G$ be a non-trivial element (either hyperbolic or parabolic). Then the centralizer subgroup $C(\g)$ is infinite cyclic.
\end{lemma}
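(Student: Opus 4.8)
The plan is to prove that the centralizer $C(\g)$ of a non-trivial element $\g\in\G$ (hyperbolic or parabolic) is infinite cyclic by a standard fixed-point argument combined with the structure of one-parameter subgroups of $\psl$. First I would recall that a non-trivial element of $\psl$ is hyperbolic, parabolic, or elliptic, and that a discrete torsion-free group (as arises from a smooth hyperbolic surface) contains no elliptic elements; in the general finite-area case, elliptic elements have finite order and are excluded by hypothesis here since we only consider $\g$ hyperbolic or parabolic. The key observation is that $C(\g)$ consists of isometries fixing the same fixed-point set on $\partial\HH$ as $\g$: a hyperbolic $\g$ fixes exactly two points, and anything commuting with it must permute, hence fix, those two points; a parabolic $\g$ fixes exactly one point, and anything commuting with it must fix that point. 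So $C(\g)$ is contained in the stabilizer $S$ in $\psl$ of that fixed-point set, which is (after conjugation) either the diagonal subgroup $\{z\mapsto e^t z\}$ (for the hyperbolic case) or the translation subgroup $\{z\mapsto z+b\}$ (for the parabolic case) — in both cases a subgroup isomorphic to $(\RR,+)$.

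The main steps, in order: (1) conjugate so that $\g$ is in normal form — $\g(z)=e^{l}z$ for some $l>0$ if hyperbolic, $\g(z)=z+1$ if parabolic; (2) verify that $C(\g)$ inside $\psl$ equals the full one-parameter subgroup $S\cong\RR$ containing $\g$ (for the hyperbolic case one checks that a matrix commuting with $\mathrm{diag}(e^{l/2},e^{-l/2})$, with $l\neq 0$, must itself be diagonal; for the parabolic case, a matrix commuting with $\left[\begin{smallmatrix}1&1\\0&1\end{smallmatrix}\right]$ must be upper-triangular unipotent up to sign); (3) intersect with $\G$: since $\G$ is discrete, $C_\G(\g)=\G\cap S$ is a discrete subgroup of $S\cong\RR$, hence either trivial or infinite cyclic; (4) since $\g\in C_\G(\g)$ is non-trivial, the group is non-trivial, so it is infinite cyclic.

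I would phrase step (3) carefully: a discrete nontrivial subgroup of $(\RR,+)$ is of the form $\alpha\ZZ$ for some $\alpha>0$ — this is the classical fact that closed nontrivial proper subgroups of $\RR$ are lattices — and under the isomorphism $S\cong\RR$ this exhibits $C_\G(\g)$ as infinite cyclic. One should note why $\G\cap S$ is discrete: it is the intersection of the discrete group $\G$ with a subgroup, hence discrete in $\psl$, and $S$ carries the subspace topology from $\psl$, so $\G\cap S$ is discrete in $S$. I expect the only mild subtlety — not really an obstacle — to be bookkeeping the passage between $\slinear_2(\RR)$ and $\psl=\slinear_2(\RR)/\{\pm1\}$ when computing centralizers of explicit matrices, making sure the $\pm$ ambiguity does not enlarge the centralizer beyond the one-parameter subgroup; this is handled by observing that $-I$ is central and so contributes nothing new in $\psl$. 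No heavy machinery is needed; the entire argument rests on the classification of isometries by fixed points, the explicit form of centralizers of diagonal and unipotent matrices, and discreteness of $\G$.
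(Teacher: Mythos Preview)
Your argument is correct and is the standard proof of this classical fact. The paper itself does not supply a proof: it simply labels Lemma~\ref{ciclicInfinit} (together with the lemma immediately following it) as ``classical'' and moves on. So there is nothing to compare against; you have filled in exactly the elementary fixed-point/one-parameter-subgroup argument that any reader would supply.

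One small expository point: in step~(2) for the hyperbolic case you write that anything commuting with $\g$ must ``permute, hence fix'' the two boundary fixed points. The ``hence'' deserves a word, since a priori an element of $\psl$ could swap the two fixed points. The quickest justification is the matrix computation you already indicate (commuting with $\mathrm{diag}(e^{l/2},e^{-l/2})$ forces diagonal form), or dynamically: a swap would exchange the attracting and repelling fixed points of $\g$, contradicting $\beta\g\beta^{-1}=\g$. Either way the gap is trivial to close and does not affect the correctness of your proof.
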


\begin{lemma}
Let $\g \in \G$ and $\mu$ be a primitive (in the sense of \ref{definitiePrimitiv}) element in $C(\g)$(it exists from the previous lemma). Define $A_{\mu}$ to be a system of representatives for $\G / \langle\mu \rangle$ (not necessarily a subgroup). Then the conjugacy class $[\g]$ can be described as $\{ \alpha \g \alpha^{-1}: \alpha \in A_{\mu} \}$. 
\end{lemma}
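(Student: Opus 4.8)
The plan is to prove the two inclusions between $[\g]$ and $\{\alpha\g\alpha^{-1}:\alpha\in A_{\mu}\}$; the inclusion $\supseteq$ is immediate, since each $\alpha\g\alpha^{-1}$ is by definition a conjugate of $\g$. The work is in the inclusion $\subseteq$, and the key preliminary step is to identify the centralizer precisely. By Lemma \ref{ciclicInfinit}, $C(\g)$ is infinite cyclic, say $C(\g)=\langle c\rangle$ with $c\in\G$. Since $\mu\in C(\g)$, we may write $\mu=c^{k}$ for some $k\in\ZZ\setminus\{0\}$; replacing $c$ by $c^{-1}$ if necessary we take $k\ge 1$. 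If $k\ge 2$, then $\mu=c^{k}$ exhibits $\mu$ as a proper power of an element of $\G$, contradicting its primitivity (Definition \ref{definitiePrimitiv}). Hence $k=1$ and $C(\g)=\langle\mu\rangle$.

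Granting this, I would argue as follows. Let $\beta\g\beta^{-1}$ be an arbitrary element of $[\g]$, with $\beta\in\G$. Because $A_{\mu}$ is a complete set of representatives for the left cosets $\G/\langle\mu\rangle$, there exist $\alpha\in A_{\mu}$ and $n\in\ZZ$ with $\beta=\alpha\mu^{n}$. Since $\mu^{n}\in C(\g)$ commutes with $\g$, we obtain
\[
\beta\g\beta^{-1}=\alpha\mu^{n}\g\mu^{-n}\alpha^{-1}=\alpha\g\alpha^{-1},
\]
which lies in the right-hand set. This proves $\subseteq$ and hence the claimed equality. Note that only the coset-representative property of $A_{\mu}$ is used; it is not needed, nor assumed, that $A_{\mu}$ be a subgroup.

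Finally, although the statement is only a set equality, for the application to the trace formula it is worth recording that the parametrisation is one-to-one: if $\alpha\g\alpha^{-1}=\beta\g\beta^{-1}$ with $\alpha,\beta\in A_{\mu}$, then $\beta^{-1}\alpha\in C(\g)=\langle\mu\rangle$, so $\alpha$ and $\beta$ lie in the same left coset of $\langle\mu\rangle$ and therefore $\alpha=\beta$; thus $\alpha\mapsto\alpha\g\alpha^{-1}$ is a bijection from $A_{\mu}$ onto $[\g]$. There is no genuine obstacle in this lemma; the only point demanding a little care is the passage from $\langle\mu\rangle\subseteq C(\g)$ to equality, which is precisely where the primitivity of $\mu$ (together with Lemma \ref{ciclicInfinit}) is used.
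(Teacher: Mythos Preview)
Your proof is correct and complete. The paper states this lemma as a ``classical'' result without proof, so there is no argument to compare against; your approach---first deducing $C(\g)=\langle\mu\rangle$ from primitivity together with Lemma~\ref{ciclicInfinit}, then reducing an arbitrary conjugator $\beta$ to its coset representative---is the standard one, and the added observation that the parametrisation $\alpha\mapsto\alpha\g\alpha^{-1}$ is a bijection is exactly what is needed (and tacitly used) in the subsequent computation of Lemma~\ref{calculSumaAlpha}.
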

We can rewrite the second term in the right-hand side of $(\ref{pretraceFormula})$ as:
\begin{align*}
& \sum_{\g \neq 1}\int_{F}  \phi(2\cosh d(\g^{-1} z,z)-2)\g_* \tau_{z\mapsto \g^{-1}z} dg_{\HH}(z) \\
&= \sum_{[\mu]} \sum_{n=1}^{\infty}\sum_{\alpha \in A_{\mu}} \int_{F}  \phi(2\cosh d(\alpha \mu^{-n} \alpha^{-1}z,z)-2)(\alpha \mu^n \alpha^{-1})_* \tau_{z\mapsto (\alpha \mu^{-n} \alpha^{-1})z} dg_{\HH}(z).
\end{align*}

We only need one last result to complete the proof. 
\begin{lemma}\label{calculSumaAlpha}
The sum over $\alpha$ can be computed in the following way:
\begin{itemize}
\item If $\mu$ is hyperbolic, then:
\begin{align*}
\sum_{\alpha \in A_{\mu}} \int_{F}  \phi(2\cosh d(\alpha \mu^{-n}& \alpha^{-1}z,z)-2) \\
& \cdot(\alpha \mu^n \alpha^{-1})_* \tau_{z\mapsto (\alpha \mu^{-n} \alpha^{-1})z} dg_{\HH} = \frac{l(\mu) \varepsilon^n(\mu) v(nl(\mu)) }{2\sinh \left( \frac{nl(\mu)}{2}\right) };
\end{align*}
\item If $\mu$ is parabolic, then:
\begin{align*}
&\sum_{\alpha \in A_{\mu}} \int_{F}  \phi(2\cosh d(\alpha \mu^{-n} \alpha^{-1}z,z)-2)(\alpha \mu^n \alpha^{-1})_* \tau_{z\mapsto (\alpha \mu^{-n} \alpha^{-1})z} dg_{\HH} \\
&+\sum_{\alpha \in A_{\mu^{-1}}} \int_{F}  \phi(2\cosh d(\alpha \mu^{n} \alpha^{-1}z,z)-2)(\alpha \mu^{-n} \alpha^{-1})_* \tau_{z\mapsto (\alpha \mu^{n} \alpha^{-1})z} dg_{\HH} \\
&= \frac{\varepsilon^n(\mu)v(0)}{n}.
\end{align*}
\end{itemize}
\end{lemma}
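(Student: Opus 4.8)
The plan is to unfold each $\alpha$-sum over the coset space $\Gamma/\langle\mu\rangle$ into a single integral over a fundamental domain for the cyclic group $\langle\mu\rangle$, to compute the resulting fibrewise trace on the positive spinor bundle $S^+$ explicitly, and then to reduce the remaining scalar integral to the defining formula for $v$. For the unfolding step I would fix a primitive $\mu$ and $n\geq 1$, substitute $z=\alpha w$ in the $\alpha$-th integral, and use that $\tau$ is equivariant under the isometry $\alpha$, that $\rho$ is a homomorphism so $(\alpha\mu^n\alpha^{-1})_*=\alpha_*\mu^n_*\alpha_*^{-1}$, and that the fibre trace on $S^+$ is cyclic; then the conjugation by $\alpha$ drops out. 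Since $\{\alpha^{-1}F:\alpha\in A_\mu\}$ tiles a fundamental domain $D_\mu$ for $\langle\mu\rangle$ and the integrand $\phi(2\cosh d(w,\mu^{-n}w)-2)\operatorname{tr}_{S^+}(\mu^n_*\tau_{w\mapsto\mu^{-n}w})$ is $\langle\mu\rangle$-invariant, the $\alpha$-sum equals $\int_{D_\mu}\phi(2\cosh d(w,\mu^{-n}w)-2)\operatorname{tr}_{S^+}(\mu^n_*\tau_{w\mapsto\mu^{-n}w})\,dg_\HH(w)$, with the trace taken only over $S^+$ because $\mathcal G$ (equivalently $\F$) acts on positive spinors.

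Next I would put $\mu$ in normal form. Conjugating by an isometry $a\in\psl$ and lifting $a$ to the spin bundle (possible since $\HH$ is simply connected), I may assume $\mu$ is the dilation $z\mapsto e^{l}z$ (hyperbolic, $l=l(\mu)$) or $z\mapsto z+1$ (parabolic); the content of the proposition identifying $\varepsilon$ with $\nu$ is exactly that the lift of $\mu^n$ acts on the adapted frame with the constant sign $\varepsilon^n(\mu)$, and a connectedness argument shows this sign is independent of the base point. Hence in the trivialisation $(\ref{sectiuneGlobalaH2})$ the map $\mu^n_*$ acts on $S^+$ as $\varepsilon^n(\mu)\cdot\mathrm{id}$, while $\tau_{w\mapsto\mu^{-n}w}$ acts on $S^+$ by multiplication by the unit complex number in $(\ref{calculTransportParalel})$; since $S^+$ is a complex line, $\operatorname{tr}_{S^+}(\mu^n_*\tau_{w\mapsto\mu^{-n}w})=\varepsilon^n(\mu)\,\tau_{w\mapsto\mu^{-n}w}$. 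This is the step where one must be careful that the trace is over $S^+$ only: this is what produces $\Re\tau$ rather than $2\Re\tau$, i.e. the factor $\tfrac12$ in the final formula.

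It then remains to evaluate scalar integrals. In the hyperbolic case, writing $w=\rho e^{i\varphi}$ one has $\cosh d(w,\mu^{-n}w)=1+\frac{2\sinh^2(nl/2)}{\sin^2\varphi}$ and $\tau_{w\mapsto\mu^{-n}w}=\frac{(1+e^{-nl})y-i(1-e^{-nl})x}{\sqrt{(1-e^{-nl})^2x^2+(1+e^{-nl})^2y^2}}$; the imaginary part is odd in $x$ and vanishes over the $x$-symmetric fundamental domain $\{1\leq\rho<e^{l}\}$, the $\rho$-integral gives $l(\mu)=\int_1^{e^l}\frac{d\rho}{\rho}$, and the substitution $Y=2\sinh(nl/2)\cot\varphi$ turns the $\varphi$-integral into $\frac{1}{2\sinh(nl/2)}\int_{\RR}\frac{2\cosh(nl/2)\,\phi(4\sinh^2(nl/2)+Y^2)}{\sqrt{4\cosh^2(nl/2)+Y^2}}\,dY$, which equals $v(nl(\mu))/(2\sinh(nl(\mu)/2))$ by the definition of $v$ and $4\sinh^2+4=4\cosh^2$. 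In the parabolic case $D_\mu=\{0\leq x<1,\ y>0\}$, the $x$-integral is trivial, and $2\cosh d(w,\mu^{\mp n}w)-2=n^2/y^2$; the term from $\mu^n$ contributes $\varepsilon^n(\mu)\int_0^\infty\phi(n^2/y^2)\frac{2y-in}{\sqrt{n^2+4y^2}}\frac{dy}{y^2}$ and the one from $\mu^{-n}$ the conjugate integrand, so their sum is $\varepsilon^n(\mu)\int_0^\infty\phi(n^2/y^2)\frac{4}{y\sqrt{n^2+4y^2}}\,dy$; the substitution $u=n/y$ reduces this to $\frac{4\varepsilon^n(\mu)}{n}\int_0^\infty\frac{\phi(u^2)}{\sqrt{u^2+4}}\,du=\frac{\varepsilon^n(\mu)}{n}v(0)$.

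The main obstacle I anticipate is the fibre-trace computation: correctly bookkeeping that the trace is taken on $S^+$ alone, that the sign $\varepsilon^n(\mu)$ issuing from the action of $\mu^n_*$ on the adapted frame is exactly the class function $\varepsilon$ and is constant in $w$, and the precise normalisation in $(\ref{calculTransportParalel})$. A secondary subtlety, peculiar to the parabolic case, is that the fundamental domain $\{0\leq x<1\}$ is not symmetric in $x$, so neither of the two $\alpha$-sums is real on its own; only their combination — which is precisely the joint contribution of the conjugacy classes $[\mu^n]$ and $[\mu^{-n}]$ — yields the clean value $\varepsilon^n(\mu)v(0)/n$, after which summing $\sum_{n\geq1}(-1)^n/n=-\log 2$ over the $k$ cusps produces the $-k\log(2)v(0)$ term in Theorem \ref{selbergTraceFormula}.
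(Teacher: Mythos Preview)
Your proposal is correct and follows essentially the same route as the paper: unfold the coset sum into an integral over a fundamental domain for $\langle\mu\rangle$, identify the action of $\mu^n_*$ on $S^+$ with the sign $\varepsilon^n(\mu)$ via the global frame $\sigma^+$, insert the explicit parallel-transport formula $(\ref{calculTransportParalel})$, and reduce to the integral defining $v$. The only visible difference is your choice of fundamental domain and coordinates in the hyperbolic case: you work on the annular sector $\{1\le\rho<e^{l}\}$ in polar coordinates and invoke the $x$-oddness of $\Im\tau$ directly, whereas the paper takes the horizontal band $\{1\le\Im w<e^{l}\}$ and changes variables $(x,y)\mapsto (X,t)=(x/y,\log y)$ followed by $z=2X\sinh(nl/2)$; both lead to the same integral $\int_\RR a\bigl(z^2+4\sinh^2(nl/2)\bigr)\,dz\cdot 2\cosh(nl/2)$. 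Your parabolic computation is line-for-line the paper's. One small remark: your aside that the restriction to $S^+$ is ``what produces $\Re\tau$ rather than $2\Re\tau$'' is misleading---since $S^+$ is a complex line the fibre trace is simply the scalar $\varepsilon^n(\mu)\,\tau$ itself, and the imaginary part drops out by the $x$-symmetry of the integral, not by any halving from the trace.
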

\begin{proof}
We start with the first part. By changing variables in each term of the sum $w = \alpha^{-1}z$ we obtain an integral on $\cup_{\alpha \in A_{\mu}} \alpha^{-1}F$. This set is in fact a fundamental domain for the quotient $\langle\mu \rangle \backslash \HH$. Since the integrand descends on this cylinder, we can compute it on a more convenient fundamental domain, namely the horizontal band $C:=\{w\in \HH : 1 \leq \Im w <e^l  \}$, where $l$ is the length of the unique geodesic associated to $\mu$ (i.e. $l=2\cosh^{-1}|\tr(\mu)/2|$). We will also use the definition of the class function $\varepsilon$ defining the spin structure $(\ref{caracter})$, and the formula for the parallel transport $(\ref{calculTransportParalel})$. Thus, our initial sum becomes:
\begin{align*}
& \sum_{\alpha \in A_{\mu}} \int_{F}  \phi(2\cosh d(\alpha \mu^{-n} \alpha^{-1}z,z)-2)(\alpha \mu^n \alpha^{-1})_* \tau_{z\mapsto (\alpha \mu^{-n} \alpha^{-1})z} dg_{\HH}(z) \\
&= \int_{\langle\mu \rangle \backslash \HH} \phi(2\cosh d( \mu^{-n} w,w)-2)( \mu^n)_* \tau_{w \mapsto  \mu^{-n} w} dg_{\langle\mu \rangle \backslash\HH}(w) \\
&= \int_{C} \phi(2\cosh d( w,e^{nl} w)-2) \varepsilon^n(\mu) (-i) \frac{w-\overline{w}e^{nl}}{|\overline{w}-we^{nl}|} dg_{\HH}(w) \\
&= \int_1^{e^l}\int_{\RR} \phi\left( \frac{(x^2+y^2)(1-e^{nl})^2}{y^2e^{nl}}\right) \varepsilon^n(\mu)(-i) \frac{x+iy -e^{nl}(x-iy)}{|(x+iy)e^{nl} - (x-iy)|} \frac{dxdy}{y^2}.
\end{align*}
Next, we change variables two more times, from $\{ x, y \} $ to $\{ X:=\frac{x}{e^t}, t: = \log y \}$ and then to $\{ z:=2X\sinh(\frac{nl}{2}) \}$:
\begin{align*}
&=\int_{\RR}\int_0^l \phi \left( 4(X^2+1) \sinh^2 \left( \frac{nl}{2}\right) \right)\varepsilon^n(\mu) (-i)\frac{X(1-e^{nl})+i(1+e^{nl})}{\sqrt{X^2(e^{nl}-1)^2 + (e^{nl}+1)^2}}dXdt \\
&= \frac{l\varepsilon^n(\mu)}{2\sinh\left( \frac{nl}{2} \right)} \int_{\RR} a\left( z^2+4\sinh^2 \left( \frac{nl}{2} \right) \right) dz \cdot 2\cosh \left( \frac{nl}{2} \right) = \frac{l \varepsilon^n(\mu) v(nl)}{2\sinh \left( \frac{nl}{2}\right) }.
\end{align*}

For parabolic elements, we proceed similarly. Since $\langle\mu \rangle \setminus \HH = \langle\mu^{-1} \rangle \setminus \HH $, we can suppose that $A_{\mu} = A_{\mu^{-1}}$. After changing the variables (the same changes for both $\mu$ and $\mu^{-1}$), we can compute the integrals on the cylinder $C':=\{ w\in \HH : 0\leq \Re w <1 \}$, which is a fundamental domain of the translation by $1$ (or its inverse). Moreover, notice that integrand does not depend on the real part of $w$. Thus, the right-hand side becomes:
\begin{align*}
&\int_{C'} \left( \phi(2\cosh d(w-n,w)-2)\varepsilon^n(\mu)(-i)
\frac{w-\overline{w}+n}{|w-\overline{w}+n|} \right. \\
& \left. +\phi(2\cosh d(w+n,w)-2)\varepsilon^n(\mu^{-1})(-i)
\frac{w-\overline{w}-n}{|w-\overline{w}-n|}\right) dg_{\HH} \\
&=\int_0^{\infty} \phi \left( \frac{n^2}{y^2} \right) \varepsilon^n(\mu)(-i) \left( \frac{2iy+n}{|2iy+n|} + \frac{2iy-n}{|2iy-n|} \right)\frac{dy}{y^2} \\
&=\varepsilon^n(\mu)4 \int_0^{\infty} a \left( \frac{n^2}{y^2} \right) \frac{dy}{y^2} = \frac{\varepsilon^n(\mu)v(0)}{n}. \qedhere
\end{align*}
\end{proof}

At this point, the proof of Theorem \ref{selbergTraceFormula} is immediate. We start from the equality $(\ref{pretraceFormula})$, and then apply both Lemma \ref{calculPhi0} and Lemma \ref{calculSumaAlpha}.

\subsection{The trace formula for a larger class of functions}
We can prove the trace formula for more general functions $v$ by approximations with compactly supported functions.

\begin{definition}\label{definitieAdmissible}
We say that $v$ is an \emph{admissible} function if:
\begin{itemize}
\item it has exponential decay: $|v(x)|\leq ce^{-|x|(1/2+\epsilon)}$, for some $c>0$ and some $\epsilon > 0$;
\item There exists $\epsilon'>0$ such that $\xi^{2+\epsilon'}u(\xi) \in L^1(\RR)$.
\end{itemize}
\end{definition} 

\begin{remark*}
The second condition is equivalent with the fact that $u$ belongs to the Sobolev space $W^{2+\epsilon', 1}(\RR)$.
\end{remark*}

\begin{theorem} \label{compactCaseLargerClassOfFunctions}
Theorem \ref{selbergTraceFormula} holds true for admissible functions $v$.
\end{theorem}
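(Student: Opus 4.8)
The plan is to extend Theorem \ref{selbergTraceFormula} from $\phi \in C^\infty_c(\RR)$ (equivalently $v$ of compact support) to arbitrary admissible $v$ by a density argument, controlling every term in the trace formula uniformly. First I would fix an admissible $v$ and choose a sequence $v_n$ of functions coming from $\phi_n \in C^\infty_c(\RR)$ via the transforms in Theorem \ref{selbergTraceFormula}, such that $v_n \to v$ and $u_n \to u$ in the relevant norms: concretely, pick $\phi_n$ so that $v_n = \chi_n v$ for a smooth cutoff $\chi_n$ equal to $1$ on $[-n,n]$, or more carefully mollify so that $v_n$ stays an honest transform of a compactly supported $\phi_n$; then $|v_n| \le |v| \le c e^{-|x|(1/2+\epsilon)}$ with a bound independent of $n$, and $\xi^{2+\epsilon'} u_n \to \xi^{2+\epsilon'} u$ in $L^1$ (using that multiplication by $\chi_n$ is uniformly bounded on $W^{2+\epsilon',1}$, or the remark identifying the second condition with the Sobolev statement).

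Next I would check convergence of each of the three terms in the trace formula written for $v_n$. The identity term $\frac{\area(M)}{4\pi}\int_\RR \xi u_n(\xi)\coth(\pi\xi)\,d\xi$ converges to the same integral with $u$, because $\xi\coth(\pi\xi) = |\xi| + O(\xi e^{-\pi|\xi|})$ so the integrand is dominated by a constant times $|\xi|^{-1-\epsilon'}\cdot |\xi^{2+\epsilon'}u(\xi)| \in L^1$ after splitting near $0$ and near $\infty$; dominated convergence applies since $u_n \to u$ pointwise with the uniform $L^1$ bound on $\xi^{2+\epsilon'}u_n$. The cusp term $-k\log(2)v_n(0) \to -k\log(2)v(0)$ is trivial once $v_n(0)\to v(0)$, which follows from $u_n \to u$ in $L^1$. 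The hyperbolic sum $\sum_{[\mu]}\sum_{n\ge1} \frac{l(\mu)\varepsilon^n(\mu)v_n(nl(\mu))}{2\sinh(nl(\mu)/2)}$ is where the exponential-decay hypothesis on $v$ enters: using $|v_n(x)| \le c e^{-|x|(1/2+\epsilon)}$ uniformly and $\sinh(nl(\mu)/2) \ge \frac12 e^{nl(\mu)/2 - \log 2}$, the general term is bounded by $C\, l(\mu)\, e^{-nl(\mu)\epsilon}$; combined with the geodesic counting bound $L(r) < C e^r$ from Lemma \ref{marginireGeodeziceNonCompact} (Weyl-type counting of primitive lengths, giving the $j$-th length $\asymp \log j$), this double series converges absolutely and uniformly in $n$, so dominated convergence over the index set $\{[\mu]\}\times \NN$ lets us pass to the limit termwise, getting the same sum with $v$.

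Finally I would assemble: each side of the trace formula for $v_n$ converges, the left side $\sum_j u_n(\xi_j) \to \sum_j u(\xi_j)$ by dominated convergence using $|u_n(\xi_j)| \le C(1+\xi_j^2)^{-1-\epsilon'/2}$ (from the $W^{2+\epsilon',1}$ bound, via $|u_n(\xi)| \lesssim (1+|\xi|)^{-2-\epsilon'}\|\xi^{2+\epsilon'}u_n\|_{L^1}$ up to lower-order terms — more precisely interpolating so the tail is summable) together with $\xi_j \sim j^{1/2}$ from the Weyl law of \cite{Moroianu}, and therefore the identity persists for $v$. The main obstacle I expect is the bookkeeping in the approximation step: ensuring that the chosen $\phi_n$ genuinely reproduce, via the (non-trivial, non-injective-looking) chain $a \mapsto v \mapsto u = \hat v$, functions $v_n$ that simultaneously (i) have compact support, (ii) are dominated by the fixed admissible envelope uniformly in $n$, and (iii) have $u_n$ converging in $W^{2+\epsilon',1}$ — one cannot just truncate $\phi$ and hope $v$ truncates nicely, so the cleanest route is probably to truncate and mollify at the level of $v$ itself and then verify that $v_n$ still arises from some compactly supported $\phi_n$ by inverting the relation between $a$ and $v$, or alternatively to rerun the proof of Theorem \ref{selbergTraceFormula} directly with $v$ admissible, checking that Proposition \ref{marginireKernelG} and Proposition \ref{traceClass} only used the decay and Sobolev bounds rather than compact support. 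I would present the density argument as the primary line and remark that the direct re-derivation is an equivalent alternative.
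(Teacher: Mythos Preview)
Your proposal is correct and follows essentially the same approximation-by-compactly-supported-functions strategy as the paper, with dominated convergence on each side controlled by the exponential decay of $v$ together with the geodesic counting Lemma~\ref{marginireGeodeziceNonCompact} on the geometric side and by the decay of $u$ together with the Weyl law $\xi_j\sim j^{1/2}$ on the spectral side. If anything you are more careful than the paper: the published proof is quite terse (it does not separately discuss the $\coth$ integral or the cusp term, and it silently assumes the approximants $v_m$ arise from compactly supported $\phi_m$), whereas you treat each term explicitly and flag the inversion $v\mapsto\phi$ as the one point needing attention.
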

\begin{proof}
Take $(v_m)_{m\in \NN}$ an increasing sequence of compactly supported functions converging pointwise to $v$. It follows that $\hat{v}_m$ also converges pointwise to $\hat{v} = u$. From the trace formula $\ref{selbergTraceFormula}$, we get a sequence of equalities which we would like to pass to the limit. To do so, we will show that each side is bounded. Let us start with the left-hand side. The derivatives of $v$ are integrable, hence $u\in \mathcal{O}(|\xi|^{-2-\epsilon'})$. If $M$ is compact, we can use Weyl's law to deduce that that $\xi_j \sim j^{1/2}$, for large $j$. Otherwise, since the spin structures along the cusp is not trivial, we can use the generalisation of Weyl's law  (see \cite[Theorem 3]{Moroianu}) to deduce the same approximation. In conclusion
\[
\sum_{j=0}^{\infty}u(\xi_j) \sim \sum_{j=1}^{\infty}j^{-1-\epsilon'/2} < \infty.
\]

To bound the right-hand side we first rewrite the two sums as a sum after all  closed geodesics on $M$ (not necessarily primitive). Let $l_k$ denote the length of the $k-$th smallest geodesic, counted with multiplicity. From Lemma \ref{marginireGeodeziceCompact}, we get that $l_k \sim \log k$, for large $k$. Thus, using the exponential decay of v, we get:
\begin{align*}
\sum_{l \in \mathcal{L}_\G} \sum_{n=1}^{\infty} \frac{l|v(nl)|}{2\sinh \left(\frac{nl}{2}\right)} & \leq 
\sum_{k=1}^{\infty} \frac{l_k |v(l_k)|}{2\sinh\left( \frac{l_k}{2} \right)} \sim \sum_{k=1}^{\infty} \log (k) k^{-1-\epsilon} <\infty. \qedhere
\end{align*}
\end{proof}


\section{The non-compact case as a limit of compact cases}

Now we have Theorem \ref{selbergTraceFormula} for both compact and non-compact surfaces of finite area. We use the same notations as before. One may wonder if the formula for the compact surfaces converges to the one for surfaces with cusps when $M$ goes through a pinching process (Definition \ref{pinchingProcess}). In this section we prove that this is indeed so. For simplicity, we present the case when the length of only one geodesic converges to $0$. 

\begin{theorem}\label{limitSelbergFormula}
Let $M$ be a compact hyperbolic surface going through a pinching process along a geodesic $\eta$ (corresponding to the conjugacy class $[\mu']$, for $\mu' \in \G$. If $\varepsilon(\mu')=-1$ (i.e. the spin structure along $\eta$ is non-trivial), then:
\begin{align*}
\sum_{[\mu] } \sum_{n=1}^{\infty} \frac{l_t(\mu)\varepsilon^n(\mu)v(nl_t(\mu))}{2\sinh \left(\frac{nl_t(\mu)}{2}\right)} \rightarrow 
\sum_{[\mu] } \sum_{n=1}^{\infty} \frac{l_0(\mu)\varepsilon^n(\mu)v(nl_0(\mu))}{2\sinh \left(\frac{nl_0(\g)}{2}\right)}-2\log(2)v(0),
\end{align*}
as $t\rightarrow 0$, where $[\mu]$ runs along all conjugacy classes of primitive elements.
\end{theorem}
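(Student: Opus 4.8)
\emph{Proof strategy.} The plan is to isolate the contribution of the pinched geodesic from the rest of the geometric side and to treat the two pieces by different mechanisms. Write $l:=l_t(\eta)=l_t(\mu')\to 0$. Since the geodesic $\eta$ carries two orientations, it contributes \emph{two} primitive hyperbolic conjugacy classes to the sum on the left, namely $[\mu']$ and $[(\mu')^{-1}]$; both have length $l$, and because $\varepsilon$ is $\{\pm1\}$-valued, $\varepsilon((\mu')^{-1})=\varepsilon(\mu')=-1$, so $\varepsilon^n((\mu')^{\pm1})=(-1)^n$. Hence these two classes together contribute $2\,S(l)$ with
\[
S(l):=\sum_{n=1}^{\infty}\frac{l\,(-1)^n\,v(nl)}{2\sinh(nl/2)},
\]
and all remaining primitive (hence hyperbolic) classes form a residual sum $R_t$. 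The statement reduces to the two limits $\lim_{l\to0^+}S(l)=-v(0)\log 2$ and $\lim_{t\to 0}R_t=\sum_{[\mu]}\sum_{n\ge1}\frac{l_0(\mu)\varepsilon^n(\mu)v(nl_0(\mu))}{2\sinh(nl_0(\mu)/2)}$, the sum over primitive hyperbolic classes of the limit surface; adding the two gives the right-hand side, the $-2v(0)\log 2$ being exactly the correction attached to the \emph{two} cusps created at the node (consistent with the $-k\log(2)v(0)$ term in Theorem \ref{selbergTraceFormula} applied to the limit surface).

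For the first limit, dominated convergence is unavailable ($\tfrac{l}{2\sinh(nl/2)}\to\tfrac1n$, and $\sum 1/n$ diverges), so I would first rewrite $S(l)$ cleanly. Put $\rho(x):=\dfrac{x\,v(x)}{2\sinh(x/2)}$, which (from $v$ being admissible, Definition \ref{definitieAdmissible}) is $C^1$ with $\rho'$ bounded and integrable, satisfies $\rho(0^+)=v(0)$ and $\rho(+\infty)=0$. Since $\tfrac{l}{2\sinh(nl/2)}=\tfrac{\rho(nl)}{n\,v(nl)}$ we get $S(l)=\sum_{n\ge1}\tfrac{(-1)^n}{n}\rho(nl)$. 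Summation by parts with $A_n:=\sum_{m=1}^n\tfrac{(-1)^m}{m}$, using $A_n\to-\log 2$ and $|A_n+\log 2|\le\tfrac{1}{n+1}$ (and the vanishing boundary term $A_N\rho(Nl)\to 0$), yields
\[
S(l)=\sum_{n=1}^{\infty}A_n\bigl(\rho(nl)-\rho((n+1)l)\bigr)=-\log 2\cdot\rho(l)+\sum_{n=1}^{\infty}(A_n+\log 2)\bigl(\rho(nl)-\rho((n+1)l)\bigr).
\]
The first term tends to $-v(0)\log 2$. For the second, split at a large fixed $N_0$: the tail is $\le\tfrac{1}{N_0+1}\sum_{n\ge1}|\rho(nl)-\rho((n+1)l)|\le\tfrac{1}{N_0+1}\|\rho'\|_{L^1}$, while the head has $\le N_0$ terms each of size $\le\|\rho'\|_{\infty}\,l\to0$ as $l\to0$. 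Hence $S(l)\to-v(0)\log 2$.

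For the residual sum I would rewrite $R_t$ as a sum over all oriented closed geodesics of $(M,g_t)$ that are not powers of $\eta$. Those with zero geometric intersection number with $\eta$ are topologically the closed geodesics of $M\setminus\eta$; under the pinching process (Definition \ref{pinchingProcess}) their $g_t$-lengths converge to their $g_0$-lengths on the limit surface (continuity of lengths in Fenchel--Nielsen coordinates, cf. Schulze \cite{SchulzeZetaLaplacian}), and the value of $\varepsilon$ on them is unchanged because it is determined by the metric-independent spin structure $(\ref{caracter})$; so each such term converges to its limit-surface counterpart. Geodesics crossing $\eta$ have $g_t$-length $\to\infty$ by the collar lemma, so the exponential decay of $v$ kills their terms. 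To exchange limit and (double) sum I would dominate uniformly in $t$: from $\tfrac{l_t(\mu)}{2\sinh(nl_t(\mu)/2)}\le C_1 e^{-nl_t(\mu)/2}$ and $|v(x)|\le c\,e^{-|x|(1/2+\epsilon)}$, each primitive class $[\mu]\ne(\mu')^{\pm1}$ contributes at most $C_2\,e^{-l_t(\mu)(1+\epsilon)}$, while $l_t(\mu)\ge\delta>0$ uniformly in $t$ for such $\mu$ (only $\eta$ shrinks) and, by Proposition \ref{marginireGeodeziceParametru}, the number of such classes of length $\le r$ is $\le C e^r$ with $C$ independent of $t$; a Stieltjes estimate then bounds $\sum_{[\mu]\ne(\mu')^{\pm1}}e^{-l_t(\mu)(1+\epsilon)}\le\tfrac{(1+\epsilon)C}{\epsilon}$ uniformly. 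Dominated convergence for series then gives $R_t\to$ the hyperbolic geometric side of the limit surface, and combining with $2S(l)\to-2v(0)\log 2$ finishes the proof.

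The main obstacle is the limit $S(l)\to-v(0)\log 2$: it is not a dominated-convergence statement, and the correct value — which encodes the alternating harmonic series $\sum(-1)^n/n=-\log 2$ forced by $\varepsilon(\mu')=-1$, hence the extra $\log 2$ per nascent cusp — must be extracted through the summation-by-parts/concentration argument above. A secondary, more routine, difficulty is the uniform-in-$t$ domination of the residual sum, which rests on the uniform geodesic count of Proposition \ref{marginireGeodeziceParametru} and on identifying the surviving conjugacy classes with those of the limit surface.
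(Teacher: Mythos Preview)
Your proposal is correct and follows essentially the same strategy as the paper: split off the two conjugacy classes of the pinched geodesic, show their contribution tends to $-2\log(2)v(0)$ via the alternating sign $\varepsilon(\mu')=-1$, and control the residual sum uniformly in $t$ by the geodesic-counting bound of Proposition \ref{marginireGeodeziceParametru} together with the exponential decay of $v$. The only noteworthy variation is in the alternating-series limit: you use Abel summation with the partial sums $A_n=\sum_{m\le n}(-1)^m/m$ and the bound $|A_n+\log 2|\le 1/(n+1)$, whereas the paper introduces $f(x)=\tfrac{xv(2x)}{\sinh x}$, pairs consecutive even/odd terms, and invokes the mean value theorem to dominate by $\sum_j 1/\bigl(2j(2j-1)\bigr)$; both devices are standard and require the same mild regularity (boundedness of $f$ and of $xf'$, equivalently your $\rho'$ bounded and integrable).
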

\begin{proof}
There are three types of closed geodesics on $M$: those who do not intersect $\eta$, those who do intersect it transversally, and $\eta$ itself. The length of the geodesics in the first category can vary during the process, but remains bounded and does not vanish. The length of those in the second category increases to $\infty$. The sum after the conjugacy classes can be split as follows:
\[
\sum_{[\mu] } \sum_{n=1}^{\infty} \frac{l_t(\mu)\varepsilon^n(\mu)v(nl_t(\mu))}{2\sinh \left(\frac{nl_t(\mu)}{2}\right)} = 2
\sum_{n=1}^{\infty} \frac{l_t(\mu')\varepsilon^n(\mu')v(nl_t(\eta))}{2\sinh \left(\frac{nl_t(\eta)}{2}\right)}  + 
\sum_{[\mu] \neq [\mu'] } \sum_{n=1}^{\infty} \frac{l_t(\mu)\varepsilon^n(\mu)v(nl_t(\mu))}{2\sinh \left(\frac{nl_t(\mu)}{2}\right)},
\]
We study each term separately. Let us consider the function:
\begin{align*}
f:[0,\infty) \longrightarrow \RR && f(x):=\frac{xv(2x)}{\sinh(x)}.
\end{align*}
In terms of $f$, the first term becomes:
\begin{align*}
2\sum_{n=1}^{\infty} \frac{l_t(\eta) \varepsilon^n(\mu')v(nl_t(\eta))}{2\sinh\left( \frac{nl_t(\eta)}{2} \right)} &= 2\sum_{n=1}^{\infty} \frac{(-1)^n}{n}f\left( \frac{nl_t(\eta)}{2}\right) \\
&= 2\sum_{j=1}^{\infty} \frac{f(jl_t(\eta))-f \left( \frac{2j-1}{2}l_t(\eta) \right) }{2j-1} -\frac{f(jl_t(\eta))}{2j(2j-1)} \\
&=2\sum_{j=1}^{\infty} \frac{jl_t(\eta) f'(c_{j,t})}{2j(2j-1)} -\frac{f(jl_t(\eta))}{2j(2j-1)},
\end{align*}
where $c_{j,t} \in [(2j-1)l_t(\eta)/2, jl_t(\eta)]$ is given by the mean value theorem. Since both $|f(x)|$ and $|xf'(x)|$ are bounded, the above sum is bounded uniformly in $t$, and hence:
\begin{align*}
2\sum_{n=1}^{\infty} \frac{l_t(\eta) \varepsilon^n(\mu')v(nl_t(\eta))}{2\sinh\left( \frac{nl_t(\eta)}{2} \right)} \rightarrow -2\log(2)v(0) && 
\text{as } t\rightarrow 0.
\end{align*}
To tackle the second term, let us denote $l'_{m}(t)$ the length of the $m-$th smallest closed, oriented geodesic on $M$, with respect to the metric $g_t$, excluding $\eta$ and its multiples. Using the estimates from Proposition \ref{marginireGeodeziceParametru}, we get that $l'_m(t) \geq \log(m) - C$, where $C$ is bounded in $t$. Taking the sum after every closed, oriented geodesic (not only primitive ones), we obtain:
\begin{align*}
\sum_{[\mu] \neq [\mu'] } \sum_{n=1}^{\infty} \frac{l_t(\mu)\varepsilon^n(\g)v(nl_t(\mu))}{2\sinh \left(\frac{nl_t(\mu)}{2}\right)} 
& \leq 
\sum_{m=1}^{\infty} \frac{l'_m(t) |v(l'_m(t))|}{2\sinh\left( \frac{l'_m(t)}{2} \right)} \\
&\leq C \sum_{m=1}^{\infty} \log (m) m^{-1-\epsilon}.
\end{align*}

Therefore we can pass to the $t$-limit inside the initial two sums. The geodesic $\eta$ will produce the logarithmic term, the geodesics that intersect $\eta$ will vanish ($v$ has exponential decay), and all other geodesics will eventually stabilise.
\end{proof}

Thus, the right-hand side of the trace formula converges, and so we obtain the convergence of the left-hand side as well, yielding the following:
\begin{theorem}\label{convergentaSpectru}
Let $u:\RR\longrightarrow \RR$ be an admissible function (in the sense of Definition \ref{definitieAdmissible}). Consider $M$ a hyperbolic surface, with a spin structure, which undergoes a pinching process and suppose that the spin structure along the pinched geodesics is not trivial. Then, as $t$ converges to $0$, we have that:
\begin{align*}
\tr\left( u(\D_t^2) \right) \rightarrow \tr\left( u(\D_0^2) \right),
\end{align*}
where $\D_t$ denotes the Dirac operator on $M$ for the metric $g_t$.
\end{theorem}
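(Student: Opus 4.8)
The plan is to read off both sides of the asserted convergence directly from the trace formula. I would apply Theorem \ref{compactCaseLargerClassOfFunctions} to the admissible function $u$ on each compact surface $(M,g_t)$, $t\in(0,1]$, and, separately, on the limit surface $(M,g_0)$. On each of these the left-hand side of the formula is $\sum_j u(\xi_j(t))$ with $\{\xi_j(t)^2\}$ the spectrum of $\D_t^-\D_t^+$, which on a surface equals $\tfrac12\tr u(\D_t^2)$ since $\D^+\D^-$ and $\D^-\D^+$ are isospectral (including the multiplicity of $0$) in dimension two. So everything reduces to showing that the \emph{geometric} side of the trace formula at $g_t$ converges, as $t\to0$, to the geometric side at $g_0$.

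Next I would treat the three pieces of the geometric side separately. The identity term $\tfrac{\area(M)}{4\pi}\int_{\RR}\xi u(\xi)\coth(\pi\xi)\,d\xi$ is independent of $t$ because the area is preserved by a pinching process. The cusp term $-k_t\log(2)v(0)$ is $0$ for every $t>0$ (compact surface, $k_t=0$) but equals $-2\kappa\log(2)v(0)$ at $t=0$, where $2\kappa$ is the number of cusps produced by pinching the $\kappa$ disjoint geodesics. For the sum over hyperbolic conjugacy classes I would use Theorem \ref{limitSelbergFormula}: its proof goes through verbatim for a pinching process along $\kappa$ disjoint simple geodesics, the sum splitting into the $\kappa$ sequences attached to the degenerating classes $[\mu_1],\dots,[\mu_\kappa]$ — each contributing $-2\log(2)v(0)$ in the limit via the mean-value/alternating-series estimate — plus a remainder over the remaining primitive classes, whose lengths stay $\ge\log m-C$ with $C$ uniform in $t$ by Proposition \ref{marginireGeodeziceParametru}, so that the remainder converges by dominated convergence to the corresponding sum on $(M,g_0)$. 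Thus the hyperbolic sum at $g_t$ tends to the hyperbolic sum at $g_0$ minus $2\kappa\log(2)v(0)$. Adding the three pieces, the jump $-2\kappa\log(2)v(0)$ from the degenerating classes is cancelled exactly by the jump $+2\kappa\log(2)v(0)$ of the cusp term as $t$ crosses $0$, so the geometric sides converge, and hence so do the spectral sides: $\tr u(\D_t^2)\to\tr u(\D_0^2)$.

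Two points I would check carefully. First, for $\tr u(\D_0^2)$ to make sense — and for the trace formula to apply at $t=0$ — one needs $\D_0$ to have discrete spectrum; this follows from the hypothesis $\varepsilon(\mu_j)=-1$ once one observes, using that $\varepsilon$ on a closed geodesic is the holonomy of the spin bundle and $\varepsilon$ on the associated parabolic is the limit of holonomies along horocycles escaping into the cusp, that by continuity each of the $2\kappa$ new cusps inherits a non-trivial spin structure in the sense of Definition \ref{DefinitionNonTrivial}, whence discreteness by the lemma following that definition. Second, the same condition $\varepsilon(\mu_j)=-1$ is precisely what makes the $n$-sum over a degenerating class an alternating series and hence produces the $\log 2$; without it that series diverges as $l_t(\eta_j)\to0$ and no finite limit could exist.

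The analytically substantial work has already been done in Theorem \ref{limitSelbergFormula} and Proposition \ref{marginireGeodeziceParametru}; the only genuinely new ingredient is the bookkeeping observation that the discontinuity in the number of cusps at $t=0$ is compensated term-for-term by the $\log 2$'s emerging from the pinched conjugacy classes. I expect this matching — together with the uniform geodesic-counting bound along the degeneration that legitimizes the dominated-convergence step for the non-degenerating classes — to be the only delicate point; everything else is a transcription of the already-established trace formula and its limiting behaviour.
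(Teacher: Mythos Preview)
Your proposal is correct and follows exactly the paper's approach: the paper deduces Theorem \ref{convergentaSpectru} in a single sentence from Theorem \ref{limitSelbergFormula} (``Thus, the right-hand side of the trace formula converges, and so we obtain the convergence of the left-hand side as well''), and you have faithfully unpacked that sentence into the three-term bookkeeping (identity, hyperbolic, cusp) with the $-2\kappa\log(2)v(0)$ matching. Your additional remarks on why the limit spin structure is non-trivial along each new cusp and on the role of $\varepsilon(\mu_j)=-1$ are accurate and make explicit what the paper leaves implicit.
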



\section{Applications}
Four applications can be obtained from Theorems \ref{selbergTraceFormula} and \ref{limitSelbergFormula}. Most of them will make use of the following family of functions:
\begin{align}\label{heatFamilyForSelberg}
v_T:\RR\longrightarrow \RR, && v_T(x):=\frac{e^{-x^2/4T}}{\sqrt{4\pi T}},
\end{align}
where $T>0$. Clearly $v_T$ is admissible. Moreover, its Fourier transform is given by:
\begin{align*}
u_T :\RR \longrightarrow \RR, && u_T(\xi) := e^{-T\xi^2}.
\end{align*}

\subsection{Huber's Theorem for the Dirac operator}
Consider $M$ and $N$ two hyperbolic surfaces of finite area endowed with non-trivial spin structures.
\begin{proof}[Proof of Theorem \ref{isospectralityTheorem}]
The proof follows the lines presented in \cite{Buser}. Suppose they have the same spectrum of the Dirac operator. We want to show that they also have the same area, number of cusps, and length spectrum. We apply Theorem \ref{selbergTraceFormula} for both $M$ and $N$, using the family $(\ref{heatFamilyForSelberg})$ and take the difference:
\begin{align*}
0&=\left( \area(M) - \area(N) \right) \frac{1}{4\pi}\int_{\RR} e^{-T\xi^2} \xi\coth(\pi \xi)d\xi \\
&+ 
\sum_{[\mu]}\sum_{n=1}^{\infty} \frac{l(\mu)\varepsilon_M^n(\mu)e^{-n^2l^2(\mu)/4T}}{2\sinh \left( \frac{nl(\mu)}{2}\right)\sqrt{4\pi T}}
-
\sum_{[\nu]}\sum_{n=1}^{\infty} \frac{l(\nu)\varepsilon_N^n(\nu)e^{-n^2l^2(\nu)/4T}}{2\sinh \left( \frac{nl(\nu)}{2}\right)\sqrt{4\pi T}} \\
&-
\frac{(k_M-k_N)\log(2)}{\sqrt{4\pi T}}.
\end{align*}
First, let us denote:
\begin{align}\label{heatTraceIntegralFunction}
I(T):=\int_{\RR} e^{-T\xi^2} \xi\coth(\pi \xi)d\xi.
\end{align}
Changing the variables to $\xi'=\xi\sqrt{T}$ one can easily see that 
$\lim_{T\rightarrow 0} TI(T)= 1$. Second, the length of primitive elements is bounded from below by the systole (i.e. the shortest geodesic). Moreover, the map 
\begin{align}\label{exponentialDecayGeodesics}
T \mapsto \frac{l(\mu)\varepsilon^n(\mu)e^{-n^2l^2(\mu)/4T}}{2\sinh \left( \frac{nl(\mu)}{2}\right)\sqrt{4\pi T}},
\end{align}
is non-decreasing for $T$ close to $0$, hence, the sums in the right-hand side decrease to $0$ exponentially fast as $T\rightarrow 0$. Therefore, $M$ and $N$ have the same area and the same number of cusps (and hence, the same Betti number $b_1$). Going forward, suppose $[\mu_1]$ and $[\nu_1]$ are the shortest geodesic on $M$ and $N$ respectively. If $l(\mu_1)<l(\nu_1)$, then we chose $v$ a function with compact support which takes value $1$ on $l(\mu_1)$ and vanishes on the length of all other geodesics on either $M$ or $N$. Applying again Theorem \ref{selbergTraceFormula} for this function we get a contradiction. It follows that $l(\mu_1)=l(\nu_1)$. Moreover they must have the same multiplicity and the same sign associated by the two class functions $\varepsilon_M$ and $\varepsilon_N$. We continue the same way to finish the direct implication. For the converse, suppose the two surfaces have the same primitive length spectrum and the class functions coincide on it. We use again the family of functions $(\ref{heatFamilyForSelberg})$, however, this time we will look at the limit $T\rightarrow \infty$. The trace formula yields:
\begin{align*}
0
&=
\sum_{j=0}^{\infty} e^{-T\lambda^M_j} 
-
\sum_{j=0}^{\infty} e^{-T\lambda^N_j}\\
&- 
\frac{\area (M)-\area(N)}{4\pi}\int_{\RR} e^{-T\xi^2} \xi\coth(\pi \xi)d\xi\\
&+ 
\frac{(k_M-k_N)\log(2)}{\sqrt{4\pi T}}.
\end{align*}
When $T\rightarrow \infty$, the power series of the second term is:
\begin{align*}
I(T)=\frac{1}{\sqrt{\pi T}} + \frac{1}{6}\sqrt{ \frac{\pi^3}{T^3}} + \mathcal{O}\left( \frac{1}{\sqrt{T^5}}\right).
\end{align*}
Hence, $0$ has the same multiplicity in the spectra of the two surfaces, and the two surfaces have the same number of cusps and the same area. After removing the null eigenvalues, their respective heat traces are equal. Dividing by the next smallest eigenvalues, we deduce it must appear in both the Dirac spectrum on $M$ and on $N$, with the same multiplicity. This concludes the proof.
\end{proof}
As a conclusion, one can read both the number of cusps and the area (thus, the first Betti number) of a surface in either its Dirac spectrum or in its primitive length spectrum.

\subsection{Heat trace asymptotics}
Consider $M$ a hyperbolic surface with $k$ cusps. Suppose we pick a spin structure which satisfies the hypothesis of Theorem \ref{selbergTraceFormula}.
\begin{proof}[Proof of Theorem \ref{heatTraceAsymptotic}]
For the family of functions $(\ref{heatFamilyForSelberg})$, the trace formula reads:
\begin{align*}
\sum_{j=0}^{\infty} e^{-T\lambda_j} 
&= 
\frac{\area (M)}{4\pi}\int_{\RR} e^{-T\xi^2} \xi\coth(\pi \xi)d\xi + \sum_{[\mu]}\sum_{n=1}^{\infty} \frac{l(\mu)\varepsilon^n(\mu)e^{-n^2l^2(\mu)/4T}}{2\sinh \left( \frac{nl(\mu)}{2}\right)\sqrt{4\pi T}}-\frac{k\log(2)}{\sqrt{4\pi T}},
\end{align*}
where $[\mu]$ runs over all classes of primitive, hyperbolic elements. As before, the map in $(\ref{exponentialDecayGeodesics})$ is non-increasing. Hence, the second term in the right-hand side decreases exponentially fast. For the integral $I(T)$ defined above $(\ref{heatTraceIntegralFunction})$ consider the difference:
\begin{align*}
I(T) - \frac{1}{T}=\int_{\RR} e^{-T\xi^2}\left( \xi\coth(\pi \xi)-|\xi|\right) d\xi.
\end{align*}
Note that the map $\xi \mapsto \xi\coth(\pi \xi)-|\xi|$ vanishes exponentially fast at infinity. Thus:
\begin{align*}
I(T) - \frac{1}{T}\sim \sum_{m=0}^{\infty}a_m T^m; && a_m=\int_{\RR}e^{-T\xi^2 }(-1)^m\xi^{2m}\left( \xi\coth(\pi \xi)-|\xi|\right) d\xi,
\end{align*}
meaning that 
\begin{align*}
\left| I(T)-\frac{1}{T}-\sum_{m=1}^n a_mT^m \right| \in \mathcal{O}(T^{n+1}).
\end{align*}

Therefore, as $T\searrow 0$:
\[
\sum_{j=0}^{\infty} e^{-T\lambda_j} \sim \frac{\area (M)}{4 \pi T} -\frac{k \log(2)}{\sqrt{4\pi T}} + \sum_{m=0}^{\infty}\frac{a_m\area (M) }{4\pi}T^m.
\qedhere
\]
\end{proof}

Note that the only term in the asymptotic that depends on the surface is $\frac{k \log(2)}{\sqrt{4\pi T}}$. The other terms come from the action of $\D$ on the hyperbolic plane and are the same for any surface.

\subsection{Uniform Weyl law}
Suppose that $M$ goes through a pinching process as in Definition \ref{pinchingProcess}, producing at $t=0$ a hyperbolic surface with $2k$ cusps. Pinching along more than one geodesic adds no conceptual difficulty to the proof, thus we will present the case $k=1$. Denote $\eta$ the pinched geodesic and $[\mu'] $ its corresponding conjugacy class.
\begin{proof}[Proof of Theorem \ref{uniformWeylLaw}]
From the hypothesis, we know that $\varepsilon(\mu')=-1$. By $\lambda_j(t)$ we denote the $j$-th largest eigenvalue (counted with multiplicity) of $\D_t^2$, the Dirac operator corresponding to the metric $g_t$. Remember that $\D_t^2$ has two times the spectrum of $\D^-\D^+$. Again, applying the trace formula for the family of functions $(\ref{heatFamilyForSelberg})$, we get:
\begin{align*}
\sum_{j=0}^{\infty} e^{-T\lambda_j(t)} 
&= \frac{\area (M)}{2\pi}\int_{\RR}\xi e^{-T\xi^2} \coth(\pi \xi)d\xi 
+ 4\sum_{n=1}^{\infty}\frac{l(\mu')(-1)^n e^{-n^2l^2(\mu')/4T}}{2\sinh \left( \frac{nl(\mu')}{2}\right)\sqrt{4\pi T}}\\
&+ 
2\sum_{[\mu]\neq [\mu']}\sum_{n=1}^{\infty} \frac{l(\mu)\varepsilon^n(\mu)e^{-n^2l^2(\mu)/4T}}{2\sinh \left( \frac{nl(\mu)}{2}\right)\sqrt{4\pi T}}.
\end{align*}
We have seen in the proof of Theorem \ref{limitSelbergFormula} that, as $t\rightarrow 0:$
\begin{align*}
4\sum_{n=1}^{\infty}\frac{l(\mu')(-1)^n e^{-n^2l^2(\mu')/4T}}{2\sinh \left( \frac{nl(\mu')}{2}\right)\sqrt{4\pi T}} \rightarrow -\frac{2\log(2)}{\sqrt{\pi T}}.
\end{align*}
Combining this convergence with Theorem \ref{heatTraceAsymptotic} we can deduce the following convergence, uniformly in $t$:
\begin{align}\label{heatTraceLimit}
\lim_{T\rightarrow 0} T \sum_{j=0}^{\infty} e^{-T\lambda_j(t)} =\frac{\area (M)}{2\pi },
\end{align}
From here, we finish using  Karamata's theorem. However, for completeness, we will include a sketch of the proof as well. Denote $\alpha:=\frac{\area (M)}{2\pi}$. From $(\ref{heatTraceLimit})$ one can easily see that:
\begin{align*}
\lim_{T\rightarrow 0} T\sum_{j=0}^{\infty}e^{-T\lambda_j(t)} P(e^{-T\lambda_j(t)}) = \alpha\int_0^1 P(s)ds,
\end{align*}
where $P(s):=\sum_{m=0}^r c_m s^m$ is an arbitrary polynomial. Clearly this convergence is uniform in $t$. Since polynomials are dense in continuous functions on $[0,1]$, we obtain:
\begin{align}\label{heatTraceFunctionLimit}
\lim_{T\rightarrow 0} T\sum_{j=0}^{\infty}e^{-T\lambda_j(t)} f(e^{-T\lambda_j(t)}) = \alpha\int_0^1 f(s)ds,
\end{align}
uniformly in $t$, where $f:[0,1]\longrightarrow \RR$ is the continuous function defined by:
\begin{align*}
f(s) = 
\begin{cases}
0 & s\in [0,a];\\
\frac{s-a}{b-a} & s\in [a,b];\\
\frac{1}{s} & s\in [b,1],
\end{cases}
\end{align*}
where $a<b$ are two numbers between $0$ and $1$. Hence, for $T$ small enough, $(\ref{heatTraceFunctionLimit})$ implies:
\begin{align*}
T N_t\left(- \frac{\log a}{T} \right) \geq T\sum_{j=0}^{\infty}e^{-T\lambda_j(t)} f(e^{-T\lambda_j(t)}) \geq T N_t\left(- \frac{\log b}{T} \right).
\end{align*}
Recall that $N_t(\lambda)$ is the number of $\D_t^2$-eigenvalues smaller or equal than $\lambda$. Subtracting $\alpha \int_0^1 f(s) ds$ in the middle term of the inequality above, we get:
\begin{align*}
T N_t\left(- \frac{\log a}{T} \right) + \alpha \log b &\geq T\sum_{j=0}^{\infty}e^{-T\lambda_j(t)} f(e^{-T\lambda_j(t)}) - \alpha \int_0^1 f(s) ds \\
&\geq T N_t\left(- \frac{\log b}{T} \right) +\alpha \log a,
\end{align*}
which, dividing by $-\log b$ can be rewritten:
\begin{align*}
\frac{T}{-\log a} \frac{\log a}{\log b} N_t\left(- \frac{\log a}{T} \right) - \alpha &\geq \frac{1}{-\log b}\left( T\sum_{j=0}^{\infty}e^{-T\lambda_j(t)} f(e^{-T\lambda_j(t)}) - \alpha \int_0^1 f(s) ds \right) \\
&\geq \frac{T}{-\log b} N_t\left(- \frac{\log b}{T} \right) -\alpha \frac{\log a}{\log b}.
\end{align*}
Since $(\ref{heatTraceFunctionLimit})$ was uniformly in $t\in [0,1]$, when $T\searrow 0$, we obtain two inequalities:
\begin{align*}
&\frac{\log a}{\log b} \liminf_{\lambda\rightarrow 0} \frac{N_t(\lambda)}{\lambda} \geq \alpha,\\
&\alpha \frac{\log a}{\log b} \geq \limsup_{\lambda\rightarrow 0} \frac{N_t(\lambda)}{\lambda},
\end{align*}
both of them uniformly in $t\in [0,1]$. Finally, making $b\rightarrow a$ finishes the proof. 
\end{proof}

\subsection{Selberg Zeta function for non-compact surfaces}
In this section we extend the definition of the Selberg zeta function to non-compact, hyperbolic surfaces of finite area, endowed with a non-trivial spin structure. We are able to show that this function extends analytically to the whole complex plane and, moreover, it respects a functional equation given by a symmetry with respect to the point $1/2\in \CC$. The proof is rather standard, so we are going to sketch it.

Let $M = \G \setminus \HH$ be complete hyperbolic surface of finite area, having $k$ cusps. Further suppose $M$ is endowed with a non-trivial spin structure and let $\varepsilon : \G \longrightarrow \{ \pm 1
\}$ be the associated class function (definition $\ref{caracter}$). Consider the admissible function:
\begin{align*}\label{zetaAdmissibleFunction}
v:\RR \longrightarrow \RR, && v(t):= \frac{e^{-|t|\left(s-1/2\right)}}{2s-1}-\frac{e^{-|t|\left(s_0-1/2 \right)}}{2s_0-1},
\end{align*}
where $s, s_0 \in \CC$ are two complex numbers with $\Re(s), \Re(s_0) >1$ ($s_0$ will be fixed). The Fourier transform of $v$ is the map:
\begin{align*}
u(\xi)=\frac{1}{\xi^2+\left( s-\tfrac{1}{2}\right) ^2}-\frac{1}{\xi^2+\left(s_0-\tfrac{1}{2}\right) ^2},
\end{align*}
a difference between two resolvents. Note that we had to take this difference in order for $v$ to be admissible. Let $Z=Z_{\varepsilon}$ be the Selberg zeta function defined in formula $(\ref{formulaZeta})$
with the respect to the class function $\varepsilon$. Since $\varepsilon$ is fixed, in what follows we will drop it from the notation of the zeta function. A direct computation leads to the following identity:

\begin{lemma}\label{formulaDerivataLogaritmica}
\[
\sum_{[\mu]}\sum_{n=1}^{\infty} \frac{ l(\mu) \varepsilon^n(\mu) e^{-nl(\mu)\left( s-1/2 \right) }}{2 \sinh \left( \tfrac{nl(\mu)}{2} \right) } = \frac{Z'}{Z}(s).
\]
\end{lemma}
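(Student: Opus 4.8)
The plan is to compute the logarithmic derivative $Z'/Z$ directly from the definition $(\ref{formulaZeta})$ by taking the logarithm of the product, differentiating term by term (justified by the absolute convergence of the product, which follows from Lemma $\ref{marginireGeodeziceNonCompact}$ in the half-plane $\Re(s)>1$), and then resumming the resulting double series. First I would write
\[
\log Z(s) = \sum_{[\mu]} \sum_{m=0}^{\infty} \log\left( 1 - \varepsilon(\mu) e^{-l(\mu)(s+m)} \right),
\]
and expand each logarithm as a power series $\log(1-w) = -\sum_{n\geq 1} w^n/n$ with $w = \varepsilon(\mu) e^{-l(\mu)(s+m)}$. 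Using $\varepsilon(\mu)^n = \varepsilon^n(\mu)$ (noting $\varepsilon$ takes values in $\{\pm 1\}$, so this is automatic) this gives a triple sum over $[\mu]$, $m\geq 0$, $n\geq 1$. Differentiating in $s$ brings down a factor $-nl(\mu)$, and the sum over $m$ becomes a geometric series $\sum_{m\geq 0} e^{-nl(\mu)m} = (1-e^{-nl(\mu)})^{-1}$.

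**Key steps.** After these manipulations one obtains
\[
\frac{Z'}{Z}(s) = \sum_{[\mu]} \sum_{n=1}^{\infty} \frac{l(\mu)\,\varepsilon^n(\mu)\, e^{-nl(\mu)s}}{1 - e^{-nl(\mu)}}.
\]
The final step is the algebraic identity
\[
\frac{e^{-nl(\mu)s}}{1-e^{-nl(\mu)}} = \frac{e^{-nl(\mu)(s-1/2)}}{e^{nl(\mu)/2} - e^{-nl(\mu)/2}} = \frac{e^{-nl(\mu)(s-1/2)}}{2\sinh\left(\frac{nl(\mu)}{2}\right)},
\]
obtained by multiplying numerator and denominator by $e^{nl(\mu)/2}$. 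Substituting this into the previous display yields exactly the left-hand side of the claimed identity, completing the proof.

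**Main obstacle.** The only genuinely non-routine point is justifying the term-by-term differentiation and the interchange of the orders of summation in the triple sum. This requires checking that the triple series $\sum_{[\mu],n,m} nl(\mu)\, e^{-nl(\mu)(s+m)}$ converges absolutely and locally uniformly for $\Re(s)>1$. The sum over $m$ contributes the convergent geometric factor, the sum over $n$ converges since $l(\mu)\geq \ell_0>0$ is bounded below by the systole (giving geometric decay in $n$ once $\Re(s)>1/2$), and the sum over conjugacy classes $[\mu]$ converges because the counting function $L(r)$ grows like $e^r$ by Lemma $\ref{marginireGeodeziceNonCompact}$ while each term decays like $e^{-l(\mu)\Re(s)}$ with $\Re(s)>1$. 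Once this bookkeeping is in place the rest is the elementary computation sketched above, which is why the lemma is stated as following from "a direct computation."
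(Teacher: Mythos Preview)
Your proposal is correct and is precisely the ``direct computation'' the paper alludes to without spelling out: take logarithms of the product $(\ref{formulaZeta})$, expand each factor via $\log(1-w)=-\sum_{n\geq1}w^n/n$, differentiate in $s$, sum the resulting geometric series in $m$, and rewrite $(1-e^{-nl(\mu)})^{-1}$ as $e^{nl(\mu)/2}/(2\sinh(nl(\mu)/2))$. Your justification of absolute convergence via Lemma~\ref{marginireGeodeziceNonCompact} and the systole lower bound is exactly what is needed to legitimize the formal manipulations.
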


Applying Theorem $\ref{selbergTraceFormula}$ for $v$ and using the above lemma we get that:
\begin{align}\label{derivataLogaritmicaZ}
\begin{split}
\frac{Z'}{Z}(s)
&= 
\frac{2s-1}{2s_0-1}\frac{Z'}{Z}(s_0)+(2s-1)\sum_{j=0}^{\infty} \left( \frac{1}{\xi_j^2+\left( s-\tfrac{1}{2}\right) ^2}-\frac{1}{\xi_j^2+\left(s_0-\tfrac{1}{2}\right) ^2} \right)  \\
&-\frac{\area(M)}{4\pi}\int_{\RR} \xi \coth (\pi \xi)\left(  \frac{2s-1}{\xi^2+\left( s-\tfrac{1}{2}\right) ^2}-\frac{2s-1}{\xi^2+\left(s_0-\tfrac{1}{2}\right) ^2} \right) d\xi \\
&+ k\log(2)\left(1- \frac{2s-1}{2s_0-1} \right).
\end{split}
\end{align}
To show that $\frac{Z'}{Z}$ extends meromorphically on $\CC$ with simple poles and positive integer residues, we need two simple lemmas. 

\begin{lemma}
The term
\begin{align*}
(2s-1)\sum_{j=0}^{\infty} 
\left( \frac{1}{\xi_j^2+\left( s-\tfrac{1}{2}\right) ^2}-\frac{1}{\xi_j^2+\left(s_0-\tfrac{1}{2}\right) ^2} \right),
\end{align*}
is meromorphic in $s\in \CC$, with simple poles at $\tfrac{1}{2} \pm is_j$. The residue is $m (\lambda_j)$, the multiplicity of the eigenvalue $\lambda_j$. If $0$ lies inside the spectrum of $\D^- \D^+$, the residue at $s=\tfrac{1}{2}$ will be $2m(0)$.
\end{lemma}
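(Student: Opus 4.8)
The plan is to isolate the poles term by term and to check that, after introducing a common denominator, the resulting series converges locally uniformly off a discrete set, so that it represents a meromorphic function on $\CC$. First I would write each summand over a common denominator,
\[
(2s-1)\!\left( \frac{1}{\xi_j^2+(s-\tfrac12)^2} - \frac{1}{\xi_j^2+(s_0-\tfrac12)^2} \right) = (2s-1)\,\frac{(s_0-\tfrac12)^2-(s-\tfrac12)^2}{\bigl(\xi_j^2+(s-\tfrac12)^2\bigr)\bigl(\xi_j^2+(s_0-\tfrac12)^2\bigr)}.
\]
Since $\D^-\D^+\ge 0$ we have $\lambda_j=\xi_j^2\ge 0$, so each $\xi_j$ may be taken real and the only candidate poles lie on the line $\Re s=\tfrac12$, at the points $\tfrac12\pm i\xi_j$. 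By the Weyl law for the Dirac operator on finite-area hyperbolic surfaces (see \cite[Theorem 3]{Moroianu}) one has $\xi_j\sim c\,j^{1/2}$, hence $\lambda_j\sim c^2 j$ and, for $j$ large (so that $|\xi_j^2+(s-\tfrac12)^2|\ge\tfrac12\xi_j^2$ on a given compact set), the displayed summand is $O(j^{-2})$ there. Since the $\xi_j$ tend to infinity, any compact $K\subset\CC$ contains only finitely many of the points $\tfrac12\pm i\xi_j$; after removing those finitely many terms, the tail converges uniformly on $K$, so the series defines a meromorphic function whose poles are contained in $\{\tfrac12\pm i\xi_j\}_j$. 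Note that one cannot split the difference of resolvents, because $\sum_j\bigl(\xi_j^2+(s_0-\tfrac12)^2\bigr)^{-1}$ diverges; the subtraction is exactly what makes the series summable, and it is also the only source of poles since the $s_0$-term is entire in $s$.

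Next I would compute the residues. Fix an eigenvalue $\lambda_{j}>0$ and set $J=\{\,i : \xi_i=\xi_{j}\,\}$, so $|J|=m(\lambda_{j})$. Near $s=\tfrac12+i\xi_{j}$ the only singular summands are those indexed by $i\in J$, and for each of them the $s_0$-resolvent is holomorphic there; factoring $\xi_i^2+(s-\tfrac12)^2=(s-\tfrac12-i\xi_i)(s-\tfrac12+i\xi_i)$ and evaluating gives
\[
\Res_{s=\tfrac12+i\xi_{j}}\ \frac{2s-1}{\xi_i^2+(s-\tfrac12)^2}=\frac{2(\tfrac12+i\xi_{j})-1}{2i\xi_{j}}=1 ,
\]
and the same computation at $s=\tfrac12-i\xi_{j}$ also yields $1$. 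Summing over $i\in J$ we get a simple pole at each of $\tfrac12\pm i\xi_{j}$ with residue $m(\lambda_{j})$ (and since this residue is positive there is no cancellation, so the poles are exactly these points).

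Finally I would treat $\lambda=0$ separately; this is where the numerator $2s-1$ plays a role, because for $\xi_i=0$
\[
(2s-1)\!\left(\frac{1}{(s-\tfrac12)^2}-\frac{1}{(s_0-\tfrac12)^2}\right)=\frac{2}{s-\tfrac12}-\frac{2s-1}{(s_0-\tfrac12)^2},
\]
which has a \emph{simple} pole at $s=\tfrac12$ with residue $2$; summing over the $m(0)$ indices with $\xi_i=0$ gives residue $2m(0)$ at $s=\tfrac12$. I do not expect a genuine obstacle here; the only points requiring care are the non-separability of the two resolvent sums — forcing one to estimate the difference rather than the individual pieces — and the coalescence of the two poles $\tfrac12\pm i\xi_j$ into a single simple pole when $\xi_j=0$.
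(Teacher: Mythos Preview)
Your argument is correct and follows the same line as the paper's proof: each summand $\tfrac{2s-1}{\xi_j^2+(s-1/2)^2}$ has simple poles at $s=\tfrac12\pm i\xi_j$ with residue $1$, and the absolute convergence of the difference series (which you justify via the Weyl law and the $O(j^{-2})$ bound) yields the meromorphic extension. The paper's proof is simply terser—two sentences—whereas you spell out the common-denominator estimate, the local-uniform convergence of the tail, and the separate treatment of the coalescing poles at $s=\tfrac12$ when $\xi_j=0$.
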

\begin{proof}
Clearly $\frac{2s-1}{\xi_j^2+\left( s-\tfrac{1}{2}\right) ^2}$ has simple poles at $s=\tfrac{1}{2} \pm i\xi_j$ with residue $1$. Since the sum is absolutely convergent the conclusion follows.
\end{proof}

\begin{lemma}\label{rescriereDerivataLogaritmica}
The third term in the right-hand side of $(\ref{derivataLogaritmicaZ})$ can be rewritten as:
\begin{align*}
&\frac{\area(M)}{4\pi}\int_{\RR} \xi \coth (\pi \xi)\left(  \frac{2s-1}{\xi^2+\left( s-\tfrac{1}{2}\right) ^2}-\frac{2s-1}{\xi^2+\left(s_0-\tfrac{1}{2}\right) ^2} \right) d\xi  \\
&=
\frac{\area(M)(2s-1)}{4\pi}\left( -(\pi \tan(\pi s) -\pi \tan (\pi s_0)) - \sum_{n=1}^{\infty}  \frac{2n}{\left( s-\tfrac{1}{2} \right)^2-n^2}-\frac{2n}{\left( s_0-\tfrac{1}{2} \right)^2-n^2} \right) \\
&=
-\frac{\area(M)}{2\pi} \left( \frac{2s-1}{2s_0-1} -1 + \sum_{n=1}^{\infty}\left( \frac{2s-1}{s_0-\frac{1}{2}+n} -\frac{2s-1}{s-\frac{1}{2}+n} \right)\right).
\end{align*}\label{integrala}
\end{lemma}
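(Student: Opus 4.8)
The plan is to evaluate the integral on the left-hand side in closed form and then carry out the (elementary) algebra. Write $a:=s-\tfrac12$ and $b:=s_0-\tfrac12$; since $\Re(s),\Re(s_0)>1$ we have $\Re(a),\Re(b)>\tfrac12$. Note that $\tfrac{1}{\xi^2+a^2}-\tfrac{1}{\xi^2+b^2}=\tfrac{b^2-a^2}{(\xi^2+a^2)(\xi^2+b^2)}=O(|\xi|^{-4})$ while $\xi\coth(\pi\xi)$ grows only linearly, so the integrand is $O(|\xi|^{-3})$ and the integral converges absolutely; moreover both sides of the asserted identity are holomorphic in $s$ away from their evident poles, so it suffices to treat $a,b$ real and positive and then invoke analytic continuation.

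First I would compute $I:=\int_{\RR}\xi\coth(\pi\xi)\bigl(\tfrac{1}{\xi^2+a^2}-\tfrac{1}{\xi^2+b^2}\bigr)\,d\xi$ by the residue theorem. The integrand extends meromorphically to $\CC$ with simple poles inside the open upper half-plane exactly at $\xi=in$ for $n\ge1$ (where $\coth(\pi\xi)$ has residue $\tfrac1\pi$) and at $\xi=ia,\ \xi=ib$; there is no pole at $\xi=0$ since $\xi\coth(\pi\xi)\to\tfrac1\pi$ there. The $|\xi|^{-3}$ decay kills the large semicircular arc, so $I$ is $2\pi i$ times the sum of these residues: the poles at $\xi=ia,\ ib$ contribute $\pi\cot(\pi a)-\pi\cot(\pi b)$ (using $\coth(\pi i a)=-i\cot(\pi a)$), and the poles at $\xi=in$ contribute $-\sum_{n\ge1}\bigl(\tfrac{2n}{a^2-n^2}-\tfrac{2n}{b^2-n^2}\bigr)$. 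Rewriting $\cot(\pi a)=-\tan\!\bigl(\pi(a+\tfrac12)\bigr)$ (and the analogue for $b$) and multiplying by $\tfrac{\area(M)}{4\pi}(2s-1)$ then gives the first displayed equality, with $z,z_0$ as in the statement. Equivalently one reaches the same line by substituting the Mittag--Leffler expansion $\xi\coth(\pi\xi)=\tfrac1\pi+\tfrac2\pi\sum_{n\ge1}\tfrac{\xi^2}{\xi^2+n^2}$ and integrating term by term, using $\int_{\RR}\tfrac{d\xi}{\xi^2+c^2}=\tfrac\pi c$ and $\int_{\RR}\tfrac{\xi^2\,d\xi}{(\xi^2+n^2)(\xi^2+c^2)}=\tfrac{\pi}{c+n}$ for $\Re(c)>0$.

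Next I would pass to the resolvent form. Substituting $\pi\cot(\pi c)=\tfrac1c+\sum_{n\ge1}\tfrac{2c}{c^2-n^2}$ for $c=a$ and $c=b$ into the expression for $I$ and combining the two contributions summand by summand, the elementary identity $\tfrac{2(c-n)}{c^2-n^2}=\tfrac{2}{c+n}$ collapses everything to $I=\tfrac1a-\tfrac1b+2\sum_{n\ge1}\bigl(\tfrac{1}{a+n}-\tfrac{1}{b+n}\bigr)$; the regrouping is legitimate because, although $\sum_n\tfrac{2n}{c^2-n^2}$ and $\sum_n\tfrac{2}{c+n}$ diverge separately, the paired summand $\tfrac{2}{a+n}-\tfrac{2}{b+n}$ is $O(n^{-2})$, so one organizes it through the symmetric partial sums defining $\pi\cot$. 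Finally, multiplying by $\tfrac{\area(M)}{4\pi}(2s-1)$ and using $2s-1=2(s-\tfrac12)=2a$ --- so that $(2s-1)/a=2$, $(2s-1)/b=2(2s-1)/(2s_0-1)$ and $(2s-1)/(a+n)=(2s-1)/(s-\tfrac12+n)$ --- and absorbing the factor $2$ into $\tfrac{\area(M)}{4\pi}$ produces the last displayed expression (up to the overall sign with which this term enters $(\ref{derivataLogaritmicaZ})$).

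The only genuinely non-mechanical points here are analytic rather than algebraic. In the Mittag--Leffler route one must justify interchanging the sum over $n$ with the integral over $\xi$, even though the partial sums of $\xi\coth(\pi\xi)$ are not uniformly integrable against a kernel decaying only like $|\xi|^{-4}$; in the telescoping step one must justify rearranging series that converge only after the $s$- and $s_0$-pieces are paired. I expect both obstacles to be routine --- handled by fixing symmetric partial sums $\sum_{|n|\le N}$ and applying dominated convergence on the tails --- while the residue computation and the final bookkeeping are entirely standard.
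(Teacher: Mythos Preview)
Your approach is essentially the same as the paper's: compute the integral by residues in the upper half-plane (poles at $\xi=in$ and at $\xi=i(s-\tfrac12),\,i(s_0-\tfrac12)$), then pass from $\tan/\cot$ to the resolvent sum via the partial-fraction expansion of $\pi\cot(\pi c)$ --- the paper phrases this last step as an application of the identity~\eqref{identitateTanh} to $\tan(\pi s)=-i\tanh(i\pi s)$, which is the same manipulation. Your computation is correct, and the sign discrepancy you flag at the end is a typo in the paper's displayed formula (the first and second lines of the lemma equal the \emph{negative} of the third), not an error in your argument.
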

\begin{proof}
The first equality can be seen as follows. The integrand has simple poles at $\xi=in$, $\xi=i\left( s-\tfrac{1}{2}\right)$ and at $\xi=i\left( s_0-\tfrac{1}{2}\right)$, for $n$  a positive integer. Moving up the integral line we can see that its value converges to $0$, since the integrand's absolute value is bounded by $cn^{-3}$, for some constant $c>0$. Hence, the integral can be computed as the sum of the residues. To obtain the final equality, we apply $(\ref{identitateTanh})$ for $\tan(\pi s)= -i\tanh(i\pi s)$.
\end{proof}

Thus, $(\ref{derivataLogaritmicaZ})$ implies that the logarithmic derivative of $Z$ extends meromorphically to $\CC$. Moreover, it has simple poles with positive integers residues, hence $Z$ extends analytically to $\CC$. We have just proved the following:

\begin{theorem} \label{extindereDerivataLogaritmica}
The logarithmic derivative of the zeta function corresponding to $\D^-\D^+$ on a complete hyperbolic surface of finite area extends meromorphically to $\CC$ with simple poles and positive integer residues:
\begin{itemize}
\item $z=\tfrac{1}{2} \pm i\xi$ for all $\xi^2=\lambda_j\in \spec(\D^{-}\D^{+})$ with residue $m(\lambda_j)$;
\item $z=\tfrac{1}{2}$ with residue $2m( 0) $, if $0\in \spec(\D^{-}\D^{+})$;
\item $z=-n+\tfrac{1}{2}$ for all positive integers $n$, with residue $\tfrac{n\area(M)}{\pi}$, (which is an integer, from Gauss-Bonnet).
\end{itemize}
It follows that $Z$ extends holomorphically to $\CC$.
\end{theorem}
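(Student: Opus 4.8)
The strategy is to continue the identity $(\ref{derivataLogaritmicaZ})$, which holds a priori for $\Re(s)>1$ and writes $\frac{Z'}{Z}(s)$ as a sum of four terms, and to continue each term separately. The first term $\frac{2s-1}{2s_0-1}\frac{Z'}{Z}(s_0)$ and the last term $k\log(2)\bigl(1-\frac{2s-1}{2s_0-1}\bigr)$ are affine in $s$ (with $s_0$ fixed, $\Re(s_0)>1$, and $Z$ holomorphic and zero-free there), hence entire. The eigenvalue sum is handled by the first of the two lemmas preceding the theorem: it is meromorphic on $\CC$ with simple poles at $s=\tfrac12\pm i\xi_j$ of residue $m(\lambda_j)$, the residue being $2m(0)$ at $s=\tfrac12$ when $0\in\spec(\D^-\D^+)$. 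The integral term is rewritten via Lemma \ref{rescriereDerivataLogaritmica} as a constant multiple of $\frac{\area(M)}{2\pi}$ times an explicit sum of elementary rational functions, from which one reads off simple poles at $s=\tfrac12-n$, $n\geq1$, of residue $\tfrac{n\area(M)}{\pi}$. Collecting these, $\frac{Z'}{Z}$ extends meromorphically to $\CC$ with exactly the three families of simple poles listed.

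Next I would check that all residues are positive integers. For the poles at $\tfrac12\pm i\xi_j$ this is immediate since the residue is a multiplicity. For the poles at $\tfrac12-n$, Gauss--Bonnet gives $\area(M)=2\pi(2g-2+k)$ for a finite-area hyperbolic surface of genus $g$ with $k$ cusps, so $\tfrac{n\area(M)}{\pi}=2n(2g-2+k)$, a positive integer because $2g-2+k\geq1$. I would also carefully track the signs through $(\ref{derivataLogaritmicaZ})$ and Lemma \ref{rescriereDerivataLogaritmica} to confirm this residue enters with the sign $+\tfrac{n\area(M)}{\pi}$; the positivity is precisely what the concluding step uses.

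To pass from ``$\frac{Z'}{Z}$ meromorphic with simple poles of positive integer residue'' to ``$Z$ entire'', I would argue by analytic continuation. The product $(\ref{formulaZeta})$ converges absolutely on $\Re(s)>1$ (each factor $1-\varepsilon(\g)e^{-l(\mu)(s+m)}$ is nonzero there, and the geodesic lengths grow like $\log$ by Lemma \ref{marginireGeodeziceNonCompact}), so $Z$ is holomorphic and zero-free on $\Re(s)>1$ and admits a holomorphic logarithm with derivative $\frac{Z'}{Z}$. Fix a base point $s_\ast$ there. Along any path in $\CC$ avoiding the discrete pole set $\mathcal P$ of $\frac{Z'}{Z}$, the function $\exp\!\bigl(\log Z(s_\ast)+\int \tfrac{Z'}{Z}\bigr)$ analytically continues $Z$; by the residue theorem the integral around any loop in $\CC\setminus\mathcal P$ lies in $2\pi i\,\ZZ$ because the residues are integers, so the continuation is monodromy-free and yields a single-valued holomorphic $\widetilde Z$ on $\CC\setminus\mathcal P$. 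At each $s_0\in\mathcal P$ with residue $m_0\in\ZZ_{>0}$, writing $\frac{Z'}{Z}=\frac{m_0}{s-s_0}+h(s)$ with $h$ holomorphic near $s_0$ gives $\frac{d}{ds}\log\!\bigl((s-s_0)^{-m_0}\widetilde Z(s)\bigr)=h(s)$, so $(s-s_0)^{-m_0}\widetilde Z$ is holomorphic and nonvanishing near $s_0$; hence $\widetilde Z$ extends holomorphically across $s_0$ with a zero of order $m_0$ and, in particular, has no pole. Thus $Z$ extends to an entire function.

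Given that $(\ref{derivataLogaritmicaZ})$ and the two lemmas are already in place, the only genuinely delicate point in this argument is the contour-shift computation inside Lemma \ref{rescriereDerivataLogaritmica} together with the attendant sign and residue bookkeeping; once the pole--residue list is secured, the remaining steps (triviality of monodromy, and positivity of residues forcing $Z$ to be entire rather than merely meromorphic) are routine complex analysis.
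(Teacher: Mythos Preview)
Your proposal is correct and follows essentially the same route as the paper: the paper's proof is precisely the two lemmas preceding the theorem together with the observation that $(\ref{derivataLogaritmicaZ})$ then gives a meromorphic extension with simple poles and positive integer residues, ``hence $Z$ extends analytically to $\CC$.'' You have supplied the monodromy/removable-singularity argument for that last implication, which the paper leaves implicit, and you have spelled out the Gauss--Bonnet check explicitly; both additions are welcome but do not change the strategy.
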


\begin{theorem}
In the context of Theorem \ref{extindereDerivataLogaritmica}, the function:
\begin{align*}
\tilde{Z}(s)
:=
Z(s)\exp\left( -\frac{\area (M)}{2} \int_{1/2}^{s} \left(w-\frac{1}{2}\right)\tan(\pi w)dw \right)2^{(1-2s)k},
\end{align*}
satisfies the following functional equation:
\begin{align*}
\tilde{Z}(s) = \tilde{Z}(1-s),
\end{align*}
for any $s\in \CC$.
\end{theorem}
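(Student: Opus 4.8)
The plan is to work with logarithmic derivatives and reduce the functional equation to the statement that $\frac{\tilde Z'}{\tilde Z}$ is \emph{odd about the point} $s=\tfrac12$, i.e.\ that $\frac{\tilde Z'}{\tilde Z}(s)+\frac{\tilde Z'}{\tilde Z}(1-s)\equiv 0$ as meromorphic functions on $\CC$. Differentiating the definition of $\tilde Z$ gives
\[
\frac{\tilde Z'}{\tilde Z}(s)=\frac{Z'}{Z}(s)-\frac{\area(M)}{2}\left(s-\tfrac12\right)\tan(\pi s)-2k\log 2,
\]
and I would then substitute the expression $\eqref{derivataLogaritmicaZ}$ for $\frac{Z'}{Z}(s)$, rewriting the integral term in closed form by Lemma~\ref{rescriereDerivataLogaritmica}. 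This presents $\frac{\tilde Z'}{\tilde Z}$ as a completely explicit meromorphic function built from three kinds of terms: (a) the resolvent sum $(2s-1)\sum_j(\frac{1}{\xi_j^2+(s-1/2)^2}-\frac{1}{\xi_j^2+(s_0-1/2)^2})$ together with $\tfrac{2s-1}{2s_0-1}\tfrac{Z'}{Z}(s_0)$; (b) an $\area(M)$-multiple of $(s-\tfrac12)\tan(\pi s)$ (equivalently $(s-\tfrac12)\cot(\pi(s-\tfrac12))$) and the accompanying $n$-sum produced by the residue computation in Lemma~\ref{rescriereDerivataLogaritmica}; and (c) the constant $2k\log 2$ with its $s_0$-companion.

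Next I would verify oddness piece by piece. Every summand in (a) is sent to its negative by $s\mapsto 1-s$, since $(s-\tfrac12)^2$ is invariant while $2s-1\mapsto-(2s-1)$; hence the eigenvalue part and the $\tfrac{Z'}{Z}(s_0)$ part cancel in $\frac{\tilde Z'}{\tilde Z}(s)+\frac{\tilde Z'}{\tilde Z}(1-s)$, which is the analytic reflection of the fact that the spectral zeros of $Z$ already sit on the line $\Re(s)=\tfrac12$. For (b) one uses $\tan(\pi(1-s))=-\tan(\pi s)$ and $((1-s)-\tfrac12)=-(s-\tfrac12)$ to see that the $\area(M)$-contribution of $\frac{Z'}{Z}$ cancels the logarithmic derivative of the factor $\exp(-\tfrac{\area(M)}{2}\int_{1/2}^{s}(w-\tfrac12)\tan(\pi w)\,dw)$, after collapsing the $n$-sum with the partial fraction expansion of $\pi\cot(\pi(s-\tfrac12))$. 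For (c), the $2k\log 2$ inherited from the $-k\log(2)v(0)$ term of Theorem~\ref{selbergTraceFormula} is cancelled by the logarithmic derivative of $2^{(1-2s)k}$. Thus $\frac{\tilde Z'}{\tilde Z}(s)+\frac{\tilde Z'}{\tilde Z}(1-s)\equiv 0$.

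Granting this, set $g(s):=\tilde Z(s)/\tilde Z(1-s)$, which is meromorphic on $\CC$ because $\tilde Z$ is holomorphic (Theorem~\ref{extindereDerivataLogaritmica}). Its logarithmic derivative is $\frac{\tilde Z'}{\tilde Z}(s)+\frac{\tilde Z'}{\tilde Z}(1-s)\equiv 0$, and since the complement of the zeros and poles of $g$ in $\CC$ is connected, $g$ is a nonzero constant $c$; interchanging $s\leftrightarrow 1-s$ forces $c^2=1$. To rule out $c=-1$ I would inspect the unique fixed point $s=\tfrac12$: by Theorem~\ref{extindereDerivataLogaritmica} the logarithmic derivative of $Z$ has at worst a simple pole there with residue $2m(0)$, the exponential factor contributes no pole (the integrand $(w-\tfrac12)\tan(\pi w)$ is bounded near $w=\tfrac12$), and $2^{(1-2s)k}$ is entire and non-vanishing, so $\tilde Z$ vanishes at $\tfrac12$ to the even order $2m(0)$ (with the convention $m(0)=0$ if $0\notin\spec(\D^-\D^+)$). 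Writing $\tilde Z(s)=a(s-\tfrac12)^{2m(0)}(1+O(s-\tfrac12))$ near $\tfrac12$ and using $((1-s)-\tfrac12)^{2m(0)}=(s-\tfrac12)^{2m(0)}$, we get $g(s)\to 1$ as $s\to\tfrac12$, so $c=1$ and $\tilde Z(s)=\tilde Z(1-s)$.

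The step I expect to be the main obstacle is the bookkeeping in the second paragraph: once the integral term of $\eqref{derivataLogaritmicaZ}$ is replaced by its meromorphic continuation, one must check that every sign, and every factor of $2$ and $\pi$, matches up so that the $\area(M)$-terms and the $\log 2$-terms of $\frac{Z'}{Z}(s)+\frac{Z'}{Z}(1-s)$ are exactly absorbed by the two explicit factors defining $\tilde Z$. One must also be careful that substituting $s\mapsto 1-s$ is only legitimate \emph{after} the integral has been put in the closed form of Lemma~\ref{rescriereDerivataLogaritmica}, because the integral $\int_{\RR}\xi\coth(\pi\xi)(\ldots)\,d\xi$ in its raw form is not holomorphic across the line $\Re(s)=\tfrac12$ (its poles at $\xi=\pm i(s-\tfrac12)$ cross the contour), so the naïve reflection of the raw expression would give the wrong answer.
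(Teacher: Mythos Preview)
Your proposal is correct and follows essentially the same route as the paper: both arguments compute $\frac{Z'}{Z}(s)+\frac{Z'}{Z}(1-s)$ from the trace-formula identity \eqref{derivataLogaritmicaZ} together with Lemma~\ref{rescriereDerivataLogaritmica}, observing that the spectral sum, the $\tfrac{Z'}{Z}(s_0)$ term, and the $n$-sum are odd about $s=\tfrac12$ while the $\tan(\pi s)$ and $\log 2$ contributions are even, and then pass to the functional equation. Your treatment of the integration constant (showing $g\equiv c$, $c^2=1$, and ruling out $c=-1$ via the even vanishing order of $\tilde Z$ at $s=\tfrac12$) is in fact more careful than the paper's, which simply integrates from $\tfrac12$ and declares ``the conclusion follows immediately''---an argument that is formally delicate precisely when $Z(\tfrac12)=0$.
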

\begin{proof}
We start from formula $(\ref{derivataLogaritmicaZ})$ and make use of Lemma $\ref{rescriereDerivataLogaritmica}$:
\begin{align*}
\frac{Z'}{Z}(s) + \frac{Z'}{Z}(1-s)
&=
-\frac{\area(M)}{4\pi}(2s-1)\left( -\pi \tan(\pi s) + \pi \tan(\pi-\pi s) \right) + 4k\log(2)\\
&=\area(M)\left( s-\frac{1}{2} \right)\tan(\pi s)+4k\log(2).
\end{align*}
Integrating, we obtain:
\begin{align*}
\log \left( \frac{Z(s)}{Z(1-s)} \right)
=
\int_{1/2}^{s}\area (M)\left( w-\frac{1}{2} \right)\tan(\pi w)dw + (2s-1)2k\log(2),
\end{align*}
and the conclusion follows immediately.
\end{proof}
\noindent
In the compact case, the existence of such a functional equation was alluded in \cite{BolteStiepanSelbergForDirac} but not written down explicitly.

From the definition, it is clear that $Z$ depends on the surface $M$ and, consequently, on the hyperbolic metric $g$. To avoid confusion, we shall denote this dependency by rewriting the function as $Z(s,(M,g))$. A natural question that arises is what happens with the zeta function when a compact surface goes through a pinching process. In other words, what happens with the zeta function along a path in the Teichm\"uller space going to the boundary. The lemma we present below follows the ideas of a similar result in \cite{SchulzeZetaLaplacian}.

Suppose the compact surface $M$ goes through a pinching process along one geodesic $\eta$. Further suppose that $\varepsilon(\eta)=-1$ for all $t$ (recall that this hypothesis means that the spectrum of the Dirac operator is discrete at the boundary). Then, the zeta function diverges. The divergent term, when $t\rightarrow 0$, is given by the product:
\begin{align*}
Z_{\eta}(s,(M,g_t)):=\prod_{k=0}^{\infty}\left( 1+ e^{-(s+k)l_t(\eta)} \right)^2,
\end{align*}
for $\Re(s)>1$. We take the square because the same product appears twice in the definition of $Z$, one for each orientation of $\eta$. 
The asymptotic behaviour of $Z$ at the boundary of the Teichm\" uller space is given by the following result:
\begin{lemma}\label{ZetaExponentialGrowth}
For $s\in \CC$ with $\Re(s)>1$,
\begin{align*}
Z_\eta(s,(M,g_t))e^{-\pi^2/6l_t(\eta)} \rightarrow 2^{1-2s},
\end{align*}
as $t$ converges to $0$.
\end{lemma}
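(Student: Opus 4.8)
The plan is to compute $\log Z_\eta(s,(M,g_t))$ to two orders in $l:=l_t(\eta)$ as $l\to 0$. Writing $F(x):=\log\bigl(1+e^{-(s+x)l}\bigr)$ we have $\log Z_\eta(s,(M,g_t))=2\sum_{j=0}^\infty F(j)$, and I would evaluate this sum by the first-order Euler--Maclaurin formula
\[
\sum_{j=0}^\infty F(j)=\int_0^\infty F(x)\,dx+\tfrac12 F(0)+\int_0^\infty\Bigl(\{x\}-\tfrac12\Bigr)F'(x)\,dx ,
\]
which is legitimate since $F,F'\to 0$ at $\infty$ (because $\Re(s)>0$) and $F'\in L^1(\RR_{\ge 0})$. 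The heuristic is that the first term carries the divergent piece $\tfrac{\pi^2}{6l}$, the second contributes a $\log 2$, and the last is $O(l)$.

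For the main term, substituting $v=xl$ and expanding the logarithm in a power series (valid since $|e^{-sl}e^{-v}|\le e^{-\Re(s)l}<1$ for $v\ge 0$) gives, with $\li_2$ the dilogarithm,
\[
\int_0^\infty F(x)\,dx=\frac1l\int_0^\infty\log\bigl(1+e^{-sl}e^{-v}\bigr)\,dv=-\frac1l\,\li_2\!\bigl(-e^{-sl}\bigr).
\]
Since $\li_2(-1)=-\pi^2/12$ and $\frac{d}{dl}\bigl[-\li_2(-e^{-sl})\bigr]=-s\log(1+e^{-sl})$, which equals $-s\log 2$ at $l=0$ (and $\li_2$ is holomorphic near $-1$, so the expansion is justified), a first-order Taylor expansion yields $-\li_2(-e^{-sl})=\tfrac{\pi^2}{12}-(s\log 2)\,l+O(l^2)$, hence $\int_0^\infty F\,dx=\tfrac{\pi^2}{12l}-s\log 2+O(l)$. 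Combined with $\tfrac12 F(0)=\tfrac12\log(1+e^{-sl})=\tfrac12\log 2+O(l)$ this gives
\[
\log Z_\eta(s,(M,g_t))=\frac{\pi^2}{6\,l_t(\eta)}+(1-2s)\log 2+O\bigl(l_t(\eta)\bigr),
\]
and exponentiating and letting $t\to 0$ produces the claimed limit $2^{1-2s}$.

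The step I expect to be the real obstacle is controlling the Euler--Maclaurin remainder $R:=\int_0^\infty(\{x\}-\tfrac12)F'(x)\,dx$, because the naive bound $|R|\le\tfrac12\|F'\|_{L^1}$ is useless here: one only has $\|F'\|_{L^1}=O(1)$, not $o(1)$. Instead I would integrate by parts once more, using that $\tfrac12 B_2(\{x\})$ is a continuous primitive of $\{x\}-\tfrac12$; this produces a boundary term $-\tfrac1{12}F'(0)=O(l)$ and the integral $-\tfrac12\int_0^\infty B_2(\{x\})F''(x)\,dx$, and a direct computation gives $\|F''\|_{L^1}=l^2\int_0^\infty |e^{(s+x)l}|\,|e^{(s+x)l}+1|^{-2}\,dx=O(l)$, so $R=O(l)$. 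For non-real $s$ one must in addition check that the ray $\{(s+x)l:x\ge 0\}$ stays bounded away from the poles of $w\mapsto e^w(e^w+1)^{-2}$ at $w\in i\pi(2\ZZ+1)$, which holds because $\Im(s)\,l$ is small; alternatively one can prove the lemma for real $s>1$ first and upgrade to all $s$ with $\Re(s)>1$ by Vitali's theorem, using the bound $|Z_\eta(s,(M,g_t))|\le Z_\eta(\Re(s),(M,g_t))$ together with the boundedness of $Z_\eta(\Re(s),(M,g_t))e^{-\pi^2/6 l_t(\eta)}$ already obtained from the real case. The remaining computations — the dilogarithm expansion near $-1$ and the two $L^1$ estimates — are routine.
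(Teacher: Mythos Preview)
Your proof is correct and takes a genuinely different route from the paper's. The paper expands $\log(1+e^{-(s+k)l})$ as a power series in $e^{-(s+k)l}$ and sums over $k$ first, obtaining $\log Z_\eta=-2\sum_{n\ge 1}\frac{(-e^{-sl})^n}{n}\cdot\frac{1}{1-e^{-nl}}$; it then splits $\frac{1}{1-e^{-nl}}=\frac12+\frac{1}{nl}+\bigl(\frac{1}{e^{nl}-1}-\frac{1}{nl}+\frac12\bigr)$, so that the $\frac12$ piece gives $\log(1+e^{-sl})\to\log 2$, the $\frac{1}{nl}$ piece gives exactly your dilogarithm term $-\tfrac{2}{l}\li_2(-e^{-sl})$, and the remaining bracket, after separating even and odd $n$, produces two Riemann sums that converge to the same integral with opposite signs and hence cancel. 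Your Euler--Maclaurin argument reaches the same dilogarithm via $\int_0^\infty F$ and isolates the extra $\log 2$ as the boundary correction $\tfrac12 F(0)$; the second integration by parts with $B_2(\{x\})$ replaces the paper's Riemann-sum cancellation and has the advantage of yielding a quantitative $O(l)$ remainder rather than mere convergence. The paper's decomposition is slightly more elementary (no Bernoulli polynomials, no $L^1$ estimate on $F''$), while your approach is more systematic and makes the error term explicit; either way the substantive content---that the divergence is exactly $\pi^2/6l$ coming from $\li_2(-1)=-\pi^2/12$---is the same.
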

\begin{proof}
Rewrite the logarithmic derivative using the Taylor series of the exponential:
\begin{align*}
\log Z_{\eta}(s,(M,g_t)) &= \sum_{k=0}^{\infty} 2\log \left( 1+ e^{-(s+k)l_t(\eta)} \right) = \sum_{k=0}^{\infty}\sum_{n=1}^{\infty} -\frac{2(-1)^n}{n}e^{-(s+k)nl_t(\eta)} \\
&=\sum_{n=1}^{\infty} -\frac{2(-e^{-sl_t(\eta)})^n}{n}\frac{1}{1-e^{-nl_t(\eta)}}.
\end{align*}
Now, we split the sum into four parts, using
\begin{align*}
\frac{1}{1-e^{-nl_t(\eta)}} =\frac{1}{2}+ \frac{1}{nl_t(\eta)} +\left(\frac{1}{e^{nl_t(\eta)}-1} -\frac{1}{nl_t(\eta)} +\frac{1}{2} \right).
\end{align*}
Hence, we obtain:
\begin{align}
\log Z_{\eta}(s,(M,g_t)) &= \sum_{n=1}^{\infty} -\frac{(-e^{-sl_t(\eta)})^n}{n} \label{T1} \\
&- \frac{2}{l} \sum_{n=1}^{\infty} \frac{(-e^{-sl_t(\eta)})^n}{n^2} \label{T2} \\
&+\sum_{k=1}^{\infty}2l_t(\eta) \frac{e^{sl_t(\eta)(2k-1)}}{(2k-1)l_t(\eta)}\left(\frac{1}{e^{(2k-1)l_t(\eta)}-1} -\frac{1}{(2k-1)l_t(\eta)} +\frac{1}{2} \right) \label{T3} \\
&-\sum_{k=1}^{\infty}2l_t(\eta) \frac{e^{sl_t(\eta)2k}}{2kl_t(\eta)}\left(\frac{1}{e^{2kl_t(\eta)}-1} -\frac{1}{2kl_t(\eta)} +\frac{1}{2} \right) \label{T4}
\end{align}
The first term $(\ref{T1})$ equals the logarithm: $\log(1+e^{-sl_t(\eta)})$, which is smooth at $t=0$. Both $(\ref{T3})$ and $(\ref{T4})$ are Riemann sums. As $t$ approaches $0$, the term $(\ref{T3})$ converges to
\[
\int_0^{\infty} \frac{e^{-sx}}{x}\left(\frac{1}{e^x-1}-\frac{1}{x}+\frac{1}{2}\right)dx,
\]
and $(\ref{T4})$ converges to the opposite of the above integral. 
Since $t\mapsto \tfrac{1}{e^x-1}-\tfrac{1}{x}+\tfrac{1}{2}$ is of order $\mathcal{O}(x^2)$ near $0$, this integral is finite. Hence, at $t=0$, $(\ref{T3})$ and $(\ref{T4})$ will cancel out.
Finally, the second term $(\ref{T2})$ yields the dilogarithm, $-\frac{2}{l_t(\eta)} \li_2(-e^{-sl_t(\eta)})$, with a simple pole at $l_t(\eta)=0$. Using the (obvious) equalities:
\begin{align*}
\li_2(-1)=-\frac{\pi^2}{12}, &&
\li_2'(x) = -\frac{\log(1-x)}{x},
\end{align*}
we can deduce that:
\[
-\frac{2}{l_t(\eta)}\li_2(-e^{-sl_t(\eta)}) - \frac{\pi^2}{6l_t(\eta)} \rightarrow -2s\log(2).
\]
Finally, putting everything together, we obtain as $t\rightarrow 0$:
\[
\log Z_{\eta} (s,(M,g_t)) -\frac{\pi^2}{6l_t(\eta)} \rightarrow (1-2s)\log(2).
\qedhere
\]
\end{proof}
\subsection{The convergence of the Selberg Zeta function under a pinching process}
The previous result implies that $Z$ diverges when approaching the boundary of the Teich\" muller space. However, this is not the case for its logarithmic derivative. Remember that $M$ goes through a pinching process along the simple geodesic $\eta$. One can easily see that $\frac{Z'}{Z} (s, (M,g_t) \rightarrow \frac{Z'}{Z} (s, (M,g_0)) - 2\log (2)$ as $t$ goes to $0$, uniformly on compacts in the half-plane $\Re(s)>1$. Indeed, if $[\mu']$ is the primitive class associated to the pinched geodesic $\eta$, rewrite Lemma \ref{formulaDerivataLogaritmica}
\begin{align*}
\frac{Z'}{Z}(s,(M,g_t))
&=
2\sum_{n=1}^{\infty}\frac{l_t(\mu')\varepsilon^n(\mu')e^{-nl_t(\mu')\left( s-1/2 \right)}}{2 \sinh \left( \tfrac{nl_t(\mu')}{2} \right) }
+
\sum_{[\g]\neq [\mu']}\sum_{n=1}^{\infty} \frac{ l_t(\g) \varepsilon^n(\g) e^{-nl_t(\g)\left( s-1/2 \right) }}{2 \sinh \left( \tfrac{nl_t(\g)}{2} \right) },
\end{align*}
where $[\g]$ runs along all conjugacy classes of primitive elements. As in the proof of Theorem \ref{limitSelbergFormula}, when $t\rightarrow 0$, the right hand side has a limit:
\begin{align*}
&2\sum_{n=1}^{\infty}\frac{l_t(\mu')\varepsilon^n(\mu')e^{-nl_t(\mu')\left( s-1/2 \right)}}{2 \sinh \left( \tfrac{nl_t(\mu')}{2} \right)} \rightarrow -2\log(2); \\
&\sum_{[\g]\neq [\mu']}\sum_{n=1}^{\infty} \frac{ l_t(\g) \varepsilon^n(\g) e^{-nl_t(\g)\left( s-1/2 \right) }}{2 \sinh \left( \tfrac{nl_t(\g)}{2} \right)} 
\rightarrow
\sum_{[\g] \text{ hyperbolic}}\sum_{n=1}^{\infty} \frac{ l_0(\g) \varepsilon^n(\g) e^{-nl_0(\g)\left( s-1/2 \right) }}{2 \sinh \left( \tfrac{nl_0(\g)}{2} \right) }.
\end{align*}
Writing $(\ref{derivataLogaritmicaZ})$ in the compact case (i.e. $k=0$)
\begin{align}\label{derivataLogaritmicaMeromorphicExtension}
\begin{split}
\frac{Z'}{Z}(s,(M,g_t)) 
&=
\frac{2s-1}{2s_0-1}\frac{Z'}{Z}(s_0,(M,g_t))\\
&+
(2s-1)\sum_{j=0}^{\infty} \left( \frac{1}{\xi_j^2(t)+\left( s-\tfrac{1}{2}\right) ^2}-\frac{1}{\xi_j^2(t)+\left(s_0-\tfrac{1}{2}\right) ^2} \right) \\
&-
\frac{\area(M)}{4\pi}\int_{\RR} \xi \coth (\pi \xi)\left(  \frac{2s-1}{\xi^2+\left( s-\tfrac{1}{2}\right) ^2}-\frac{2s-1}{\xi^2+\left(s_0-\tfrac{1}{2}\right) ^2} \right) d\xi,
\end{split}
\end{align}
one can see that $\frac{Z'}{Z}(s,(M,g_t))$ has a meromorphic extension on the whole complex plane. We show below that:
\begin{align*}
\lim_{t\rightarrow 0} \frac{Z'}{Z} (s, (M,g_t) = \frac{Z'}{Z} (s, (M,g_0)) - 2\log (2),
\end{align*}
holds for $s$ in compacts on $\CC$ which do not contain the poles of the limiting zeta function. The way to tackle such a convergence is by combining the convergence of the spectrum under degenerations, a result due to Pf\" affle \cite{Pfaffle}, together with the uniform Weyl law  (Theorem $\ref{uniformWeylLaw}$). Translated into our context ($M$ goes through a pinching process), Pf\" affle's result reads:
\begin{theorem}[\cite{Pfaffle}, Theorem 1]\label{resultPfaffle}
For $\epsilon>0$ and $m$ a positive integer there exists $t_{\epsilon, m}>0$ such that for any $0 \leq t \leq t_{\epsilon, m}$ we have $|\xi_j(t) - \xi_j(0)| \leq \epsilon$, where $0 \leq j \leq m$ and $\xi_j(t)$ is the $j$-th eigenvalue of $\D_t$.
\end{theorem}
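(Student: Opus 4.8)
The plan is to obtain Theorem~\ref{resultPfaffle} directly from the spectral convergence of Theorem~\ref{convergentaSpectru}, so that it need not be imported from \cite{Pfaffle}; the only extra ingredient is the discreteness of the limiting spectrum, which holds precisely because the spin structure along the pinched geodesic is non-trivial. Since $\D^-\D^+$ and $\D^+\D^-$ are isospectral on a surface, one has $\spec(\D_t)=\{\pm\sqrt{\lambda}:\lambda\in\spec(\D_t^-\D_t^+)\}$ with the obvious multiplicities, so it suffices to prove the analogous convergence for the nonnegative eigenvalues $\lambda_0(t)\le\lambda_1(t)\le\cdots$ of $\D_t^-\D_t^+$, counted with multiplicity. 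Applying Theorem~\ref{convergentaSpectru} to the admissible functions $u_T$, $T>0$, attached to $(\ref{heatFamilyForSelberg})$ gives, for every fixed $T>0$,
\begin{align*}
\sum_{j=0}^{\infty}e^{-T\lambda_j(t)}\;\longrightarrow\;\sum_{j=0}^{\infty}e^{-T\lambda_j(0)}=:H(T)<\infty\qquad(t\to 0),
\end{align*}
the finiteness of $H(T)$, as well as the discreteness of $\spec(\D_0^-\D_0^+)$ with finite multiplicities, being guaranteed by Moroianu's Weyl law \cite[Theorem~3]{Moroianu} on the finite-area limit surface. In particular $t\mapsto\sum_j e^{-T\lambda_j(t)}$ is bounded on some interval $(0,t_0(T)]$.

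Second, I would upgrade this to vague convergence of the spectral measures $\mu_t:=\sum_j\delta_{\lambda_j(t)}$ on $[0,\infty)$. Fix $f\in C_c([0,\infty))$, $\delta>0$ and $T_0>0$. The non-unital algebra of finite linear combinations of the functions $\lambda\mapsto e^{-T\lambda}$, $T>0$, lies in $C_0([0,\infty))$, separates points and vanishes nowhere, hence is dense in $C_0([0,\infty))$ by Stone--Weierstrass; choose such a combination $h=\sum_i c_i e^{-T_i\lambda}$, $T_i>0$, with $\|fe^{T_0\lambda}-h\|_\infty<\delta$. Then $g:=he^{-T_0\lambda}=\sum_i c_i e^{-(T_i+T_0)\lambda}$ is again a finite combination of exponentials with positive exponents, and
\begin{align*}
\Big|\int f\,d\mu_t-\int g\,d\mu_t\Big|\le\delta\int e^{-T_0\lambda}\,d\mu_t\le\delta\!\sup_{0<t\le t_0(T_0)}\sum_j e^{-T_0\lambda_j(t)}=:\delta\,C_{T_0},
\end{align*}
and the same bound holds with $\mu_0$ in place of $\mu_t$; since $\int e^{-s\lambda}\,d\mu_t\to\int e^{-s\lambda}\,d\mu_0$ for every $s>0$ (this is again Theorem~\ref{convergentaSpectru}), we obtain $\limsup_{t\to0}|\int f\,d\mu_t-\int f\,d\mu_0|\le 2\delta C_{T_0}$, and letting $\delta\to0$ gives $\mu_t\to\mu_0$ vaguely.

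Third, I would read off the eigenvalue convergence from the portmanteau theorem. Since $\spec(\D_0^-\D_0^+)$ is discrete, given $\epsilon>0$ and $m\in\NN$ (enlarged if necessary so that $\lambda_m(0)<\lambda_{m+1}(0)$, and with $\epsilon$ small enough that the intervals $(\nu-\epsilon,\nu+\epsilon)$, taken over the distinct values $\nu$ of $\lambda_0(0),\dots,\lambda_m(0)$, are pairwise disjoint, have endpoints off the spectrum and lie in $[0,\lambda_{m+1}(0))$), pick $b\notin\spec(\D_0^-\D_0^+)$ with $\lambda_m(0)<b<\lambda_{m+1}(0)$. As $b$ and the $\nu\pm\epsilon$ are not atoms of $\mu_0$, vague convergence yields $\mu_t([0,b))\to m+1$ and $\mu_t((\nu-\epsilon,\nu+\epsilon))\to\mu_0((\nu-\epsilon,\nu+\epsilon))$; since the latter counts sum to $m+1$ as well, for $t\le t_{\epsilon,m}$ all $m+1$ smallest eigenvalues of $\D_t^-\D_t^+$ lie in $[0,b)$ and are distributed among the windows with exactly the multiplicities they have for $\D_0^-\D_0^+$. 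Hence $|\lambda_j(t)-\lambda_j(0)|\le\epsilon$ for $0\le j\le m$, and passing to $\pm\sqrt{\lambda_j(t)}$ gives the stated convergence of the eigenvalues of $\D_t$.

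The derivation above has essentially no obstacle of its own: the whole analysis of the degenerating geometry is packaged into Theorem~\ref{convergentaSpectru}, whose proof already uses $\varepsilon(\mu')=-1$ (it is the source of the $-k\log2$ term), which is why the argument is short. For completeness I would also mention the direct variational proof of Colbois--Courtois \cite{curtois1,curtois2} underlying \cite{Pfaffle}: for $\limsup_t\lambda_j(t)\le\lambda_j(0)$ one transplants the lowest $m+1$ eigenspinors of $\D_0$, which decay exponentially into the cusps, into the thick part of $(M,g_t)$, cutting them off across the degenerating collar, and estimates Rayleigh quotients of $\D_t^2$; for $\liminf_t\lambda_j(t)\ge\lambda_j(0)$ one shows that normalized low eigenspinors on $(M,g_t)$ carry asymptotically no $L^2$-mass in the collar around $\eta_t$, so a subsequence converges in $L^2_{\mathrm{loc}}$ to an eigenspinor of $\D_0^2$. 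The main obstacle in that route is precisely this non-concentration statement, and it is where $\varepsilon(\mu')=-1$ is indispensable: on the collar the Dirac operator splits off the transverse Dirac operator of the shrinking circle, whose spectrum for the non-trivial spin structure is $(\ZZ+\tfrac12)\,\tfrac{2\pi}{\ell}$, bounded away from $0$ and blowing up as the circumference $\ell\to0$, and a Hardy-type estimate in the radial variable then bounds the collar mass of a $\lambda$-eigenspinor by a quantity that tends to $0$ uniformly for $\lambda$ in bounded sets.
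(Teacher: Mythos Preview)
Your argument is correct and takes a genuinely different route from the paper: there, Theorem~\ref{resultPfaffle} is not proved but simply quoted from \cite{Pfaffle}, whereas you derive it internally from Theorem~\ref{convergentaSpectru}. The chain ``heat-trace convergence for all $T>0$ $\Rightarrow$ vague convergence of spectral counting measures (Stone--Weierstrass on $C_0([0,\infty))$ with exponentials, using an $e^{-T_0\lambda}$ majorant) $\Rightarrow$ individual eigenvalue convergence (portmanteau, exploiting discreteness of the limit spectrum)'' is sound; the small adjustments you make --- enlarging $m$ to reach a spectral gap, shrinking $\epsilon$ so the windows are disjoint with endpoints off $\spec(\D_0^-\D_0^+)$ --- are harmless since proving the statement for smaller $\epsilon$ and larger $m$ implies it for the given ones, and the passage to $\pm\sqrt{\lambda_j}$ at the end only requires a further harmless shrinking of $\epsilon$ near a possible zero eigenvalue. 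What your route buys is self-containment: the eigenvalue convergence, which the paper later feeds into Theorem~\ref{logarithmicDerivativeConvergence}, becomes a consequence of the trace-formula machinery already developed, and the hypothesis $\varepsilon(\mu')=-1$ enters in exactly one place (Theorem~\ref{limitSelbergFormula}) rather than being invoked a second time through \cite{Pfaffle}. What the paper's citation buys, conversely, is generality and independence from the trace formula: the variational Colbois--Courtois/Pf\"affle argument, whose skeleton you sketch in the last paragraph, covers broader hyperbolic degenerations and does not rest on the non-compact trace formula of Theorem~\ref{selbergTraceFormula}.
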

The continuous variation of the spectrum is a delicate task. In literature there exist pseudodifferential calculi encompassing degenerations similar to our pinching limits at least on a formal level e.g. McDonald \cite{McDonald}, Mazzeo and Melrose \cite{MelroseMazzeoEta}, Albin, Rochon and Sher \cite{Federic}. Since those methods are quite far from the current paper we will not delve in this direction. Let us mention an ongoing investigation in this direction by Cipriana Anghel \cite{Cipi}. 

Back to our problem, let us prove the following lemma:
\begin{lemma}\label{uniformEigenvaluesSumBoundAtInfinity}
\begin{align*}
\lim_{p \rightarrow \infty}
\sum_{j=p+1}^{\infty} \left| \frac{1}{\xi_j^2(t)+\left( s-\tfrac{1}{2}\right) ^2}-\frac{1}{\xi_j^2(t)+\left(s_0-\tfrac{1}{2}\right) ^2} \right| 
= 0,
\end{align*}
uniformly in $t \in [0,1]$ and $s$ in a compact set in $\CC$.
\end{lemma}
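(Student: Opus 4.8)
The plan is to reduce the statement to a uniform lower bound on the eigenvalues $\xi_j^2(t)=\lambda_j(t)$ furnished by the uniform Weyl law, followed by a crude termwise estimate of the summand.

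First I would extract from Theorem \ref{uniformWeylLaw} a constant $C>0$ and an index $j_0$ with $\xi_j^2(t)\ge C^{-1}j$ for all $j\ge j_0$ and all $t\in[0,1]$. Indeed, the uniform convergence $N_t(\lambda)/\lambda\to \area(M)/2\pi$ yields $\lambda_0>0$ and $C>0$ such that $N_t(\lambda)\le C\lambda$ for every $\lambda\ge\lambda_0$ and every $t\in[0,1]$; since the eigenvalues are nonnegative and $N_t(\lambda_j(t))\ge j+1$, for any $\lambda$ with $\lambda_0\le\lambda<(j+1)/C$ one gets $N_t(\lambda)\le C\lambda<j+1\le N_t(\lambda_j(t))$, forcing $\lambda_j(t)>\lambda$; letting $\lambda\uparrow (j+1)/C$ gives $\xi_j^2(t)=\lambda_j(t)\ge C^{-1}j$ as soon as $(j+1)/C\ge\lambda_0$.

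Next, for $s$ in a fixed compact set $K\subset\CC$ and $s_0$ fixed, pick $R>0$ with $\bigl|(s-\tfrac12)^2\bigr|\le R$ and $\bigl|(s_0-\tfrac12)^2\bigr|\le R$ for all $s\in K$, and set $A:=2R$, which bounds the numerator below. Writing the summand as a single fraction,
\begin{align*}
\left| \frac{1}{\xi_j^2(t)+\left( s-\tfrac{1}{2}\right) ^2}-\frac{1}{\xi_j^2(t)+\left(s_0-\tfrac{1}{2}\right) ^2} \right|
=\frac{\left|\left(s_0-\tfrac12\right)^2-\left(s-\tfrac12\right)^2\right|}{\left|\xi_j^2(t)+\left( s-\tfrac{1}{2}\right) ^2\right|\,\left|\xi_j^2(t)+\left(s_0-\tfrac{1}{2}\right) ^2\right|},
\end{align*}
I would choose $j_1\ge j_0$ with $C^{-1}j_1\ge 2R$, so that for $j\ge j_1$ and all $t\in[0,1]$ one has $\xi_j^2(t)\ge C^{-1}j\ge 2R$, hence $\left|\xi_j^2(t)+(s-\tfrac12)^2\right|\ge \xi_j^2(t)-R\ge \tfrac12\xi_j^2(t)$ and likewise with $s_0$; in particular no denominator vanishes, since the poles $\tfrac12\pm i\xi_j(t)$ leave $K$ for $j$ large. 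Therefore, for every $p\ge j_1$,
\begin{align*}
\sum_{j=p+1}^{\infty}\left| \frac{1}{\xi_j^2(t)+\left( s-\tfrac{1}{2}\right) ^2}-\frac{1}{\xi_j^2(t)+\left(s_0-\tfrac{1}{2}\right) ^2} \right|
\le \sum_{j=p+1}^{\infty}\frac{4A}{\xi_j^2(t)^2}
\le 4AC^2\sum_{j=p+1}^{\infty}\frac{1}{j^2},
\end{align*}
and the right-hand side tends to $0$ as $p\to\infty$, independently of $t\in[0,1]$ and of $s\in K$.

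There is no essential obstacle here; the only point requiring care is that the two uniformities — in $t$ and in $s$ — are genuinely simultaneous, which is precisely why I first invoke the uniform lower bound $\xi_j^2(t)\ge C^{-1}j$ (valid for all $t$ at once) and then keep every constant depending only on $K$, on $s_0$, and on the uniform Weyl constant $C$.
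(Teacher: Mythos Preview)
Your proof is correct and follows essentially the same approach as the paper: both combine the summand into a single fraction and use the uniform Weyl law (Theorem \ref{uniformWeylLaw}) to bound the denominator from below by a constant times $j^2$, uniformly in $t$ and $s$, so that the tail is dominated by $\sum_{j>p} C/j^2$. Your argument is in fact more explicit than the paper's sketch, carefully deriving the bound $\xi_j^2(t)\ge C^{-1}j$ from the counting function and checking that the denominators cannot vanish for large $j$.
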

\begin{proof}
By direct computations one can see that:
\begin{align*}
\sum_{j=0}^{\infty} \left| \frac{1}{\xi_j^2(t)+\left( s-\tfrac{1}{2}\right) ^2}-\frac{1}{\xi_j^2(t)+\left(s_0-\tfrac{1}{2}\right) ^2} \right|
=
\sum_{j=0}^{\infty} \left| \frac{ \left( s_0-\tfrac{1}{2}\right)^2 - \left( s-\tfrac{1}{2}\right)^2}{ \left( \xi_j^2(t)+\left( s-\tfrac{1}{2}\right)^2 \right) \left( \xi_j^2(t)+\left( s_0-\tfrac{1}{2}\right)^2 \right)} \right|.
\end{align*}
Theorem $\ref{uniformWeylLaw}$, implies:
\begin{align*}
\lim_{j \rightarrow \infty} \frac{\left( \xi_j^2(t)+\left( s-\tfrac{1}{2}\right)^2 \right) \left( \xi_j^2(t)+\left( s_0-\tfrac{1}{2}\right)^2 \right)}{j^2} =  \frac{\area (M) ^2}{4\pi^2},
\end{align*}
uniformly in $t\in [0,1]$ and $s$ in a compact set. Hence there exists a constant $C$ and $j_0$ such that for $j\geq j_0$:
\[
\left| \frac{1}{\xi_j^2(t)+\left( s-\tfrac{1}{2}\right) ^2}-\frac{1}{\xi_j^2(t)+\left(s_0-\tfrac{1}{2}\right) ^2} \right|
\leq 
  \frac{C}{j^2},
\]
for any $t$ and $s$ as above.
\end{proof}
We are now able to prove the convergence of the logarithmic derivative:
\begin{theorem}\label{logarithmicDerivativeConvergence}
The family of meromorphic functions $\frac{Z'}{Z}(s,(M,g_t))$, with $s\in \CC$ and $t\in (0,1]$ converges uniformly, as $t\rightarrow 0$, to $\frac{Z'}{Z}(s,(M,g_0)) - 2\log(2)$, on compacts which avoid the poles of the limit function.
\end{theorem}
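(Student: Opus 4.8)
The plan is to start from the meromorphic representation $(\ref{derivataLogaritmicaMeromorphicExtension})$, which for $t\in(0,1]$ exhibits $\frac{Z'}{Z}(s,(M,g_t))$ as the sum of three pieces: a \emph{boundary term} $\frac{2s-1}{2s_0-1}\frac{Z'}{Z}(s_0,(M,g_t))$ with $s_0$ fixed and $\Re(s_0)>1$; a \emph{spectral sum}
\[
S_t(s):=(2s-1)\sum_{j=0}^{\infty} \left( \frac{1}{\xi_j^2(t)+\left( s-\tfrac{1}{2}\right) ^2}-\frac{1}{\xi_j^2(t)+\left(s_0-\tfrac{1}{2}\right) ^2} \right);
\]
and an \emph{integral term} $\mathcal I(s)$ which depends on $M$ only through $\area(M)$ and is therefore constant in $t$, since the area is preserved along a pinching process. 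I would pass to the limit $t\to 0$ in each of the three pieces separately, uniformly on a fixed compact $\mathcal K\subset\CC$ avoiding the poles of the limiting function, and then identify the result with $\frac{Z'}{Z}(s,(M,g_0))-2\log(2)$.

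The boundary term is immediate: as $\Re(s_0)>1$, the convergence already proved on $\{\Re(s)>1\}$ in the paragraph preceding Theorem $\ref{resultPfaffle}$ gives $\frac{Z'}{Z}(s_0,(M,g_t))\to\frac{Z'}{Z}(s_0,(M,g_0))-2\log(2)$, and multiplying by $\frac{2s-1}{2s_0-1}$, which is bounded on $\mathcal K$, makes this uniform on $\mathcal K$; the integral term needs no argument at all. The heart of the matter is $S_t(s)$, which I would split at a large index $p$. For the tail $\sum_{j>p}$, Lemma $\ref{uniformEigenvaluesSumBoundAtInfinity}$ — which itself rests on the uniform Weyl law of Theorem $\ref{uniformWeylLaw}$ — shows that, given $\delta>0$, one can choose $p$ so that the tail has absolute value $<\delta$ for \emph{all} $t\in[0,1]$ and \emph{all} $s\in\mathcal K$ at once (in particular the tail poles $\tfrac12\pm i\xi_j(t)$, $j>p$, are then far from $\mathcal K$). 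For the finite head $\sum_{0\le j\le p}$, Pf\"affle's Theorem $\ref{resultPfaffle}$ gives $\xi_j(t)\to\xi_j(0)$ as $t\to0$ for each fixed $j$ in the ordered sequence; since $\mathcal K$ avoids the poles $\tfrac12\pm i\xi_j(0)$, the denominators $\xi_j^2(0)+(s-\tfrac12)^2$ are bounded away from $0$ on $\mathcal K$, hence so are $\xi_j^2(t)+(s-\tfrac12)^2$ for $t$ small, uniformly in $s\in\mathcal K$ and $0\le j\le p$, so the head converges uniformly on $\mathcal K$ to its value at $t=0$. Combining the two estimates gives $S_t(s)\to S_0(s)$ uniformly on $\mathcal K$.

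Adding the three limits, $\lim_{t\to0}\frac{Z'}{Z}(s,(M,g_t))=\frac{2s-1}{2s_0-1}\bigl(\frac{Z'}{Z}(s_0,(M,g_0))-2\log 2\bigr)+S_0(s)-\mathcal I(s)$. To recognise the right-hand side as $\frac{Z'}{Z}(s,(M,g_0))-2\log(2)$ I would invoke formula $(\ref{derivataLogaritmicaZ})$ for the limit surface $(M,g_0)$, which carries $k=2$ cusps (one pinched geodesic producing two cusps): that identity writes $\frac{Z'}{Z}(s,(M,g_0))$ as $\frac{2s-1}{2s_0-1}\frac{Z'}{Z}(s_0,(M,g_0))+S_0(s)-\mathcal I(s)+2\log(2)\bigl(1-\frac{2s-1}{2s_0-1}\bigr)$, and subtracting $2\log(2)$ from both sides reproduces the computed limit verbatim. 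The general case of $\kappa$ pinched geodesics is identical, with $2\kappa$ cusps in the limit and shift $-2\kappa\log(2)$.

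The step I expect to be the main obstacle is the interchange of the limit $t\to0$ with the infinite spectral sum $S_t(s)$: the eigenvalues move, their multiplicities may merge or split, and near the finitely many poles of the limit function lying inside $\mathcal K$ the individual summands blow up. This is precisely what is controlled by pairing the pointwise spectral convergence of Theorem $\ref{resultPfaffle}$ with the uniform tail bound of Lemma $\ref{uniformEigenvaluesSumBoundAtInfinity}$; once those two ingredients are in place, the remaining work — the head/tail split and the bookkeeping identifying the limit — is routine.
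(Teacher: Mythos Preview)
Your proposal is correct and follows essentially the same approach as the paper: the same decomposition via $(\ref{derivataLogaritmicaMeromorphicExtension})$, the same head/tail split of the spectral sum using Lemma $\ref{uniformEigenvaluesSumBoundAtInfinity}$ for the tail and Pf\"affle's Theorem $\ref{resultPfaffle}$ for the head, and the same observation that the integral term is $t$-independent. Your final identification step, invoking $(\ref{derivataLogaritmicaZ})$ with $k=2$ on the limit surface to match the $-2\log(2)$ shift, is slightly more explicit than the paper's ``the conclusion follows'', but the argument is the same.
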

\begin{proof}
We start from formula $(\ref{derivataLogaritmicaMeromorphicExtension})$. To deal with the first term in the right-hand side, let us fix $s_0\in \CC$ with $\Re(s_0)>1$. We have noted above that for such $s_0$,
\begin{align*}
\lim_{t\rightarrow 0 } \frac{2s-1}{2s_0-1} \frac{Z'}{Z}(s_0,(M,g_t)) = \frac{2s-1}{2s_0-1} \frac{Z'}{Z}(s_0,(M,g_0)) - 2\log(2)\frac{2s-1}{2s_0-1},
\end{align*}
uniformly in $t$ and in $s$ in a compact set. From Lemma \ref{rescriereDerivataLogaritmica}, the third term does not depend on the parameter $t$ and is absolutely convergent for $s$ in a compact which does not contain the poles of $\frac{Z'}{Z}(s,(M,g_0))$. Thus, we are left with the convergence of the second term in the right-hand side of $(\ref{derivataLogaritmicaMeromorphicExtension})$. Take $\epsilon >0$ and a compact $K\subset \CC$ as required in the hypothesis. By Lemma \ref{uniformEigenvaluesSumBoundAtInfinity} there exists $p$ a positive integer such that:
\begin{align*}
\sum_{j=p+1}^{\infty}\left| \frac{1}{\xi_j^2(t)+\left( s-\tfrac{1}{2}\right) ^2}-\frac{1}{\xi_j^2(t)+\left(s_0-\tfrac{1}{2}\right) ^2} \right| <\epsilon,
\end{align*}
for every $t\in [0,1]$ and $s\in K$. Moreover, for $\frac{\epsilon}{p+1}$ and $p$, Pf\" affle's result (Theorem \ref{resultPfaffle}) implies the existence of $t_{\epsilon,p}$ such that $|\xi_j^2(t) - \xi_j^2(0)| < \frac{\epsilon}{p+1}$ for every $0\leq t \leq t_{\epsilon,p}$. Combining these two inequalities we obtain:
\begin{align*}
\sup_{s\in K} & \left| \sum_{j=0}^{\infty} \left( \frac{1}{\xi_j^2(t)+\left( s-\tfrac{1}{2}\right) ^2}-\frac{1}{\xi_j^2(t)+\left(s_0-\tfrac{1}{2}\right) ^2}- \frac{1}{\xi_j^2(0)+\left( s-\tfrac{1}{2}\right) ^2}+\frac{1}{\xi_j^2(0)+\left(s_0-\tfrac{1}{2}\right) ^2} \right)   \right| \\
& \leq 
\sup_{s\in K} \sum_{j=0}^{p} 
|\xi_j^2(0) - \xi_j^2(t)|\cdot \\
&\cdot \left| \frac{1}{
	\left( \xi_j^2(t)+
		\left( s-\tfrac{1}{2}\right) ^2 
	\right)
	\left( \xi_j^2(0)+
		\left( s-\tfrac{1}{2}\right) ^2 
	\right)} -
	\frac{1}{
	\left( \xi_j^2(t)+
		\left( s_0-\tfrac{1}{2}\right) ^2 
	\right)
	\left( \xi_j^2(0)+
		\left( s_0-\tfrac{1}{2}\right) ^2 
	\right)}
\right| \\
&+
\sup_{s\in K} \sum_{j=p+1}^{\infty}\left| \frac{1}{\xi_j^2(t)+\left( s-\tfrac{1}{2}\right) ^2}-\frac{1}{\xi_j^2(t)+\left(s_0-\tfrac{1}{2}\right) ^2} \right| \\
&+
\sup_{s\in K} \sum_{j=p+1}^{\infty}\left| \frac{1}{\xi_j^2(0)+\left( s-\tfrac{1}{2}\right) ^2}-\frac{1}{\xi_j^2(0)+\left(s_0-\tfrac{1}{2}\right) ^2} \right| \\
&\leq 
3C \epsilon,
\end{align*}
where $C$ is a constant larger than every value of the bounded expression:
\begin{align*}
\left| \frac{1}{
	\left( \xi_j^2(t)+
		\left( s-\tfrac{1}{2}\right) ^2 
	\right)
	\left( \xi_j^2(0)+
		\left( s-\tfrac{1}{2}\right) ^2 
	\right)} -
	\frac{1}{
	\left( \xi_j^2(t)+
		\left( s_0-\tfrac{1}{2}\right) ^2 
	\right)
	\left( \xi_j^2(0)+
		\left( s_0-\tfrac{1}{2}\right) ^2 
	\right)}
\right|,
\end{align*}
for all $j >0$, $s\in K$ and $t\in [0,t_{\epsilon,p}]$. Hence the third term in the right-hand side of $(\ref{derivataLogaritmicaMeromorphicExtension})$ converges uniformly, and the conclusion follows.
\end{proof}
This theorem was the last ingredient needed in order to prove our main result:
\begin{proof}[Proof of Theorem \ref{convergenceZeta}]
For simplicity, suppose that $M$ goes through a pinching process along a single, simple geodesic $\eta$. From Lemma \ref{ZetaExponentialGrowth} one can easily see that:
\begin{align*}
\lim_{t\rightarrow 0}Z(s,(M,g_t)) \exp\left( - \frac{ \pi^2}{6l_t(\eta)} \right) = Z(s,(M,g_0)) 2^{1-2s},
\end{align*}
uniformly on compacts in the half-plane $\Re(s)>1$. We want to prove this convergence on the whole complex plane. To do so, it is enough to prove it on closed disks centred in $0$, of radius at least $r>2$ (so that it intersects the plane $\Re(s)>1$) out of which we cut out $N$ disjoint open disks (of the same radius $\rho$) centred in the $N$ poles of $\frac{Z'(s,(M,g_0))}{Z(s,(M,g_0))}$ in the given closed disk. Consider $K$ such a compact and denote $W_t(s):=Z(s,(M,g_t)) \exp\left( - \frac{ \pi^2}{6l_t(\eta)} \right)$. Clearly, $W_t$ has the same logarithmic derivative as $Z$ at time $t$. Fix $s_0$ a point in $K$ with $\Re(s_0)>1$ and take $\g$ one of the shortest paths in $K$ connecting $s_0$ with an arbitrary point $s\in K$. Clearly the length of $\g$ is shorter than $2r+N\pi \rho$. Thus:
\begin{align*}
|W_t(s)| 
= 
\left| W_t(s_0) \exp\left( \int_{\g} \frac{W_t'(z)}{W_t(z)}dz \right) \right|
\leq 
|W_t(s_0) | \exp\left( (2r+N\pi \rho) \sup_{s\in K} \left| \frac{W_t'(s)}{W_t(s)}
\right|  \right).
\end{align*}
Since $\frac{W_t'(s)}{W_t(s)}$ converges to a function which is holomorphic on $K$, for $t$ small enough, $W_t$ is bounded. With this in mind, let us consider the following Ansatz:
\begin{align*}
\left( \frac{W_t(s)}{Z(s,(M,g_0))2^{1-2s}} \right)'
=
\frac{W_t(s)}{Z(s,(M,g_0))2^{1-2s}} \left( \frac{W_t'(s)}{W_t(s)} - \frac{Z'(s,(M,g_0))}{Z(s,(M,g_0))} + 2\log(2) \right).
\end{align*}
The poles of $\frac{Z'(s,(M,g_0))}{Z(s,(M,g_0))}$ are the zeroes of $Z(s,(M,g_0))$ and since $K$ is far away from these poles, we deduce
that the right hand side in the above equality converges to $0$ uniformly as $t$ goes to $0$. Moreover, 
\begin{align*}
\lim_{t\rightarrow 0 } \frac{W_t(s_0)}{Z(s_0,(M,g_0))2^{1-2s_0}}  = 1,
\end{align*}
therefore $W_t$ converges to $Z(s,(M,g_0))$ uniformly on $K$. Now we have to deal with the poles. Consider $w_0$ such a point and $B_{w_0}(\delta)$ a disk around it. Then:
\begin{align*}
\lim_{t\rightarrow 0 } \int_{\partial B_{w_0}(\delta)} \frac{W_t'(z)}{W_t(z)}dz
=
\int_{\partial B_{w_0}(\delta)} \frac{Z'(s,(M,g_0))}{Z(s,(M,g_0))}dz
=
2\pi i \Res\left( \frac{Z'(s,(M,g_0))}{Z(s,(M,g_0))}\right)
\neq
0.
\end{align*}
Therefore, there exists $t_{w_0}>0$ such that $W_t$ has the same number of zeroes in $B_{w_0}(\delta)$ as $Z(s,(M,g_0))$ for $0\leq t \leq t_{w_0}$. Since $\delta$ can be taken arbitrarily small we get that $\lim_{t\rightarrow 0}W_t(w_0) = 0$.
\end{proof}


\begin{thebibliography}{99}

\bibitem{Federic}
P.~Albin, F.~Rochon, D.~Sher, \emph{Resolvent, heat kernel and torsion under degeneration to fibered cusps}, Mem. Amer. Math. Soc. \textbf{269} (2021), no. 1314.

\bibitem{Cipi}
Cipriana Anghel, \emph{Resolvents of cusp-surgery fully elliptic differential operators}, In preparation.
\bibitem{Ammann}
B.~Ammann, C.~B\" ar, \emph{The Dirac operator on manifolds and collapsing circle bundles}, Ann. Glob. Anal. Geom. \textbf{16} (1998), no. 3, 229--234.

\bibitem{Bar}
C.~B\" ar, \emph{The Dirac operator on hyperbolic manifolds of finite volume}, J. Differential Geom. \textbf{54} (2000), no. 3, 439--488.

\bibitem{Bar2}
C.~B\" ar, \emph{The Dirac operator on space forms of positive curvature}, J. Math. Soc. Japan \textbf{48} (1996), 69--83.

\bibitem{BolteStiepanSelbergForDirac}
J.~Bolte, H.~M.~Stiepan, \emph{The Selberg trace formula for Dirac operators}, J. Math. Phys. \textbf{47} (2006), no. 11.

\bibitem{BolteSteinerDeterminantsOfLaplaceOperators}
J.~Bolte, F.~Steiner, \emph{Determinants of Laplace-like operators on Riemann surfaces}, Comm. Math. Phys. \textbf{130} (1990), no. 3, 581--597.

\bibitem{Buser}
P.~Buser, \emph{Geometry and Spectra of Compact Riemann Surfaces}, Reprint of the 1992 edition. Modern Birkhäuser Classics. Birkhäuser Boston, Ltd., Boston, MA, 2010.

\bibitem{curtois1}
B.~Colbois, G.~Courtois, \emph{Les valeurs propres inf´erieures `a 1/4 des surfaces de
Riemann de petit rayon d’injectivit´e}, Comment. Math. Helv. \textbf{64}, 349--362, 1989.

\bibitem{curtois2}
B.~Colbois, G.~Courtois, \emph{Convergence de vari´et´es et convergence du spectre du
laplacien}, Ann. Sci. Ec. Norm. Sup. \textbf{24}, 507--518, 1991.

\bibitem{carteMoroianuSpinori}
J.~P.~Bourguignon, O.~Hijazi, J.~L.~Milhorat, A.~Moroianu, S.~Moroianu, \emph{A Spinorial Approach to Riemannian and Conformal Geometry}, EMS Monographs in Mathematics. European Mathematical Society (EMS), Zürich, 2015.

\bibitem{dhockerPhong}
E.~D'Hoker, D.~H.~Phong, \emph{On determinants of Laplacians on Riemann surfaces}, Comm. Math. Phys. \textbf{104} (1986), no. 4, 537--545.

\bibitem{Hoffmann}
W.~Hoffmann, \emph{An invariant trace formula for the universal covering group of $\slinear(2,\RR)$}, Ann Glob Anal Geom 12 (1994), 19–-63.

\bibitem{ivrii}
V.~Y.~Ivrii, \emph{The second term of the spectral asymptotics for a Laplace-Beltrami operator on manifolds with boundary}, Funktsional. Anal. i Prilozhen. \textbf{14} (1980), no. 2, 25--34.

\bibitem{lizhen}
Lizhen Ji, \emph{Spectral degeneration of hyperbolic Riemann surfaces}, J. Differential Geom. \textbf{38} (1993), no. 2, 263--313.

\bibitem{MarklofSelbergTraceFormulaIntroduction}
J.~Marklof, \emph{Selberg's Trace Formula: An Introduction}. Hyperbolic geometry and applications in quantum chaos and cosmology, 83–119, London Math. Soc. Lecture Note Ser., 397, Cambridge Univ. Press, Cambridge, 2012.

\bibitem{MelroseMazzeoEta}
R.~Mazzeo, R.~B.~Melrose, \emph{Analytic surgery and the eta invariant}, Geom. Funct. Anal. \textbf{5} (1995), no. 1, 14--75.

\bibitem{MelrsoseMazzeo}
R.~Mazzeo, R.~B.~Melrose, \emph{Pseudodifferential operators on manifolds with fibred boundaries}, Asian J. Math. \textbf{2} (1998), no. 4, 833--866.

\bibitem{McDonald}
P.~McDonald, \emph{The Laplacian on spaces with cone-like singularities}, MIT Thesis, 1990.

\bibitem{Pleijel}
S.~Minakshisundaram, A.~Pleijel, \emph{Some properties of the
eigenfunctions of the Laplace operator on Riemannian manifolds},
Canad. J. Math. \textbf{1} (1949), 242--256.

\bibitem{Moroianu}
S.~Moroianu, \emph{Weyl laws on open manifolds}, Math. Ann. \textbf{340} (2008), no. 1, 1--21.

\bibitem{Pfaffle}
F.~Pf\" affle, \emph{Eigenvalues of Dirac operators for hyperbolic degenerations}, Manuscr. Math. \textbf{116} (2005), no. 1, 1--29.


\bibitem{boundGeodesicsRandon}
B.~Randol, \emph{On the asymptotic distribution of closed geodesics on compact Riemann surfaces}, Trans. Amer. Math. Soc. \textbf{233} (1977), 241--247.

\bibitem{SarnkDeterminantsOfLaplacians}
P.~Sarnak, \emph{Determinants of Laplacians},  Comm. Math. Phys. \textbf{110} (1987), no. 1, 113--120.

\bibitem{SchulzeZetaLaplacian}
M.~Schulze,
\emph{On the resolvent of the Laplacian on functions for degenerating surfaces of finite geometry},
J. Funct. Anal. \textbf{236} (2006), no. 1, 120--160.

\bibitem{SelbergOriginal}
A.~Selberg, \emph{Harmonic analysis and discontinuous groups in weakly symmetric Riemannian spaces with applications to Dirichlet series}, J. Indian Math. Soc. (N.S.) \textbf{20} (1956), 47--87.

\bibitem{Warner}
G.~Warner, \emph{Selberg's trace formula for nonuniform lattices: the {$R$}-rank one case}, Studies in algebra and number theory, Adv. Math. Suppl. Stud., Academic Press, 1--142, New York-London, 1979.

\bibitem{Wolpert}
S.~A.~Wolpert, \emph{Asymptotics of the spectrum and the Selberg zeta function on the space of Riemann surfaces}, Comm. Math. Phys. \textbf{112} (1987), no. 2, 283--315.

\end{thebibliography}
\end{document}